

\documentclass[11pt]{article}
\usepackage[dvipsnames]{xcolor}
\usepackage{amsfonts,latexsym,amssymb,amsthm,amsmath,graphicx,cases,comment}
\usepackage{paralist}
\usepackage{graphics} 
\usepackage{epsfig} 
\usepackage{epstopdf}

\usepackage[titletoc,title]{appendix}
\usepackage{empheq}
\usepackage{cancel}
\usepackage{tikz}
\usetikzlibrary{arrows,shapes}
\usetikzlibrary{decorations.pathmorphing,decorations.pathreplacing}
\usetikzlibrary{calc,patterns,angles,quotes}

\usepackage{float}

\usepackage[colorlinks=true]{hyperref}
\hypersetup{urlcolor=blue, citecolor=red}

\setlength{\topmargin}{0in}
\setlength{\textheight}{8.5in}
\setlength{\oddsidemargin}{-.1in}
\setlength{\textwidth}{6.7in}



\newtheorem{theorem}{Theorem}[section]
\newtheorem{thm}{Theorem}

\newtheorem{lemma}[thm]{Lemma}
\newtheorem{prop}[thm]{Proposition}
\newtheorem{cor}[thm]{Corollary}

\theoremstyle{definition}

\newtheorem{rmk}{Remark}

\newcommand{\be}{\begin{equation}}
\newcommand{\ee}{\end{equation}}
\newcommand{\bsubeq}{\begin{subequations}}
	\newcommand{\esubeq}{\end{subequations}}

\newcommand{\D}[1]{\mathcal{D}(#1)}
\newcommand{\Ls}{L^2(\Omega)}
\newcommand{\p}{A^{1/2}}
\newcommand{\A}{\mathcal{A}}
\newcommand{\E}{\mathcal{E}}
\newcommand{\Ha}{\mathbb{H}_0}
\newcommand{\Hb}{\mathbb{H}_1}
\newcommand{\Hc}{\mathbb{H}_2}
\newcommand{\Han}{_{\tau,0}}
\newcommand{\Hbn}{_{\tau,1}}
\newcommand{\Hcn}{_{\tau,2}}
\newcommand{\cF}{\mathcal{F}}

\newcommand{\bpm}{\begin{pmatrix}}
	\newcommand{\epm}{\end{pmatrix}}

\newcommand{\bbm}{\begin{bmatrix}}
	\newcommand{\ebm}{\end{bmatrix}}

\numberwithin{equation}{section}
\numberwithin{thm}{section}
\numberwithin{rmk}{section}

\newtheorem{clr}[thm]{Corollary}

\allowdisplaybreaks
\newcommand{\thistheoremname}{}

\newtheorem*{genericthm*}{\thistheoremname}
\newenvironment{namedthm*}[1]
{\renewcommand{\thistheoremname}{#1}%
	\begin{genericthm*}}
	{\end{genericthm*}}

\begin{document}
\title{Singular thermal relaxation limit for the Moore-Gibson-Thompson equation arising in  propagation of acoustic waves.}
	\author{
		 Marcelo Bongarti, Sutthirut Charoenphon and Irena Lasiecka\\
		{\small Department of Mathematical Sciences}\\
		{\small University of Memphis}\\
		{\small Memphis, TN 38152 USA}\\
	 {\small IBS, Polish Academy of Sciences, Warsaw.}
}

	\date{}
	\maketitle

	\begin{abstract}
	Moore-Gibson-Thompson (MGT) equations,  which describe acoustic waves in a  heterogeneous  medium, are considered. These are the  third order in time  evolutions of a predominantly hyperbolic type. MGT models account for a  finite speed propagation due to the appearance of  thermal relaxation  coefficient $\tau > 0 $ in front of the third order time derivative. Since the values of $\tau$ are relatively  small and often negligible, it is important to understand  the asymptotic behavior and characteristics  of the model when $\tau \rightarrow 0 $. This is a particularly delicate issue since the $\tau-$ dynamics is governed by a generator which is singular as $\tau\rightarrow 0.$ It turns out that the limit dynamics corresponds to the linearized Westervelt equation which is of a parabolic type. 
	In this paper, we provide a rigorous analysis of the asymptotics which includes strong convergence of the corresponding evolutions over infinite horizon.  This is obtained by studying convergence rates along with the uniform exponential stability of the third order evolutions. Spectral analysis  for the MGT-equation  along with a  discussion   of spectral  uppersemicontinuity  for  both equations  (MGT and  linearized Westervelt)  will also be provided.
	\end{abstract}

\textbf{Keywords:} Moore-Gibson-Thompson equation; third-order evolutions, singular limit; strong convergence of semigroup; uniform exponential decays; acoustic waves; spectral analysis.
	
	\section{Introduction}
	\hspace{.5cm} In this paper, we  consider PDE system describing the propagation of acoustic waves in a heterogeneous medium. The corresponding models,  referred as Westervelt, Kuznetsov or Moore-Gibson-Thompson equation (MGT), have attracted considerable attention triggered  by important applications in medicine, engineering and life sciences (see \cite{coulo,crig,ham,jordan2,kalt,mg1}).
	Processes such as welding, lithotripsy or high frequency focused  ultrasound depend on accurate modeling involving acoustic equations. 
	From a mathematical point of view, these are either second-order-in -time equations with strong diffusion or third-order-in-time  dynamics. While in the first case the equation is of strongly parabolic type (diffusive effects are dominant),  in the second case the system displays  partial hyperbolic effect which can be easily attested by spectral analysis. 
	From a physical point of view, the difference between two types manifests itself by accounting for finite speed of propagation  for the MGT equation vs infinite speed of propagation for diffusive phenomena.  By accounting for thermal  relaxation in the process,  MGT equation resolves infinite speed of propagation paradox associated with Westervelt-Kuznetsov equation.  The goal of this work is a careful asymptotic analysis  of MGT equation with respect to vanishing relaxation parameter. We will show that the Westervelt-Kuznetsov equation is a limit (in  terms of projected semigroups)
of MGT equation, when the relaxation parameter vanishes. A quantitative  rate of convergence of the corresponding solutions will be derived as well.

	\subsection{Physical motivation, modeling and thermal relaxation parameter}
	
	\hspace{.5cm} Physical models for nonlinear acoustics depend on what constitutive law we choose to describe the dynamics of the heat conduction. 
	According to Fourier (classical continuum mechanics), the dynamics of the thermal flux in a homogeneous and isotropic thermally conducting medium obeys the relation \begin{equation}\overrightarrow{q} = -K\nabla \theta, \label{FL}\end{equation} where $\overrightarrow{q}$ is the flux vector, $\theta = \theta(t,x)$ is the absolute temperature and the constant $K > 0$ is  the thermal conductivity, see \cite{HCP} for more details.
		
		Along with the conservation of mass, momentum and energy, the use of Fourier's law for the heat flux lead us to obtain a number of known equations among which we find the classic second order (in time) nonlinear Westervelt's equation for the acoustic pressure $u=u(t,x)$ which can be written as \begin{equation}\label{WE}
		(1-2ku)u_{tt} -c^2\Delta u - \delta\Delta u_t = 2k(u_t)^2,
		\end{equation} where $k$ is a parameter that depends on the mass density and the constant of the nonlinearity of the medium and $c$ and $\delta$ denote the speed and diffusivity of the sound, respectively. There  are many references addressing various modeling aspects. Within the context of this paper, we refer
		to \cite{HCP, jordan2, kalt, WE} and references therein. 
		

Unfortunately, Fourier's law does not fully describe the heat diffusion process. Physically, Fourier's law predicts the propagation of the thermal signals at infinite speed, which is unrealistic (see \cite{ap2}). Mathematically, the so-called \emph{paradox of infinite speed of propagation} (or \emph{paradox of heat conduction}, in physics) intuitively means that initial data has an instantaneous effect on the entire space. Quantitatively we translate this notion in terms of support. 

In order to make the notion clear, consider the linearized  homogeneous Westervelt's equation \begin{equation}\label{LWE}
		\alpha u_{tt} -c^2\Delta u - \delta\Delta u_t = 0,
		\end{equation}  ($\alpha$  being a real constant) with initial conditions $u(0,x) = u_0(x)$ and $u_t(0,x) = u_1(x)$. We can simply assume, for the time being, that $x \in \mathbb{R}^n$.
		
		Suppose that $\mbox{supp}(u_0) \cup \mbox{supp}(u_1) \subset B(z,R)$, that is, $u_0$ and $u_1$ have supports inside the ball of radius $R$ and center $z \in \mathbb{R}^n.$ We say that the Partial Differential Equation (PDE) above has \emph{finite speed of propagation} if the solution $u$ is such that $u(t,\cdot)$ has compact support for every $t>0$. More precisely we call \emph{speed of propagation} the number $C$ defined as the infimum of the values $c>0$ such that $\mbox{supp}(u(t, \cdot)) \subset B(z,R+ct).$  If there is no finite $c$ with the above property, we say that the PDE has {infinite speed of propagation}.
		
		One can fairly easily see why Fourier's law leads to infinite speed of propagation. In general lines, neglecting internal dissipation and all sort of thermal sources, the authors of \cite{mech,HCP} used the equation of continuity \begin{equation*}
		\dfrac{\partial \rho}{\partial t} + \nabla \cdot (\rho \overrightarrow{u}) = 0,
		\end{equation*} (here $\overrightarrow{u}$ is the velocity vector of the material point, $t$ is time and $\rho$ is the mass density) to write the balance law for internal heat energy as  \begin{equation}\label{BEq}
		\rho C_p \dfrac{D\theta}{Dt}+\nabla \cdot \overrightarrow{q} = 0,
		\end{equation} where $C_p$ is the specific heat at constant pressure and the operator $$\dfrac{D}{Dt} \equiv \frac{\partial}{\partial t} + \overrightarrow{u} \cdot \nabla$$ represents the material derivative.
		
		Therefore, by simply replacing the flux vector in \eqref{BEq} by the Fourier's law and using the definition of the material derivative we end up with what is called \emph{heat transport equation} \begin{equation}\label{HTE}
		\theta_t + \overrightarrow{u} \cdot \nabla \theta - \eta\nabla^2 \theta = 0,
		\end{equation} where $\eta = \frac{K}{\rho C_p}$ is the constant of thermal diffusivity. Assuming for one moment that the material point does not move (i.e., $\overrightarrow{u} = 0$) the heat transport equation \eqref{HTE} reduces to the classical diffusion equation which is a PDE with parabolic behavior. The solution for the heat equation, as we know, is given by a convolution $u = \phi \star u_0$ where $\phi$ is the fundamental solution of the Laplace equation and $u_0$ is the initial data. From this structure, we can see that small disturbance on the initial data has the potential to affect the whole solution in the entire space.
		
		In order to address this \emph{defect} and account for  a  finite speed to the heat conduction, several improvements and modifications to the Fourier's law were studied (see \cite{ap3}). Although different, all the modifications agree with the fact that it is unrealistic to consider that any change of temperature is immediatly felt regardless of position.
		It is interesting to note that  the first work to notice this phenomena with a  derivation of  a new  third order in time model is \cite{stokes} by Professor G. G. Stokes.
		 After 97 years, in \cite{ap5}, C. Cattaneo derived what today is known as the Maxwell-Cattaneo law (see also \cite{ap2,ap3,ap4}). 
	
		The Maxwell-Cattaneo Law is given by \begin{equation} \label{MC}
		\overrightarrow{q} + \tau \dfrac{\partial \overrightarrow{q}}{\partial t} = -K\nabla \theta,
		\end{equation} and managed to remove the infinite speed paradox by adding the so called \emph{thermal inertia} term which is proportional to the time derivative of the flux vector. 
		
		It is important to observe that Maxwell-Cattaneo law as we presented in \eqref{MC} resolves the paradox of infinite speed of propagation, but the diffusion process is only free of paradoxes in the case where the body (or the object) of the dynamic is resting. In the moving frame, this same constitutive law gives rise to another paradox related to the Galilean relativity regarding the invariance of physical laws in all frames. This latter and last paradox can be resolved by replacing the time derivative in \eqref{MC} by the material derivative. More details about this issue can be found in \cite{HCP}.
		
		The material-dependent constant $\tau$ is known as the \emph{thermal relaxation parameter} or \emph{time relaxation parameter} and is the center of this paper. Physically $\tau$ represents the time necessary to achieve steady heat conduction once a temperature gradient is imposed to a volume element. This time lag can be (and in fact it is) translated to different phenomena and contexts, as is the case where the models are used to study problems of High-Frequency Ultrasound (HFU) in lithotripsy, thermotherapy, ultrasound cleaning and sonochemistry. See \cite{jmgt,kalt,HCP}.
		
	The goal of this paper is  to \emph{quantify} the sensitivity the thermal relaxation parameter $\tau$ on a variety of materials by studying a singular perturbation problem, which makes sense since a number of experiments found this parameter to be small in several mediums, although not all. Among the ones where $\tau$ is not small we find biological tissue (1-100 seconds), sand (21 seconds), H acid (25 seconds) and NaHCO$_3$ (29 seconds) (see \cite{HCP}). Among the ones with $\tau$ small we find cells and melanosome (order of milliseconds), blood vessels (order of microseconds, depending on the diameter) (see \cite{RelTime1}) and most metals (order of picoseconds) (see \cite{HCP}).
		
		The same procedure as to obtain the Westervelt's equation leads us now to the third-order (in time) nonlinear Moore-Gibson-Thompson (MGT) equation \begin{equation}\label{JMGT}
		\tau u_{ttt} + (1-2ku)u_{tt} - c^2\Delta u - b \Delta u_t = 2k(u_t)^2,
		\end{equation} where $k$ and $c$ has the same meaning as the ones in the Westervelt's equation but the diffusivity of the sound $\delta$ also suffers a change due to the presence of the thermal relaxation parameter $\tau$ and gives place to a new parameter $b = \delta + \tau c^2.$ The operator $\Delta$ is understood as the Laplacian subject to suitable boundary conditions-Dirichlet, Neumann or Robin. It should be noted that the original version of this model dates back to Stokes paper \cite{stokes}.
	A typical JMGT equation is equipped with the  additional more precise  physical parameters \cite{jmgt}, however, for the sake of transparency  only  the canonical abstract form is retained.
		
		 \subsection{Past literature and introduction of the problem}
		 	
		 	\hspace{.5cm} This section  collects the  relevant past   results pertinent to the model  under consideration.
		 	
		Let $\Omega$ be a bounded domain in $\mathbb{R}^n$ ($n=2,3$) with a $C^2-$boundary $\Gamma = \partial \Omega$ immersed in a resting medium. We work with $\Ls$ but the treatment could be similarly carried out to any (separable)  Hilbert space $H$.  Consider $A: \D{A} \subset \Ls \to \Ls$ defined as the Dirichlet Laplacian, i.e., $A = -\Delta$ with $\D{A} = H_0^1(\Omega)  \cap H^2(\Omega)$. All the results remain true if we assume $A$ to be any unbounded positive self-adjoint operator with compact resolvent  defined on $H$. 
		 	
		 	Consider the nonlinear third order evolution  \begin{equation}\label{MGTnl}\begin{cases}
		 	\tau u_{ttt}+(1+2ku)u_{tt} + c^2 A u + b A u_t = 0, \\
		 	u(0,\cdot) = u_0, u_t(0,\cdot) = u_1, u_{tt}(0,\cdot) = u_2,
		 	\end{cases}
		 	\end{equation} and its  linearization 
		 	\begin{equation}\label{MGTl}\begin{cases}
		 	\tau u_{ttt}+\alpha u_{tt} + c^2 A u + b A u_t = 0, \\
		 	u(0,\cdot) = u_0, u_t(0,\cdot) = u_1, u_{tt}(0,\cdot) = u_2.
		 	\end{cases}
		 	\end{equation}
			The natural phase spaces associated with these evolutions are  the following: 
			\begin{eqnarray}
			\Ha \equiv \mathcal{D}(A^{1/2}) \times \mathcal{D}(A^{1/2}) \times \Ls  ~~and ~~\Hb \equiv \mathcal{D}(A) \times \mathcal{D}(A^{1/2}) \times \Ls. \end{eqnarray}

		 	Generation of linear semigroups associated with (\ref{MGTl})  has been studied in \cite{mgtp1,TrigMGT}  where it was shown that for any $\tau >0, b > 0$  (\ref{MGTl}) generates a strongly continuous  group on either $\Ha$ or $\Hb$. This result depends on $b > 0$. When $b =0$  the generation of semigroups fails \cite{fatto}.
			
			The nonlinear (quasilinear)  model (\ref{JMGT}) has been treated in  \cite{jmgt} where it was shown that for the  initial data sufficiently small  in $\Hb$, i.e., in  a ball $B_{\Hb}(r) $  there  exists  nonlinear semigroup   operator defined on $\Hb$ for all $ t > 0$.  The value of $r$ depends only on the values of the physical  parameters in the equation and not on $t > 0 $. The aforementioned result depends on  uniform stability of the dynamics of \eqref{MGTl} and this holds for $\gamma = \alpha - \tau c^2 b^{-1} > 0 $. 
			
			 
			  Subsequently,  the authors in \cite{TrigMGT} showed that the linear equation generates a $C_0-$group in four different spaces with exponential stability 
			   provided $\gamma = \alpha - \tau c^2 b^{-1} > 0.$ In case $\gamma = 0$ the system is conservative and in case $\gamma < 0$, by assuming very regular energy spaces the authors in \cite{liz1} showed that \eqref{MGTl} generates a chaotic semigroup.
		 	
		 	Spectral analysis  of the linear problem was also studied \cite{TrigMGT,jmgt,mgtp1}.   The spectrum consists of continuous spectrum and point spectrum. 
			The location of the eigenvalues  confirms partially  hyperbolic character of the dynamics. 
		 	
		 	The same  model with added  memory,  where the latter accounts for molecular relaxation, was considered in \cite{mem1,mem2,mem3,mem4,mem5} for linear case and in \cite{mem6} for the nonlinear case.
		 	
		 	
		 	All the results obtained and mentioned above pertain to the situation when $\tau > \tau_0 > 0$. Since the parameter $\tau$ in many applications  is typically very small, it is essential  to understand the effects of diminishing values of relaxation parameter on quantitative properties of the underlined dynamics. This will provide important  information 
			on sensitivity of the model with respect to time relaxation. 
			The goal of this paper is precisely to consider the vanishing parameter $\tau \rightarrow 0 $ and its consequences on the  resulting evolution. 
			Specific  questions we ask are the following: 
			\begin{itemize}
			\item[]
			${\bullet}$ Convergence of semigroups with respect to vanishing relaxation parameter $\tau \geq 0 $. \\
			$\bullet$ Uniform (with respect to  $\tau > 0$) asymptotic  stability  properties of  the  ``relaxed" groups.\\
			$\bullet$ Asymptotic (in $\tau$) behavior of the spectrum for the family of  the generators.
			\end{itemize}

			To our best knowledge,  this is the first work that takes into consideration  asymptotic properties of the MGT  dynamics  with respect to the vanishing relaxation parameter. 
			The limiting evolution  changes the  character from a  hyperbolic group  to a  parabolic semigroup. This change  is expected to be reflected by the  asymptotic properties of the spectrum and quantitative  estimates  for the corresponding evolutions. It should also be noted that the problem under consideration does not fit the usual Trotter-Kato type of the framework. This is due to the fact that the limit problem corresponds formally to degenerated structure. Thus, convergence of the resolvents (condition required by Trotter Kato framework)  does not have a  natural interpretation.  
		 	
	\section{Main Results}
	
	\subsection{Convergence of  the  projected semigroup solutions}
	
	\hspace{.5cm} As before, let $\Omega$ be a bounded domain in $\mathbb{R}^n$ ($n=2,3$) with a $C^2-$boundary $\Gamma = \partial \Omega$ immersed in a resting medium and $A: \D{A} \subset \Ls \to \Ls$ defined as the Dirichlet Laplacian, i.e., $A = -\Delta$ with $\D{A} = H_0^1(\Omega) \cap H^2(\Omega)$. 
	
	Let $T > 0$.  We consider a  family of ``hyperbolic" abstract third order  problems
	 \begin{equation}\label{MGTtau}\begin{cases}
	 \tau u^\tau_{ttt}+\alpha u^\tau_{tt} + c^2 A u^\tau + b^\tau A u^\tau_t = 0,  \ t > 0, \\
	 u^\tau(0,\cdot) = u_0, u^{\tau}_t(0,\cdot) = u_1, u^{\tau}_{tt}(0,\cdot) = u_2,
	 \end{cases}
	 \end{equation} where $b^\tau = \delta + \tau c^2$ and $\alpha, c, \delta,\tau >0.$
	 
	 We rewrite \eqref{MGTtau} abstractly by using a mass operator $M_\tau$ as below:  \begin{equation}\begin{cases}M_\tau U^\tau_t(t) = \mathcal{A}_0^\tau U^\tau(t), \ t>0, \\ U^\tau(0)=U_0=(u_0,u_1,u_2)^T, \end{cases}\label{AbS}\end{equation}
	 or equivalently with $\A^{\tau} = M_{\tau}^{-1} \A_0^{\tau} $ 
	  \begin{equation}\begin{cases} U^\tau_t(t) = \mathcal{A}^\tau U^\tau(t), \ t>0, \\ U^\tau(0)=U_0=(u_0,u_1,u_2)^T, \end{cases}\label{AbS}\end{equation}

	  where \begin{equation}U^\tau \equiv \left(\begin{array}{c}
	 u^\tau \\
	 u^\tau_t\\
	u^\tau_{tt}
	 \end{array}\right); \ \mathcal{A}^\tau\equiv \left(\begin{array}{ccc}
	 0  & 1 & 0 \\
	 0 & 0 & 1 \\
	 - \tau^{-1} c^2A & -\tau^{-1}b^\tau A & -\tau^{-1}\alpha 
	 \end{array}\right); \ M_{\tau}\equiv \left(\begin{array}{ccc}
	 1  & 0 & 0 \\
	 0 & 1 & 0 \\
	 0 & 0 & \tau
	 \end{array}\right)\label{op1}.\end{equation}
	 The evolution described in (\ref{AbS}) can be considered on several  product spaces with the results depending on the space and the domain where $\A^{\tau} $ is defined.
	 \begin{rmk}
	 	The generator $\A^\tau$ ``blows up'' when $\tau\rightarrow 0.$
	 \end{rmk}
	 
	 The following three spaces are important for the development for our result. We define $\mathbb{H}_0, \mathbb{H}_1, \mathbb{H}_2$ as $$\mathbb{H}_0 \equiv \D{\p} \times \D{\p} \times L^2(\Omega);$$ $$\mathbb{H}_1 \equiv \D{A} \times \D{\p} \times L^2(\Omega);$$ $$\mathbb{H}_2 \equiv \D{A} \times \D{A} \times \D{\p}.$$
	 
	 The operators $\A^{\tau}$ are considered on each of these spaces with natural domains induced by the given  topology. 
	  For instance, $\A^{\tau} : \mathcal{D}(\A^{\tau} ) \subset \Ha  \rightarrow \Ha $ has the domain defined by 
	  $$\mathcal{D} (\A^{\tau}) = \{ (u,v,w) \in \Ha; c^2u + b^{\tau} v \in \mathcal{D}(A) \}. $$
	  Clearly, the domains are not compact in $\Ha$. Analogous setups are made for $\Hb$ and $\Hc$.
	  
	  For each $\tau >0$,  we will consider  weighted norms defined by the means of the mass operator  $M_{\tau} $.
	 $$|| M_{\tau}^{1/2} U||^2_{\Ha},  || M_{\tau}^{1/2} U||^2_{\Hb},||M_{\tau}^{1/2} U||^2_{\Hc},$$ that is, $$\|(u,v,w)\|\Han^2 = \|u\|_{\D{\p}}^2 + \|v\|_{\D{\p}}^2 + \tau\|w\|_{2}^2 = ||M_{\tau}^{1/2} U ||^2_{\Ha};$$
	 $$\|(u,v,w)\|\Hbn^2 = \|u\|_{\D{A}}^2 + \|v\|_{\D{\p}}^2 + \tau\|w\|_{2}^2 = ||M_{\tau}^{1/2}U||^2_{\Hb};$$
	 $$\|(u,v,w)\|\Hcn^2 = \|u\|_{\D{A}}^2 + \|v\|_{\D{A}}^2 + \tau\|w\|_{\D{\p}}^2 = ||M_{\tau}^{1/2} U||^2_{\Hc}$$ with $\|\cdot\|_2$ representing the standard $L^2-$norm.
	  We shall also use the rescaled  notation: $\Ha^{\tau}  = M_{\tau}^{1/2} \Ha$,  $ \Hb^{\tau}  = M_{\tau}^{1/2} \Hb$, $\Hc^{\tau}  = M_{\tau}^{1/2} \Hc$ with an obvious interpretation for the composition where  the elements of $\Ha^{\tau} $ coincide with the elements of $\Ha$ and induced topology given by  $\|(u,v,w)\|\Han$.

	 \begin{theorem}\label{wp-apb} \emph{(}{\bf Generation of a group  on $\Ha$ and $\Hc$}\emph{).} Let
	  $\alpha, c, \delta > 0.$  Then, for each $\tau >0 $ the operator $\mathcal{A}^\tau$ generates a $C_0-$group $\{T^\tau(t)\}_{t \geqslant 0}$ on $\mathbb{H}_0$ and also on $\Hc$.
		\end{theorem}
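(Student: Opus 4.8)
The plan is to establish that $\mathcal{A}^\tau$ generates a $C_0$-group by showing that both $\mathcal{A}^\tau$ and $-\mathcal{A}^\tau$ generate $C_0$-semigroups, which is the standard characterization of group generators. For each of these two operators I would apply the Lumer--Phillips theorem, so there are two things to verify: quasi-dissipativity with respect to a suitable energy inner product, and a range (maximality) condition. I would run the whole argument on $\Ha$ and then deduce the statement on $\Hc$ for free: since every entry of $\mathcal{A}^\tau$ is a constant or a constant multiple of $A$, the map $U\mapsto A^{1/2}U$ is an isometric isomorphism $\Hc\to\Ha$ that intertwines the two realizations of $\mathcal{A}^\tau$, so generation on $\Hc$ is equivalent to generation on $\Ha$.

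The main obstacle is dissipativity, because the naive weighted product norm $\|\cdot\|\Han$ does not close: pairing the second component $w=u_{tt}$ of $\mathcal{A}^\tau U$ against $v$ in $\D{A^{1/2}}$ produces the top-order term $\langle A^{1/2}w,A^{1/2}v\rangle$, which is not controlled by $\|U\|\Han$. To get around this I would use the MGT energy built from two multiplier identities. Multiplying \eqref{MGTtau} by $u_{tt}$ yields $\frac{d}{dt}\mathcal{E}_0 = -\alpha\|u_{tt}\|^2 + c^2\|A^{1/2}u_t\|^2$, where $\mathcal{E}_0 = \frac{\tau}{2}\|u_{tt}\|^2 + \frac{b^\tau}{2}\|A^{1/2}u_t\|^2 + c^2\langle A^{1/2}u,A^{1/2}u_t\rangle$; the crucial point is that every term containing $A^{1/2}u_{tt}$ cancels. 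Multiplying by $u_t$ yields $\frac{d}{dt}\mathcal{E}_1 = \tau\|u_{tt}\|^2 - b^\tau\|A^{1/2}u_t\|^2$ with $\mathcal{E}_1 = \tau\langle u_{tt},u_t\rangle + \frac{\alpha}{2}\|u_t\|^2 + \frac{c^2}{2}\|A^{1/2}u\|^2$, which supplies the missing control of $\|A^{1/2}u\|^2$. For $\mu\in(c^2/b^\tau,\ \alpha/\tau)$ the combination $\mathcal{E}:=\mathcal{E}_0+\mu\mathcal{E}_1$ is positive definite and equivalent to $\|\cdot\|\Han^2$ (the two $2\times2$ blocks in $(A^{1/2}u,A^{1/2}u_t)$ and in $(u_{tt},u_t)$ are positive definite exactly under $\mu>c^2/b^\tau$ and $\mu<\alpha/\tau$, respectively), while $\frac{d}{dt}\mathcal{E} = (\mu\tau-\alpha)\|u_{tt}\|^2 + (c^2-\mu b^\tau)\|A^{1/2}u_t\|^2$ is bounded in absolute value by $C\|U\|\Han^2\le C'\mathcal{E}$. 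Read as an identity on $\D{\mathcal{A}^\tau}$, this gives $|\mathrm{Re}\langle\mathcal{A}^\tau U,U\rangle_{\mathcal{E}}|\le\omega\|U\|_{\mathcal{E}}^2$, i.e. quasi-dissipativity of both $\mathcal{A}^\tau$ and $-\mathcal{A}^\tau$. The admissible interval for $\mu$ is nonempty precisely when $\gamma=\alpha-\tau c^2(b^\tau)^{-1}>0$, so I would first treat this case and then remove the restriction by a bounded perturbation: for an arbitrary $\alpha>0$ choose $\alpha'$ with $\alpha'-\tau c^2(b^\tau)^{-1}>0$; then $\mathcal{A}^\tau$ and the operator with damping coefficient $\alpha'$ differ only in the third component by the bounded multiplication $U\mapsto(0,0,-\tau^{-1}(\alpha-\alpha')w)$, and since bounded perturbations preserve group generation the general case follows.

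For the range condition I would solve $(\lambda I-\mathcal{A}^\tau)U=F$ explicitly. With $U=(u,v,w)$ and $F=(f_1,f_2,f_3)$ the first two rows give $v=\lambda u-f_1$ and $w=\lambda^2 u-\lambda f_1-f_2$, and since $c^2Au+b^\tau Av=A\psi$ for $\psi:=c^2u+b^\tau v$, the third row collapses to the elliptic equation $(A+\kappa(\lambda))\psi=G$ with $\kappa(\lambda)=\lambda^2(\tau\lambda+\alpha)(b^\tau\lambda+c^2)^{-1}$ and $G\in\Ls$ (no unbounded term survives on the right-hand side). Because $A$ is positive and self-adjoint with $\sigma(A)\subset(0,\infty)$, and $\kappa(\lambda)$ has large positive real part for real $\lambda$ with $|\lambda|$ large, one has $-\kappa(\lambda)\notin\sigma(A)$, so $A+\kappa(\lambda)$ is boundedly invertible and $\psi=(A+\kappa(\lambda))^{-1}G\in\D{A}$. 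Unwinding the substitution recovers $u,v\in\D{A^{1/2}}$, $w\in\Ls$ and $c^2u+b^\tau v=\psi\in\D{A}$, hence $U\in\D{\mathcal{A}^\tau}$ and $\lambda I-\mathcal{A}^\tau$ is onto $\Ha$; replacing $\lambda$ by $-\lambda$ gives the same conclusion for $-\mathcal{A}^\tau$. With quasi-dissipativity and the range condition established, the Lumer--Phillips theorem shows that $\pm\mathcal{A}^\tau$ each generate a $C_0$-semigroup, so $\mathcal{A}^\tau$ generates a $C_0$-group on $\Ha$, and the isometry $A^{1/2}:\Hc\to\Ha$ then delivers the group on $\Hc$.
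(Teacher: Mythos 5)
Your proposal is correct, but it supplies a genuinely different (and far more self-contained) argument than the paper, whose entire proof of Theorem \ref{wp-apb} is a citation of \cite{TrigMGT} for generation on $\Ha$ followed by the same $A^{1/2}$-invariance remark you use for $\Hc$. Your reconstruction via Lumer--Phillips is sound: the two multiplier identities are exactly the ones the paper itself derives later (your $\mathcal{E}_0$ and $\mathcal{E}_1$ are, up to the splitting, the paper's $E_1^\tau$ and $E_0^\tau$ from \eqref{eid1}--\eqref{eid2} in the Appendix, with the paper's choice corresponding to the boundary weight $\mu=c^2/b^\tau$ plus a copy of $E_0^\tau$), the positivity conditions $\mu\in(c^2/b^\tau,\alpha/\tau)$ are right, and the cancellation of all $\p w$ terms in $\mathrm{Re}\langle\mathcal{A}^\tau U,U\rangle_{\mathcal{E}}$ checks out on the domain. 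Two points deserve credit: first, the bounded-perturbation step to remove the restriction $\gamma^\tau>0$ is genuinely needed, since the theorem assumes only $\alpha,c,\delta>0$ and $\gamma^\tau$ can be negative for small $\alpha$ --- the paper hides this inside the citation; second, your explicit reduction of the resolvent equation to $(A+\kappa(\lambda))\psi=G$ with $\psi=c^2u+b^\tau v$ is essentially the same computation the paper performs later in the proof of Proposition \ref{spec1} to identify the continuous spectrum, so your argument usefully foreshadows that section. One small bookkeeping remark: for $\mathcal{A}^\tau$ to map into $\Ha$ the domain must also require $w\in\D{\p}$ (the paper's displayed description of $\mathcal{D}(\A^\tau)$ omits this), and your range computation does deliver $w=\lambda v-f_2\in\D{\p}$, so your $U$ indeed lands in the correct domain. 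What the paper's approach buys is brevity; what yours buys is a proof readable without the external reference and an energy functional that is reused verbatim for the uniform-in-$\tau$ estimates of Section 3.1.
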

		Theorem \ref{wp-apb} follows from \cite{TrigMGT} applied to $\Ha$ space.  The invariance of the generator  under the multiplication by fractional powers of $A$  leads to the result stated for   $\Hc$.
		
	\begin{theorem}\label{wp-apb-he} 	
	 	\emph{(}{\bf Equi-boundedness and uniform (in $\tau$) exponential stability in $\mathbb{H}_0^{\tau}$}\emph{)}. Consider the family $\mathcal{F} = \{T^\tau(t)\}_{\tau>0}$ of groups generated by $\A^\tau$ on $\Ha.$  Assume that $\gamma^\tau \equiv \alpha - c^2\tau (b^\tau)^{-1}\geq \gamma_0 >0 $. Then, there exists $\tau_0 > 0$ and constants $M = M(\tau_0),\omega = \omega(\tau_0)>0$ $($both independent on $\tau)$ such that $$\|T^\tau(t)\|_{\mathcal{L}(\mathbb{H}^{\tau}_0)} \leqslant M e^{-\omega t} \ \mbox{for all} \ \tau \in (0,\tau_0] \ \mbox{and} \ t \geqslant 0.$$
	 \end{theorem}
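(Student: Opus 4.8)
The plan is to prove the uniform decay by a multiplier/Lyapunov argument rather than by resolvent estimates, since this route delivers the constants $M,\omega$ explicitly and lets one track their $\tau$-independence. I would construct a functional $\mathcal{L}_\tau$ that is equivalent, \emph{with constants independent of} $\tau$, to the weighted energy
\[
\mathcal{N}(t):=\|U^\tau(t)\|_{\tau,0}^2=\|A^{1/2}u^\tau\|^2+\|A^{1/2}u^\tau_t\|^2+\tau\|u^\tau_{tt}\|^2,
\]
and whose derivative satisfies $\tfrac{d}{dt}\mathcal{L}_\tau\le-\kappa\,\mathcal{N}$ with $\kappa>0$ uniform in $\tau$; Grönwall then closes the argument. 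Because Theorem \ref{wp-apb} already furnishes a $C_0$-group, it suffices to run all the computations on smooth solutions (data in $\mathcal{D}(\mathcal{A}^\tau)$, iterated) and extend by density. Below $(\cdot,\cdot),\|\cdot\|$ are the $L^2(\Omega)$ inner product and norm, $C_P$ the Poincaré constant ($\|v\|^2\le C_P\|A^{1/2}v\|^2$), and I write $u=u^\tau$.

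Testing \eqref{MGTtau} successively against $u_{tt}$, $u_t$ and $u$ and integrating the self-adjoint $A$ by parts would produce three functionals with complementary dissipation, namely
\[
\mathcal{E}_1=\tfrac{\tau}{2}\|u_{tt}\|^2+c^2(A^{1/2}u,A^{1/2}u_t)+\tfrac{b^\tau}{2}\|A^{1/2}u_t\|^2,\quad
\mathcal{E}_2=\tau(u_{tt},u_t)+\tfrac{\alpha}{2}\|u_t\|^2+\tfrac{c^2}{2}\|A^{1/2}u\|^2,
\]
\[
\mathcal{E}_3=\tau(u_{tt},u)+\alpha(u_t,u)+\tfrac{b^\tau}{2}\|A^{1/2}u\|^2,
\]
whose time derivatives along the flow are
\begin{align*}
\tfrac{d}{dt}\mathcal{E}_1&=-\alpha\|u_{tt}\|^2+c^2\|A^{1/2}u_t\|^2,\\
\tfrac{d}{dt}\mathcal{E}_2&=\tau\|u_{tt}\|^2-b^\tau\|A^{1/2}u_t\|^2,\\
\tfrac{d}{dt}\mathcal{E}_3&=\tau(u_{tt},u_t)+\alpha\|u_t\|^2-c^2\|A^{1/2}u\|^2.
\end{align*}
The first identity carries a \emph{bad} sign on $\|A^{1/2}u_t\|^2$, the second supplies the matching good sign, and the third injects the otherwise missing $-\|A^{1/2}u\|^2$ dissipation.

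I would then set $\mathcal{L}_\tau:=\mathcal{E}_1+\lambda\,\mathcal{E}_2+\epsilon\,\mathcal{E}_3$, so that
\[
\tfrac{d}{dt}\mathcal{L}_\tau=-(\alpha-\lambda\tau)\|u_{tt}\|^2-(\lambda b^\tau-c^2)\|A^{1/2}u_t\|^2+\epsilon\big[\tau(u_{tt},u_t)+\alpha\|u_t\|^2-c^2\|A^{1/2}u\|^2\big].
\]
The two leading coefficients are simultaneously positive exactly when $c^2/b^\tau<\lambda<\alpha/\tau$, a window of length $\gamma^\tau/\tau>0$, i.e.\ precisely the hypothesis $\gamma^\tau\ge\gamma_0$. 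Since $b^\tau=\delta+\tau c^2\ge\delta$ gives $c^2/b^\tau\le c^2/\delta$ for every $\tau$, after shrinking $\tau_0$ to satisfy $\tau_0<\alpha\delta/c^2$ I can fix a \emph{single} $\lambda\in(c^2/\delta,\alpha/\tau_0)$ valid for all $\tau\in(0,\tau_0]$, forcing $\alpha-\lambda\tau\ge\alpha-\lambda\tau_0>0$ and $\lambda b^\tau-c^2\ge\lambda\delta-c^2>0$ uniformly. Estimating the $\epsilon$-terms by Young and Poincaré, $\tau(u_{tt},u_t)\le\tfrac{\tau}{2}\|u_{tt}\|^2+\tfrac{\tau C_P}{2}\|A^{1/2}u_t\|^2$ and $\alpha\|u_t\|^2\le\alpha C_P\|A^{1/2}u_t\|^2$, and choosing $\epsilon$ small (independently of $\tau$, using $\tau\le\tau_0$) would yield $\tfrac{d}{dt}\mathcal{L}_\tau\le-\kappa(\|u_{tt}\|^2+\|A^{1/2}u_t\|^2+\|A^{1/2}u\|^2)\le-\kappa'\mathcal{N}$, the last step via $\tau\|u_{tt}\|^2\le\tau_0\|u_{tt}\|^2$.

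The last step is the uniform equivalence $c_1\mathcal{N}\le\mathcal{L}_\tau\le c_2\mathcal{N}$, and this is where I expect the main obstacle to lie: the indefinite cross terms in $\mathcal{L}_\tau$ must be absorbed with weights chosen \emph{without reference to} $\tau$. The $\tau$-weighted crosses $\lambda\tau(u_{tt},u_t)$ and $\epsilon\tau(u_{tt},u)$ can be absorbed because their $\|u_{tt}\|^2$-content is made $<\tfrac{\tau}{2}\|u_{tt}\|^2$ (small Young weight, resp.\ small $\epsilon$) while the residual $\|u_t\|^2,\|u\|^2$ are controlled through $C_P$ and $\tau\le\tau_0$; the $\tau$-free cross $c^2(A^{1/2}u,A^{1/2}u_t)$ is split with a weight $\eta\in(c^2/\delta,\lambda)$, a nonempty interval precisely because $\lambda>c^2/\delta$. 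Granting this two-sided bound, $\tfrac{d}{dt}\mathcal{L}_\tau\le-(\kappa'/c_2)\mathcal{L}_\tau$, and Grönwall gives $\mathcal{N}(t)\le(c_2/c_1)\mathcal{N}(0)e^{-(\kappa'/c_2)t}$, i.e.\ the claim with $M=\sqrt{c_2/c_1}$ and $\omega=\kappa'/(2c_2)$. The genuine difficulty is thus not the per-$\tau$ decay (available from the cited stability theory) but the uniformity of $M,\omega$ as $\tau\to0$; the two structural facts that make it work are the uniform bound $b^\tau\ge\delta$ and the uniform non-degeneracy of the window $(c^2/b^\tau,\alpha/\tau)$ from $\gamma^\tau\ge\gamma_0$, with the singular $\tau^{-1}$ blow-up of $\mathcal{A}^\tau$ neutralized by measuring everything in the $M_\tau$-weighted norm $\mathcal{N}$.
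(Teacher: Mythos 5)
Your proposal is correct, and the three energy identities you derive (multipliers $u_{tt}$, $u_t$, $u$) are exactly the identities the paper uses — your $\mathcal{E}_1,\mathcal{E}_2,\mathcal{E}_3$ correspond to the paper's \eqref{eid1}, \eqref{eid2}/\eqref{aux1} and \eqref{aux5}. Where you genuinely diverge is in how these are assembled. The paper takes the combination $E_1^\tau=\mathcal{E}_1+\frac{c^2}{b^\tau}\mathcal{E}_2$, i.e.\ it sits at the \emph{left endpoint} $\lambda=c^2/b^\tau$ of your admissible window $(c^2/b^\tau,\alpha/\tau)$, which yields the clean identity $\frac{d}{dt}E_1^\tau+\gamma^\tau\|u_{tt}\|^2=0$ but no damping of $\|A^{1/2}u_t\|^2$ or $\|A^{1/2}u\|^2$; it then proves equi-boundedness separately (Lemma \ref{equi}), uses the $u_t$- and $u$-multipliers to show $\int_0^\infty\|T^\tau(s)U_0\|^2_{\tau,0}\,ds\le\overline{K}\|U_0\|^2_{\tau,0}$ with $\overline{K}$ independent of $\tau$, and invokes the Datko--Pazy theorem to convert integrability plus equi-boundedness into a uniform exponential rate. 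You instead choose $\lambda$ strictly inside the window (fixed uniformly via $b^\tau\ge\delta$ and $\tau\le\tau_0<\alpha\delta/c^2$) and add $\epsilon\mathcal{E}_3$, producing a genuine Lyapunov functional with $\frac{d}{dt}\mathcal{L}_\tau\le-\kappa'\mathcal{N}$ and closing by Gr\"onwall. Your route is more self-contained (no Datko--Pazy) and yields explicit $M,\omega$; its price is the two-sided equivalence $c_1\mathcal{N}\le\mathcal{L}_\tau\le c_2\mathcal{N}$ with $\tau$-independent constants, which you correctly identify as the delicate point and handle with the right mechanisms ($b^\tau\ge\delta$, the nonempty splitting window $\eta\in(c^2/\delta,\lambda)$ for the $\tau$-free cross term, and Poincar\'e plus $\tau\le\tau_0$ for the $\tau$-weighted crosses); be aware that absorbing the residual $\frac{\lambda\tau}{2\theta}\|u_t\|^2$ term forces one further, harmless, shrinking of $\tau_0$, which the theorem's formulation permits. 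The paper's two-step structure buys reusability — the equi-boundedness constant $M$ of \eqref{hh1} is invoked again in the proof of Theorem \ref{cr} — whereas your single functional delivers the decay in one stroke.
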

 
 \medskip 
 
 \begin{theorem}\label{wp-apb-he-n}Let $\alpha, c, \delta > 0.$ Then
 	
 	\emph{(a)} \emph{(}{\bf generation on $\Hb$}\emph{)}  For each $\tau >0 $ the operator $\mathcal{A}^\tau$ generates a $C_0-$group $\{T^\tau(t)\}_{t \geqslant 0}$ on $\mathbb{H}_1$. 
 	
 	\emph{(b)} \emph{(}{\bf equi-boundedness and uniform (in $\tau$) exponential stability}\emph{)} Consider the family $\mathcal{F}_1 = \{T^\tau(t)\}_{\tau>0}$ of groups generated by $\A^\tau$ on $\Hb.$  Assume $\gamma^{\tau} > \gamma_0  > 0 $. Then, there exists $\tau_0 > 0$ and constants $\overline{M}_1 = \overline{M}_1(\tau_0), \  \overline{\omega}_1 = \overline{\omega}_1(\tau_0)>0$, both independent on $\tau$ such that $$\|T^\tau(t)\|_{\mathcal{L}(\mathbb{H}^{\tau}_i)} \leqslant \overline{M}_1 e^{-\overline{\omega}_1 t} \text\ \mbox{for all} \ \tau \in (0,\tau_0] \ \mbox{and} \ t \geqslant 0.$$
 \end{theorem}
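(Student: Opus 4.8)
The plan is as follows. Part (a), generation of the group on $\Hb$, is essentially already available: for every fixed $\tau>0$ one has $b^\tau=\delta+\tau c^2>0$, and the generation of a $C_0$-group for the MGT operator on $\D A\times\D\p\times\Ls$ under the single assumption that the coefficient in front of $Au_t$ is positive is exactly the content of \cite{mgtp1,TrigMGT}. The only thing to observe is that the $\tau$-dependence of $b^\tau$ is irrelevant at this stage, since only positivity is used. Thus (a) follows by quoting \cite{TrigMGT} with the parameter choice $b=b^\tau>0$.

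For part (b) the essential difference with Theorem~\ref{wp-apb-he} is that the $\Hb^\tau$ norm upgrades only the first component from $\D\p$ to $\D A$, leaving the other two slots at their $\Ha^\tau$ level. This asymmetry is what blocks the simple ``multiply by a fractional power of $A$'' reduction that turned the $\Ha$ estimate into the $\Hc$ estimate (there all three slots are raised simultaneously, and $\A^\tau$ commutes with $\mathrm{diag}(A^{1/2},A^{1/2},A^{1/2})$). The organizing observation I would use instead is that, by the very definition of the weighted norms, $D=\mathrm{diag}(A^{1/2},I,I)$ is an isometric isomorphism $D:\Hb^\tau\to\Ha^\tau$, so that
\begin{equation*}
\|T^\tau(t)\|_{\mathcal L(\Hb^\tau)}=\|D\,T^\tau(t)\,D^{-1}\|_{\mathcal L(\Ha^\tau)},
\end{equation*}
where $D\,T^\tau(t)\,D^{-1}$ is the group generated on $\Ha^\tau$ by the similar operator $D\A^\tau D^{-1}$. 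A direct computation shows this operator is MGT-like but not of MGT form: only its first row and its $(3,1)$ entry are altered, while the damping block (the $b^\tau Au_t$ and $\alpha u_{tt}$ terms, hence the quantity $\gamma^\tau$) is left untouched. Consequently part (b) is equivalent to a uniform-in-$\tau$ stability estimate on $\Ha^\tau$ for $D\A^\tau D^{-1}$, and the point is that the dissipation mechanism responsible for Theorem~\ref{wp-apb-he} survives the conjugation.

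Concretely, I would reproduce the Lyapunov-functional argument of Theorem~\ref{wp-apb-he} at the level of $\Hb^\tau$: build a functional $\E_1^\tau(t)$ by testing \eqref{MGTtau} against the same multipliers as in the $\Ha^\tau$ estimate, but carrying one extra power of $A$ on the first-component terms (equivalently, applying those multipliers to $D\A^\tau D^{-1}$), and arrange that (i) $\E_1^\tau(t)$ is equivalent to $\|U^\tau(t)\|\Hbn^2$ with equivalence constants independent of $\tau\in(0,\tau_0]$, and (ii) $\frac{d}{dt}\E_1^\tau(t)\le-2\overline\omega_1\,\E_1^\tau(t)$ with $\overline\omega_1>0$ independent of $\tau$. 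The hypothesis $\gamma^\tau\ge\gamma_0>0$ is precisely what supplies a strictly positive, $\tau$-independent floor for the dissipated quantity, exactly as in the $\Ha^\tau$ case. Integrating and passing back through the norm equivalence yields $\|U^\tau(t)\|\Hbn\le\overline M_1 e^{-\overline\omega_1 t}\|U_0\|\Hbn$. Since $\|Au^\tau(t)\|$ need not be differentiable for data merely in $\Hb$, I would first carry out all pairings for data in a dense, more regular class (e.g.\ $\Hc$ or $\mathcal D((\A^\tau)^2)$) and then extend the final estimate to all of $\Hb$ by density together with the group bound.

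The main obstacle is the $\tau$-uniformity, not the decay for fixed $\tau$ (already in \cite{TrigMGT}). Because the generator carries singular $\tau^{-1}$ coefficients, introducing the extra power of $A$ produces cross terms that are a priori of the same singular order as the leading ones; the crux is to regroup them, using the identity $A(c^2u^\tau+b^\tau u^\tau_t)=-(\tau u^\tau_{ttt}+\alpha u^\tau_{tt})$ from the equation, so that the genuinely singular contributions cancel and the remainder is absorbed by the $\gamma^\tau$-dissipation. One must simultaneously check that the correction (cross) terms in $\E_1^\tau$ do not degenerate as $\tau\to0$, so that the equivalence in (i) holds uniformly; this is the delicate bookkeeping step, and it is here that the restriction to $\tau\in(0,\tau_0]$ for a suitable $\tau_0=\tau_0(\gamma_0)$ enters.
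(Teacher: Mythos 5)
Part (a) of your proposal coincides with the paper's: generation on $\Hb$ is quoted from \cite{mgtp1,TrigMGT}, using only $b^\tau>0$, plus the (for fixed $\tau$) equivalence of the weighted and unweighted norms. Your diagnosis for part (b) is also correct as far as it goes: the componentwise-$A^{1/2}$ trick that handles $\Hc$ fails for $\Hb$ because only the first slot is upgraded, and the conjugation $D=\mathrm{diag}(A^{1/2},I,I)$ is a legitimate isometric reformulation (the paper achieves the same effect more directly by simply augmenting the $\Ha$-energy with $\|Au^\tau(t)\|_2^2$, cf.\ \eqref{hef} and Lemma \ref{s2}, and then using the multiplier $Au^\tau$ in \eqref{aux7}--\eqref{aux8} to control the new term by the already-established $\Ha^\tau$ estimates).

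The genuine gap is in your step (ii). You assert a pointwise differential inequality $\frac{d}{dt}\E_1^\tau(t)\le -2\overline\omega_1\,\E_1^\tau(t)$ with $\overline\omega_1$ independent of $\tau$, for a functional equivalent to $\|\cdot\|^2\Hbn$. This is strictly stronger than what the available dissipation identity provides: \eqref{en} dissipates only $\gamma^\tau\|u^\tau_{tt}\|_2^2$, and the multipliers $u^\tau_t$, $u^\tau$, $Au^\tau$ yield \emph{integrated-in-time} control of $\|\p u^\tau_t\|_2^2$, $\|\p u^\tau\|_2^2$, $\|Au^\tau\|_2^2$ (with boundary terms absorbed by equi-boundedness), not a pointwise domination of the full energy by the instantaneous dissipation. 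You neither construct the perturbed functional whose cross terms would close such an inequality nor verify that its equivalence constants survive $\tau\to 0$ -- and you correctly identify this as ``the delicate bookkeeping step,'' but that step is precisely the content of the theorem. The paper avoids the issue entirely: it proves equi-boundedness of $\|T^\tau(t)\|_{\mathcal L(\Hb^\tau)}$ and the Datko-type bound $\int_0^\infty\|T^\tau(s)U_0\|^2\Hbn\,ds\le \overline{K_1}\|U_0\|^2\Hbn$ (see \eqref{es0}, \eqref{aux8}, \eqref{es1}), and then invokes Theorem 4.1 of \cite{pazy} to extract a decay rate that is manifestly $\tau$-independent. I recommend you either switch to this integrability criterion, for which your multiplier computations suffice as stated, or actually exhibit the Lyapunov functional with its $\tau$-uniform coefficients; as written, the decisive estimate is announced rather than proved.
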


\begin{rmk}\label{rmkh2}
	Notice that the space $\mathbb{H}_2$ is obtained by multiplication of elements in $\mathbb{H}_0$ by $A^{1/2}$ (componentwise), therefore, if we assume initial data in $\mathbb{H}_2$, it follows that uniform (in $\tau$) boundedness and stability of the dynamics remain true.
\end{rmk}
  In order to characterize  asymptotic behavior of  the family $\mathcal{F}$  of the groups $T^{\tau}(t) $  we introduce the space $\Ha^{0} \equiv \mathcal{D}(A^{1/2}) \times \mathcal{D}(A^{1/2} ) $ and 
   the projection operator   $P: \mathbb{H}_0 \to \mathbb{H}_0^0$  defined as $$\mathbb{H}_0 \ni (u,v,w) \mapsto (u,v) \in \mathbb{H}_0^0.$$ 
   With this notation Theorem \ref{wp-apb-he} implies  uniform boundedness of the sequence 
   \begin{equation}\label{lim}
  ||P T^{\tau}(t)E||_{L(\Ha^0)} \leq M e^{-\omega t}, t > 0,
  \end{equation}
   where $E$ denotes the extension operator from $\Ha^0\rightarrow \Ha^{\tau} $  defined by  $E(u,v) \equiv (u,v,0) $. From  (\ref{lim}) we deduce that for every $ U^0=(u_0,u_1) \in \Ha^0 $ 
   the corresponding  projected  solutions $P T^{\tau}(t) EU^0$ have a weakly convergent subsequence in $\Ha^0$ and weakly star in $L^{\infty}(0,\infty; \Ha^0).$  By standard distributional calculus  one shows that 
   such subsequence converges {\it weakly} to  $U^0(t) = (u^0(t), u_t^0(t) )$  which satisfies (distributionally)  the following {\bf limit} equation 
   
	  \begin{equation}\label{MGT0}\begin{cases}
 	\alpha u^0_{tt} + c^2 A u^0 + \delta A u^0_t = 0,\\
 	u^0(0,\cdot) = u_0, u^0_t(0,\cdot) = u_1,
 	\end{cases}
 	\end{equation}
 	
 	which rewritten as first order system  becomes 
	 \begin{equation}\begin{cases}U^0_t(t) = \mathcal{A} U^0(t), \ t>0, \\ U^0(0)=U_0^0 = (u_0,u_1)^T, \end{cases}\label{AbS0}\end{equation} where \begin{equation}U^0 \equiv \left(\begin{array}{c}
 	u^0 \\
 	u^0_t
 	\end{array}\right); \ \mathcal{A}=\left(\begin{array}{cc}
 	0  & I \\
 	-c^2 \alpha^{-1}A & -\delta \alpha^{-1}A
 	\end{array}\right)\label{op2}\end{equation}
 	and
	$$\A : \mathcal{D}(\A) \subset \Ha^0  \rightarrow  \Ha^0 $$ with 
	$$\mathcal{D}(\A) =\{ (u,v) \in \mathcal{D}(\A^{1/2} ) \times \mathcal{D}(\A^{1/2} )
 	; c^2 u + \delta v \in \mathcal{D}(A^{3/2} ) \}. $$
	Equation (\ref{MGT0}) is  a known and well studied in the literature  strongly damped wave equations. In fact,  generation of an analytic  and exponentially decaying semigroup on  the space $ \mathcal{D}(A^{1/2}) \times L^2(\Omega)$ is a standard by now result \cite{lunardi,trig89, crd82}. Less standard is the analysis  on $\mathbb{H}_0^0 \equiv \D{\p} \times \D{\p}$, where contractivity and dissipativity are no longer valid. This  latter is the framework relevant to our analysis.

 	\begin{prop}\label{lp} \emph{(a)} \emph{(}{\bf generation of a semigroup on  $\mathbb{H}_0^0$ }\emph{)} Let $ \mathbb{H}_0^0 \equiv \D{\p} \times \D{\p}$ and $\alpha,\delta,c > 0$. Then the operator $\mathcal{A}$ generates an $($noncontractive$)$ analytic semigroup $\left\{T(t)\right\}_{t \geqslant 0}$ in $\mathbb{H}_0^0.$
 		
 		\emph{(b)} \emph{(}{\bf exponential stability}\emph{)} There exist constants $M_0,\omega_0>0$ such that $$\|T(t)\|_{\mathcal{L}(\Ha^0)} \leqslant M_0e^{-\omega_0t}, \ t\geqslant 0.$$
 	\end{prop}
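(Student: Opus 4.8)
\emph{Plan.} I would prove both parts at once through a direct spectral analysis, exploiting that $A$ is positive self-adjoint with compact resolvent and therefore admits an orthonormal basis of eigenfunctions $\{e_k\}_{k\ge1}$ with $Ae_k=\lambda_k e_k$, $0<\lambda_1\le\lambda_2\le\cdots\to\infty$. Writing $(u,v)=\sum_k(u_k,v_k)e_k$, the operator $\mathcal{A}$ becomes block diagonal, $\mathcal{A}=\bigoplus_k\mathcal{A}_k$ with $\mathcal{A}_k=\begin{pmatrix}0&1\\ -a\lambda_k&-d\lambda_k\end{pmatrix}$, where $a=c^2\alpha^{-1}$ and $d=\delta\alpha^{-1}$. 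I would first observe that the diagonal isomorphism $\mathrm{diag}(A^{1/2},A^{1/2})$ maps $\mathbb{H}_0^0$ isometrically onto $L^2(\Omega)\times L^2(\Omega)$ and commutes with $\mathcal{A}$; hence $\mathcal{A}$ on $\mathbb{H}_0^0$ is unitarily equivalent to $\mathcal{A}$ on $L^2(\Omega)\times L^2(\Omega)$ and the weights $\lambda_k$ drop out of the norm. In particular the whole analysis reduces to \emph{uniform-in-$k$} estimates for the scalar $2\times2$ resolvents $R(\mu,\mathcal{A}_k)$ in the Euclidean norm of $\mathbb{C}^2$. Since contractivity and dissipativity genuinely fail on $\mathbb{H}_0^0$ (the space is not in the $\mathcal{A}$-invariant scale generated from $\mathcal{D}(A^{1/2})\times L^2$, so the standard result cannot be transferred), the argument would deliberately avoid Lumer--Phillips and instead verify the sectorial resolvent characterization of analytic generators.

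\emph{Spectrum.} The characteristic polynomial of $\mathcal{A}_k$ is $p_k(\mu)=\mu^2+d\lambda_k\mu+a\lambda_k$, with roots $\mu_k^\pm=\tfrac12\big(-d\lambda_k\pm\sqrt{d^2\lambda_k^2-4a\lambda_k}\big)$. A short computation shows that for $\lambda_k\ge 4a/d^2$ the roots are real and negative, the ``slow'' branch satisfying $\mu_k^+\le -a/d$ with $\mu_k^+\to-a/d$, and the ``fast'' branch $\mu_k^-\to-\infty$ like $-d\lambda_k$; for the finitely many $k$ with $\lambda_k<4a/d^2$ the roots form a conjugate pair with real part $-d\lambda_k/2$. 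In all cases $\mathrm{Re}\,\mu_k^\pm\le-\omega_0$ with $\omega_0:=\min\{a/d,\ d\lambda_1/2\}>0$, so that $s(\mathcal{A})\le-\omega_0<0$ and $\sigma(\mathcal{A})=\{\mu_k^\pm\}$ lies in a fixed sector of the open left half-plane.

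\emph{Resolvent bound.} Solving $(\mu I-\mathcal{A})(u,v)=(f,g)$ mode by mode gives $u_k=\dfrac{g_k+(\mu+d\lambda_k)f_k}{p_k(\mu)}$ and $v_k=\dfrac{\mu g_k-a\lambda_k f_k}{p_k(\mu)}$. The desired bound $\|R(\mu,\mathcal{A})\|\le C/|\mu|$ on a sector then follows once the single scalar estimate $|p_k(\mu)|\ge c_\theta\,|\mu|\,(|\mu|+\lambda_k)$ is established, uniformly in $k$, on a sector $\Sigma_\theta=\{\mu\neq0:|\arg\mu|\le\theta\}$ with $\theta\in(\pi/2,\pi)$; indeed each of the four coefficients $\tfrac{1}{p_k},\tfrac{\mu+d\lambda_k}{p_k},\tfrac{\mu}{p_k},\tfrac{a\lambda_k}{p_k}$ is then $O(|\mu|^{-1})$. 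I expect this estimate to be the heart of the proof, and would prove it by a case split: for $\lambda_k\le\varepsilon|\mu|$ the leading term $\mu^2$ dominates and $|p_k|\ge\tfrac12|\mu|^2$; for $\lambda_k\ge\varepsilon|\mu|$ one writes $p_k(\mu)=\lambda_k(a+d\mu)+\mu^2$ and uses that $a+d\mu$ is bounded away from $0$ on $\Sigma_\theta$ (its only zero $\mu=-a/d$ lies on the excluded negative axis), treating $|\mu|$ large and $|\mu|$ bounded separately.

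\emph{Conclusion.} The sector $\Sigma_\theta$, shifted by $-\omega_0+\epsilon$ so as to also contain a half-plane strictly inside $\{\mathrm{Re}\,\mu<0\}$, lies in $\rho(\mathcal{A})$ and carries the bound $\|R(\mu,\mathcal{A})\|_{\mathcal{L}(\mathbb{H}_0^0)}\le C/|\mu|$; by the standard sectorial characterization this proves that $\mathcal{A}$ generates an analytic semigroup, giving part (a), with no claim of contractivity. For part (b), analyticity yields the spectral mapping property, so the growth bound coincides with $s(\mathcal{A})\le-\omega_0<0$; equivalently, the resolvent bound on the shifted half-plane produces the estimate $\|T(t)\|_{\mathcal{L}(\mathbb{H}_0^0)}\le M_0e^{-\omega_0t}$ directly through the Dunford contour representation. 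The main obstacle, as indicated, is the uniform-in-$k$ lower bound on $|p_k|$ over the sector, together with the reduction of the weighted operator norm on $\mathbb{H}_0^0$ to the scalar $2\times2$ bounds.
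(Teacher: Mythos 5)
Your proposal is correct, but it follows a genuinely different route from the paper. The paper's proof of part (a) is essentially a citation: generation and analyticity on $L^2(\Omega)\times L^2(\Omega)$ are taken from the literature, and the result is transported to $\mathbb{H}_0^0$ by the commutation of $\mathcal{A}$ with $\mathrm{diag}(A^{1/2},A^{1/2})$ --- the same observation you make, though you use it at the resolvent level rather than at the semigroup level. For part (b) the paper offers two arguments: a soft one, deducing the decay from the uniform-in-$\tau$ bound \eqref{lim} on $\|PT^{\tau}(t)E\|_{\mathcal{L}(\Ha^0)}$ via weak lower semicontinuity of the norm, and an alternative one invoking analyticity, the spectrum-determined growth condition, and the spectral location established separately in Proposition \ref{spec2}. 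Your argument is a self-contained, quantitative version of this second alternative: the eigenfunction decomposition reduces everything to uniform-in-$k$ bounds on $2\times 2$ resolvents (legitimate here precisely because both components of $\mathbb{H}_0^0$ carry the same weight $\lambda_k$, so the weight cancels blockwise), and you then verify the sectorial resolvent estimate and the spectral bound directly. What your approach buys is independence from both the cited generation theorems and from Theorem \ref{wp-apb-he}; what it costs is that the whole burden falls on the uniform lower bound $|p_k(\mu)|\geqslant c_\theta|\mu|(|\mu|+\lambda_k)$, which you correctly identify as the heart of the matter but whose proof sketch is the one rough spot: in the regime $\lambda_k\geqslant\varepsilon|\mu|$, bounding $|\mu^2+\lambda_k(a+d\mu)|$ from below by $\lambda_k|a+d\mu|-|\mu|^2$ does not close, since $|\mu|^2$ can dominate $\lambda_k|a+d\mu|$ there. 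A cleaner way to finish is to factor $p_k(\mu)=(\mu-\mu_k^+)(\mu-\mu_k^-)$ and use that the roots lie either on the negative real axis or among finitely many conjugate pairs with arguments in a compact subset of $(\pi/2,\pi]$, so that the angular separation from $\Sigma_\theta$ is uniform and $|\mu-\mu_k^\pm|\geqslant c_\theta(|\mu|+|\mu_k^\pm|)$, with $|\mu_k^-|\geqslant d\lambda_k/2$; this yields the claimed bound at once. With that repair the argument is complete, and it has the added benefit of reproving the spectral facts of Proposition \ref{spec2} along the way.
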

	\begin{proof}
	The well-posedness and analyticity of the associated generator on the space $ L^2(\Omega) \times L^2(\Omega)$  
  is a direct consequence of \cite{bible} (Theorem 3B.6, p. 293) and  \cite{trigspec} (Proposition 2.2, p. 387). Invariance of the semigroup under the action of $A^{1/2} $  implies the same result  in $\mathbb{H}_0^0 $, hence justifying the  part (a) of  Proposition \ref{lp}. As to the exponential stability,  while this is a well known fact proved by energy methods on the space $\mathcal{D}(A^{1/2}) \times \Ls $, the decay rates on  $\Ha^0$ (nondissipative case)   need a  justification. 
 In our case,  this follows from the estimate  in (\ref{lim})  along with weak lower semicontinuity of $|| P T^{\tau}(t) EU^0||_{\Ha^0}^2 $. The conclusion on exponential stability can also be derived independently of the family $\mathcal{F} $,  by evoking analyticity of the generator \cite{trig89} 
    along with the spectrum growth determined condition and the analysis of the  location of the spectrum (see section 2.2 below).
	\end{proof}

\begin{rmk}
	Proposition \ref{lp} also holds   with $\mathbb{H}_0^0$ replaced by   $\Hb^0 \equiv \D{A} \times \D{A}.$
\end{rmk}

 Our main interest and goal of this work is 	to provide a quantitative description of {\it strong} convergence, when $\tau \rightarrow 0 $,  of hyperbolic  groups $T^{\tau}(t) $ to the parabolic like semigroup $T(t)$.  Our result is formulated below. 
 	
 	\begin{theorem}\label{cr} \emph{(a)} {\bf (Rate of convergence)}  Let $U_0 \in \mathbb{H}_2.$ Then there exists $C = C(T, \tau_0)$ such that  $$\|P T^\tau(t) U_0 - T(t)PU_0\|_{\mathbb{H}_0^0}^2 \leqslant C \tau \|U_0\|_{\mathbb{H}_2}^2$$  uniformly for  $ t \in [0,T].$
 		
 		\emph{(b)} {\bf (Strong convergence)} Let $U_0 \in \mathbb{H}_0$. Then  the following strong convergence holds \begin{equation}\label{convn}
 		\|P T^\tau(t)U_0  - T(t)PU_0\|_{\mathbb{H}_0^0 } \to 0 ~as~\tau \to 0
 		\end{equation} uniformly for all $t \geqslant 0.$
 	\end{theorem}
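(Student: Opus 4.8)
The plan is to pass to the error variable and read the limit equation as the reference dynamics, so that the singular third‑order term becomes a vanishing forcing. Setting $z \equiv u^\tau - u^0$ and subtracting \eqref{MGT0} from \eqref{MGTtau}, using $b^\tau = \delta + \tau c^2$, gives the forced strongly damped wave equation
\begin{equation}
\alpha z_{tt} + c^2 A z + \delta A z_t = F^\tau, \qquad F^\tau \equiv -\tau\bigl(u^\tau_{ttt} + c^2 A u^\tau_t\bigr),\nonumber
\end{equation}
with, crucially, \emph{zero} initial data $z(0)=z_t(0)=0$, since the first two components of $U_0$ and $PU_0$ coincide. As $PT^\tau(t)U_0 - T(t)PU_0 = (z(t),z_t(t))$, part (a) amounts to $\|(z,z_t)\|_{\mathbb H_0^0}^2 \leq C\tau\|U_0\|_{\mathbb H_2}^2$ on $[0,T]$. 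The key enabling estimate is a \emph{refined} uniform bound on $u^\tau_{tt}$: by Remark \ref{rmkh2} the group is uniformly (in $\tau$) bounded on $\mathbb H_2^\tau$, so $c^2 A u^\tau + b^\tau A u^\tau_t$ is bounded in $\Ls$ uniformly in $\tau$; reading \eqref{MGTtau} as the scalar‑in‑time singularly perturbed ODE $\tau\tfrac{d}{dt}u^\tau_{tt} + \alpha u^\tau_{tt} = -(c^2 A u^\tau + b^\tau A u^\tau_t)$ and solving via the integrating factor $e^{\alpha t/\tau}$ yields $\|u^\tau_{tt}(t)\|_{2} \leq C\|U_0\|_{\mathbb H_2}$ with \emph{no} $\tau^{-1/2}$ loss, alongside the weighted bound $\sqrt\tau\,\|A^{1/2}u^\tau_{tt}(t)\|_2 \leq C\|U_0\|_{\mathbb H_2}e^{-\omega t}$ inherited from $\mathbb H_2^\tau$. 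Since $(u_0,u_1)\in\D{A}\times\D{A}$, the remark following Proposition \ref{lp} also gives $T(t)$ bounded on $\D{A}\times\D{A}$, hence $\|u^0_{tt}(t)\|_2 \leq C\|U_0\|_{\mathbb H_2}$.

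With these in hand the energy‑space rate is clean. Multiplying the error equation by $z_t$ and integrating gives
\begin{equation}
\tfrac{\alpha}{2}\|z_t(t)\|_2^2 + \tfrac{c^2}{2}\|A^{1/2}z(t)\|_2^2 + \delta\!\int_0^t\!\|A^{1/2}z_t\|_2^2\,ds = \int_0^t\langle F^\tau, z_t\rangle\,ds.\nonumber
\end{equation}
The benign piece $-\tau c^2\langle A u^\tau_t, z_t\rangle$ is absorbed into the dissipation with an $O(\tau^2)$ remainder. For the singular piece I would write $-\tau u^\tau_{ttt} = -\tfrac{d}{ds}(\tau u^\tau_{tt})$ and integrate by parts in time: the boundary term $-\tau\langle u^\tau_{tt}(t),z_t(t)\rangle$ is $O(\tau^2)$ by Young (using $\|u^\tau_{tt}(t)\|_2\leq C\|U_0\|_{\mathbb H_2}$), while the interior term $\tau\int_0^t\langle u^\tau_{tt}, z_{tt}\rangle$ is $O(\tau)$ because $z_{tt}=u^\tau_{tt}-u^0_{tt}$ and both $\|u^\tau_{tt}\|_2,\|u^0_{tt}\|_2$ are $O(1)$, so $\tau\int_0^t\|u^\tau_{tt}\|_2^2$ and $\tau\int_0^t\langle u^\tau_{tt},u^0_{tt}\rangle$ are each $O(\tau t)$. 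This gives $\|z_t(t)\|_2^2 + \|A^{1/2}z(t)\|_2^2 + \int_0^t\|A^{1/2}z_t\|_2^2\,ds \leq C\tau\|U_0\|_{\mathbb H_2}^2$ on $[0,T]$ — the claimed rate, but only in the energy space $\mathcal E \equiv \D{\p}\times\Ls$.

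The main obstacle is upgrading this to the $\mathbb H_0^0 = \D{\p}\times\D{\p}$ norm, i.e.\ the pointwise bound $\|A^{1/2}z_t(t)\|_2^2 = O(\tau)$. The higher‑order energy (multiplier $A z_t$) does \emph{not} close: it would require $\tau\int_0^t\|A^{1/2}u^\tau_{tt}\|_2^2$, which is only $O(1)$ because $\|A^{1/2}u^\tau_{tt}\|_2$ genuinely blows up like $\tau^{-1/2}$ — there is an $O(1)$ initial layer in $u_{tt}$ (since $u_2\neq u^0_{tt}(0)$ in general) and no $\D{A^{3/2}}$‑regularity is available to repeat the ODE argument at the $A^{1/2}$‑level. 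I would therefore exploit the analyticity of $T(t)$ on $\mathbb H_0^0$ from Proposition \ref{lp} via the identification $\D{(-\mathcal{A})^{1/2}} = \mathbb H_0^0$ relative to $\mathcal E$. In the mild form $(z,z_t)(t)=\int_0^t T(t-s)\,(0,\alpha^{-1}F^\tau(s))\,ds$, the benign and initial‑data contributions are handled by $\|(-\mathcal{A})^{1/2}T(r)\|_{\mathcal L(\mathcal E)}\lesssim r^{-1/2}$ together with the $O(\tau)$ size of $\tau A u^\tau_t$ and of $\tau u_2\in\D{\p}$; after integrating the singular part by parts the dominant contribution is exactly $(0,-\alpha^{-1}\tau u^\tau_{tt}(t))$, whose $\mathbb H_0^0$‑norm equals $\alpha^{-1}\sqrt\tau\,(\sqrt\tau\|A^{1/2}u^\tau_{tt}(t)\|_2)=O(\sqrt\tau)$ and fixes the rate, while the residual convolution $\int_0^t (-\mathcal{A})^{3/2}T(t-s)(0,-\alpha^{-1}\tau u^\tau_{tt})\,ds$ is treated by splitting $v(s)=v(t)+(v(s)-v(t))$, the constant part telescoping through $\int_0^t \mathcal{A}\, T(t-s)\,ds = T(t)-I$ and the increment part absorbing the kernel singularity using time‑regularity of $u^\tau_{tt}$ and the energy‑space bound above. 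This convolution estimate is the delicate technical core of part (a).

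Finally, part (b) follows from (a) by density and uniform decay. Given $U_0\in\mathbb H_0$ and $\varepsilon>0$, choose $U_0^n\in\mathbb H_2$ with $\|U_0-U_0^n\|_{\mathbb H_0}$ small and split
\begin{equation}
\|PT^\tau(t)U_0 - T(t)PU_0\|_{\mathbb H_0^0} \leq \|PT^\tau(t)(U_0-U_0^n)\|_{\mathbb H_0^0} + \|PT^\tau(t)U_0^n - T(t)PU_0^n\|_{\mathbb H_0^0} + \|T(t)P(U_0^n-U_0)\|_{\mathbb H_0^0}.\nonumber
\end{equation}
The outer terms are bounded, uniformly in $t\geq 0$ and $\tau\in(0,\tau_0]$, by $C\|U_0-U_0^n\|_{\mathbb H_0}$ using Theorem \ref{wp-apb-he} (equivalently \eqref{lim}) and Proposition \ref{lp}(b), together with $\|\cdot\|_{\mathbb H_0^\tau}\leq\|\cdot\|_{\mathbb H_0}$ and $\|P\cdot\|_{\mathbb H_0^0}\leq\|\cdot\|_{\mathbb H_0^\tau}$ for $\tau\le\tau_0\le 1$. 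For the middle term, the uniform exponential decays of both families make it $\leq\varepsilon$ for $t\geq T$ with $T$ large (independently of $\tau$), while on $[0,T]$ part (a) makes it $O(\sqrt\tau)\to 0$. Choosing $n$, then $T$, then $\tau$ small yields \eqref{convn} uniformly in $t$.
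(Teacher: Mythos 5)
Your reduction to the error equation with zero initial data, the pointwise bound $\|u^\tau_{tt}(t)\|_2\leqslant C\|U_0\|_{\mathbb{H}_2}$ via the integrating factor, the resulting $O(\tau)$ estimate in the energy space $\D{\p}\times\Ls$, and the whole of part (b) (density, equi-boundedness \eqref{hh1}, Proposition \ref{lp}(b), uniform decay for the tail) are all sound and part (b) matches the paper's argument. The genuine gap is exactly where you flag it: the upgrade to the $\mathbb{H}_0^0=\D{\p}\times\D{\p}$ norm, i.e.\ the bound $\|\p x^\tau_t(t)\|_2^2=O(\tau)$. Your proposed fix via analyticity of $T(t)$ and a singular convolution estimate is only sketched, and its critical step does not close with the regularity you have: after integrating by parts you must beat the kernel singularity $(t-s)^{-3/2}$ of $(-\mathcal{A})^{3/2}T(t-s)$ using the time-increment of $\tau u^\tau_{tt}$, but the only available control on $u^\tau_{ttt}$ is $\sqrt{\tau}\,u^\tau_{ttt}\in L^2(0,T;\Ls)$ with $O(1)$ norm, so $\|\tau(u^\tau_{tt}(s)-u^\tau_{tt}(t))\|_2\lesssim \sqrt{\tau}\,|t-s|^{1/2}$ and the convolution integral $\int_0^t(t-s)^{-3/2}|t-s|^{1/2}\,ds$ is logarithmically divergent. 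So as written, part (a) is not proved in the claimed topology.

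The missing idea is the paper's Step 3: differentiate \eqref{MGTtau} in time, observe that $V^\tau=U^\tau_t$ solves the same system with datum $\mathcal{A}^\tau U_0$, and apply the energy identity \eqref{en} to the differentiated system. Since $U_0\in\mathbb{H}_2$ one can read $\tau u^\tau_{ttt}(0)$ off the equation and obtain the \emph{time-integrated} bound $\gamma^\tau\tau\int_0^T\|u^\tau_{ttt}(s)\|_2^2\,ds\leqslant C\|U_0\|^2_{\mathbb{H}_2}$, uniformly in $\tau$. With this in hand there is no need to integrate by parts in time at all: testing the error equation directly with $Ax^\tau_t$, the singular term is absorbed as $\tau|(u^\tau_{ttt},Ax^\tau_t)|\leqslant \tfrac{\delta}{2}\|Ax^\tau_t\|_2^2+\tfrac{\tau}{2\delta}\|\sqrt{\tau}u^\tau_{ttt}\|_2^2$, the first piece into the dissipation and the second integrating to $O(\tau)\|U_0\|^2_{\mathbb{H}_2}$. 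This is precisely the step you declared impossible for the higher-order multiplier; it fails only in your integrated-by-parts form (where it would indeed require $\tau\int\|\p u^\tau_{tt}\|_2^2$, which is merely $O(1)$), not in the direct form once the $u^\tau_{ttt}$ bound is available. Incorporating that lemma, your argument collapses to the paper's and the analytic-semigroup machinery becomes unnecessary.
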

 
 \begin{rmk}
 	Note that Theorem \ref{cr}  pertains to uniform (in time) strong convergence on infinite time  horizon. This fact is essential  in infinite horizon optimal control theory \cite{bible}.
 \end{rmk}
 \begin{rmk}
A standard tool for proving strong convergence of semigroups is Trotter-Kato Theorem \cite{kato}. However, this approach  does not apply to the problem under consideration due to  the singularity of the  family of generators. A consistency requirement (convergence of  the resolvents) is problematic due to specific framework where  the family of $\A^{\tau} $   becomes singular when $\tau =0 $. More refined approach applicable to this particular framework  will be developed.   
 \end{rmk}

 	The Theorem \ref{cr} provides the  information about  strong  convergence of the solution $u^\tau$ and its first derivative in time.  Regarding the second time derivative, we have the following.

 	 \begin{prop}\label{sd}
 		Let $U_0 \in \mathbb{H}_0$, then we have $$\tau^{1/2} u^\tau_{tt} \to 0 \ \mbox{weakly}^\ast \ \mbox{in} \ L^\infty(0,\infty; \Ls).$$
 	\end{prop}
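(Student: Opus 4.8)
The plan is to read off weak$^\ast$ boundedness from the uniform energy estimate of Theorem \ref{wp-apb-he}, and then to identify the limit as zero by integrating by parts in time so that the explicit factor $\tau^{1/2}$ annihilates an otherwise bounded pairing. First I would record the bound. Writing $U^\tau(t) = T^\tau(t)U_0 = (u^\tau, u^\tau_t, u^\tau_{tt})$ and isolating the third component of the weighted norm $\|\cdot\|\Han$, Theorem \ref{wp-apb-he} yields
$$\tau\,\|u^\tau_{tt}(t)\|_2^2 \leqslant \|U^\tau(t)\|\Han^2 \leqslant M^2 e^{-2\omega t}\|U_0\|\Han^2.$$
Since $\|U_0\|\Han^2 = \|u_0\|_{\D{\p}}^2 + \|u_1\|_{\D{\p}}^2 + \tau\|u_2\|_2^2$ stays bounded uniformly for $\tau \in (0,\tau_0]$ whenever $U_0 \in \Ha$, this produces a constant $C$ independent of $\tau$ and $t$ with $\|\tau^{1/2}u^\tau_{tt}(t)\|_2 \leqslant C$. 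Hence $\{\tau^{1/2}u^\tau_{tt}\}_\tau$ is bounded in $L^\infty(0,\infty;\Ls)$, the dual of $L^1(0,\infty;\Ls)$, so it remains only to pin down its weak$^\ast$ limit.

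Next I would test against the dense class of functions $\phi(t)\psi(x)$ with $\phi \in C_c^\infty((0,\infty))$ and $\psi \in \Ls$. For each fixed $\tau>0$ the function $u^\tau$ is a strong solution furnished by the $C_0$-group of Theorem \ref{wp-apb}, so integration by parts in time is legitimate and the boundary contributions vanish, giving
$$\int_0^\infty \langle \tau^{1/2}u^\tau_{tt}(t),\psi\rangle\,\phi(t)\,dt = \tau^{1/2}\int_0^\infty \langle u^\tau(t),\psi\rangle\,\phi''(t)\,dt.$$
The same energy estimate bounds $\|u^\tau(t)\|_2 \leqslant c_0\|u^\tau(t)\|_{\D{\p}} \leqslant c_0 M e^{-\omega t}\|U_0\|\Han$ uniformly in $t$ and $\tau$, so the right-hand integral is bounded independently of $\tau$ (recall $\phi''$ has compact support). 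Multiplying by $\tau^{1/2}\to 0$ then drives the left-hand pairing to zero.

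Finally I would upgrade from this dense class to all of $L^1(0,\infty;\Ls)$: given $g$ in that space and $\ep>0$, choose a finite combination $g'$ of the above products with $\|g-g'\|_{L^1(0,\infty;\Ls)}<\ep$ and estimate $|\langle \tau^{1/2}u^\tau_{tt},g\rangle| \leqslant |\langle \tau^{1/2}u^\tau_{tt},g'\rangle| + C\ep$; letting $\tau\to 0$ and then $\ep\to 0$ yields the asserted weak$^\ast$ convergence, with no passage to a subsequence required. The only delicate point is the justification of the time integration by parts, which rests on the strong regularity of the $\tau$-solution for fixed $\tau>0$; this is where I expect to focus the argument, although it is routine once the group generation of Theorem \ref{wp-apb} is invoked.
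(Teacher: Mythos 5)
Your argument is correct and uses the same mechanism as the paper's proof: the explicit factor $\tau^{1/2}$ multiplies a quantity that, tested against time-dependent test functions (equivalently, viewed in a negative Sobolev space in $t$), is bounded uniformly in $\tau$, which forces the weak$^\ast$ limit to vanish. The differences are cosmetic: the paper extracts subsequential weak$^\ast$ limits of $u^\tau_t$ and $\tau^{1/2}u^\tau_{tt}$ on $(0,T)$ and identifies the limit as zero by uniqueness of limits in $H^{-1}(0,T;\Ls)$, whereas you integrate by parts twice onto the test function and close with a density argument in $L^1(0,\infty;\Ls)$, which has the small advantages of avoiding subsequence extraction and of covering the infinite time horizon stated in the proposition directly.
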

 	
 	\subsection{ Spectral Analysis and Comparison Between  $\sigma(\A^\tau)$ and $\sigma(\A)$}
 	
 	\hspace{.5cm} Recall that  $A$  is assumed to be a positive self-adjoint operator  with compact resolvent defined on a infinite-dimensional Hilbert space  H ($L^2(\Omega)$ for instance). This allow us to infer that the spectrum of $A$ consists purely of the point spectrum. Moreover, it is countable and positive. In other words: $$\sigma(A) = \sigma_p(A) = \{\mu_n\}_{n\in \mathbb{N}} \subset \mathbb{R}_+^\ast$$ and $\mu_n \to \infty$ as $n\to \infty.$
 	
 	We begin with the characterization of the  spectrum of $\A,$ the operator corresponding to the limit problem.  See Figure \ref{fig:spec_sorder}.
 	
 		\begin{prop}\label{spec2}\emph{(a)} The residual spectrum is empty: $\sigma_r(\mathcal{A}) = \emptyset.$
 		
 		\medskip
 		
 		\emph{(b)} The continuous spectrum consists of one single real value: $\sigma_c(\mathcal{A}) = \left\{-\dfrac{c^2}{\delta}\right\}.$
 		
 		\medskip
 		
 		\emph{(c)} The point spectrum is given by $$\sigma_p(\mathcal{A}) = \left\{\lambda \in \mathbb{C}; \alpha \lambda^2 + \delta \mu_n \lambda + c^2 \mu_n = 0, \ n\in \mathbb{N}\right\} =\left\{\lambda_n^{0},\lambda_n^{1}, n \in \mathbb{N}\right\},$$ where \emph{Re}$(\lambda_n^{i}) \in \mathbb{R}_-^\ast$ for all $n \in \mathbb{N}$ and $i = 1,2.$ Moreover, both branches are eventually real and the following limits hold: $$\lim\limits_{n\to \infty} \lambda_n^{0} = -\dfrac{c^2}{\delta} \ \mbox{and} \ \lim\limits_{n\to \infty} \lambda_n^{1} = -\infty.$$
 	\end{prop}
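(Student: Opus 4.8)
The plan is to exploit that $A$ is positive, self-adjoint with compact resolvent, so there is an orthonormal eigenbasis $\{e_n\}$ of $H$ with $Ae_n=\mu_n e_n$ and $\mu_n\to\infty$. Expanding elements of $\mathbb{H}_0^0$ in this basis diagonalizes $A$ and reduces $\A$ to a block operator whose $n$-th block acts on $\mathrm{span}\{(e_n,0),(0,e_n)\}$ as the $2\times2$ matrix $\bigl(\begin{smallmatrix}0&1\\-c^2\alpha^{-1}\mu_n&-\delta\alpha^{-1}\mu_n\end{smallmatrix}\bigr)$. All three assertions will then follow from analyzing this family of blocks together with its behavior as $n\to\infty$.

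For part (c) I would solve $\A(u,v)=\lambda(u,v)$ directly: the first row forces $v=\lambda u$ and the second gives $(c^2+\delta\lambda)Au=-\alpha\lambda^2u$, so any eigenvector must be a multiple of some $e_n$ and $\lambda$ must be a root of the characteristic quadratic $\alpha\lambda^2+\delta\mu_n\lambda+c^2\mu_n=0$; conversely each root yields the eigenvector $(e_n,\lambda e_n)$. This identifies $\sigma_p(\A)$ with the two branches $\lambda_n^{0,1}=(2\alpha)^{-1}\bigl(-\delta\mu_n\pm\sqrt{\delta^2\mu_n^2-4\alpha c^2\mu_n}\bigr)$. Since the quadratic has positive coefficients, the sum of roots $-\delta\mu_n/\alpha$ is negative and the product $c^2\mu_n/\alpha$ is positive; hence both real roots are negative, and in the complex-conjugate case (occurring only for the finitely many $n$ with $\mu_n<4\alpha c^2/\delta^2$) the common real part equals $-\delta\mu_n/(2\alpha)<0$. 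This gives $\Re(\lambda_n^i)<0$, and since $\mu_n\to\infty$ the discriminant is eventually positive, so both branches are eventually real. Expanding the square root then yields $\lambda_n^0\to-c^2/\delta$ and $\lambda_n^1\sim-\delta\mu_n/\alpha\to-\infty$.

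For (a) and (b) I would analyze the resolvent equation $(\lambda I-\A)(u,v)=(f,g)$ in Fourier coordinates. Eliminating $v=\lambda u-f$ leaves, for each $n$,
\[
p_n(\lambda)\,u_n=\alpha(g_n+\lambda f_n)+\delta\mu_n f_n,\qquad p_n(\lambda):=\alpha\lambda^2+\delta\mu_n\lambda+c^2\mu_n,
\]
so $u_n$ is uniquely determined whenever $\lambda\neq\lambda_n^i$; one then recovers $v=\lambda u-f$ and verifies the domain condition $c^2u+\delta v\in\D{A^{3/2}}$. Writing the first component in the $\D{\p}$ norm, the relevant multipliers are $\alpha/p_n$ acting on $\mu_n^{1/2}g_n$ and $(\alpha\lambda+\delta\mu_n)/p_n$ acting on $\mu_n^{1/2}f_n$; since $p_n\sim(c^2+\delta\lambda)\mu_n$ and, for $\lambda$ off the root set, $\inf_n|p_n|>0$, both sequences are bounded precisely when $\lambda\neq-c^2/\delta$. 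This shows $\rho(\A)=\mathbb{C}\setminus(\{\lambda_n^i\}\cup\{-c^2/\delta\})$. At $\lambda=-c^2/\delta$ one has $c^2+\delta\lambda=0$, so $p_n\equiv\alpha\lambda^2$ is constant while $\alpha\lambda+\delta\mu_n\to\infty$: the $f$-multiplier blows up, so $-c^2/\delta\in\sigma(\A)$. It is not an eigenvalue (the quadratic never vanishes there, since $-\alpha\lambda^2\neq0$), the operator $\lambda I-\A$ is injective, and its range is dense because it contains all sufficiently regular data; hence $-c^2/\delta\in\sigma_c(\A)$, proving (b). Finally (a) is immediate by elimination: we have shown $\sigma(\A)=\{\lambda_n^i\}\cup\{-c^2/\delta\}$, the first set consists of genuine eigenvalues and the last point lies in $\sigma_c(\A)$, leaving $\sigma_r(\A)=\emptyset$.

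I expect the main obstacle to be the resolvent bookkeeping in the third step: correctly tracking the fractional weights $\mu_n^{1/2}$ and $\mu_n^{3/2}$ that arise from the norms of $\D{\p}$ and from membership in $\D{\A}$, verifying that the unique candidate $(u,v)$ genuinely lands in $\D{\A}$ for every $\lambda$ in the claimed resolvent set, and isolating the single multiplier responsible for the loss of boundedness exactly at $\lambda=-c^2/\delta$.
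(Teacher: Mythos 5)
Your proposal is correct, and for part (c) it coincides with the paper's argument (solve $\A\varphi=\lambda\varphi$, reduce to the quadratic $\alpha\lambda^2+\delta\mu_n\lambda+c^2\mu_n=0$, and read off signs and asymptotics of the two branches). For parts (a) and (b), however, you take a genuinely different route. The paper proves (a) by computing the adjoint $\A^\ast$ with respect to the weighted $\D{\p}\times\D{\p}$ inner product, checking $\sigma_p(\A^\ast)=\sigma_p(\A)$, and invoking the fact that $\lambda\in\sigma_r(\A)$ forces $\overline{\lambda}\in\sigma_p(\A^\ast)$; for (b) it obtains $-c^2/\delta\in\sigma(\A)$ as a limit point of eigenvalues (closedness of the spectrum) and places every other non-eigenvalue in $\rho(\A)$ by rewriting the resolvent equation as $u+\tfrac{\alpha\lambda^2}{c^2+\delta\lambda}A^{-1}u=F$ and applying the Fredholm alternative for the compact operator $A^{-1}$ on $\D{\p}$. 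You instead diagonalize $A$ and determine $\rho(\A)$ directly from the explicit multipliers $\alpha/p_n$ and $(\alpha\lambda+\delta\mu_n)/p_n$, which identifies $\sigma(\A)$ exactly and yields (a) by elimination and (b) from injectivity, dense range, and unboundedness of the formal inverse at $\lambda=-c^2/\delta$. Your version is more elementary, dispenses with both the adjoint computation and the compactness argument, and gives quantitative resolvent bounds for free; the paper's version does not require $A$ to be diagonalized and so transfers more readily to non-self-adjoint or non-diagonal perturbations (and is the template reused for $\A^\tau$ in Proposition \ref{spec1}). Two small points to tighten: the condition ensuring $\inf_n|p_n(\lambda)|>0$ is that $\lambda$ avoid the \emph{closure} of the root set, i.e.\ also $\lambda\neq-c^2/\delta$, since that point is the unique finite accumulation point of the $\lambda_n^0$; and to certify $-c^2/\delta\in\sigma(\A)$ you should exhibit the blow-up on normalized data, e.g.\ $f=\mu_n^{-1/2}e_n$, $g=0$, for which $\|u\|_{\D{\p}}=|(\alpha\lambda+\delta\mu_n)/p_n(\lambda)|\to\infty$.
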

 
 \
 	
 	Regarding the spectrum of $\mathcal{A}^\tau$ we have, see Figure \ref{fig:third_fix} for each $\tau > 0.$
 	 
 	 \medskip
 	
 	\begin{prop}\label{spec1} \emph{(a)} The residual spectrum is empty $\sigma_r(\mathcal{A}^\tau) = \emptyset$ for all $\tau >0.$
 		
 		\medskip
 		
 		\emph{(b)} The continuous spectrum is either empty or consists of a single real value: $$\sigma_c(\mathcal{A}^\tau) = \begin{cases}
 		\left\{-\dfrac{c^2}{b^\tau}\right\} \ &\mbox{if} \ \gamma^\tau > 0, \\
 		\emptyset \ &\mbox{if} \ \gamma^\tau = 0,
 		\end{cases}$$
 		where $\gamma^\tau \equiv \alpha - c^2\tau (b^\tau)^{-1}.$
 		
 		\medskip
 		
 		\emph{(c)} The point spectrum is given by $$\sigma_p(\mathcal{A}^\tau) = \{\lambda  \in \mathbb{C}; \ \tau\lambda^3 + \alpha \lambda^2 + b^\tau \mu_n \lambda + c^2 \mu_n = 0, \ n \in \mathbb{N}\} = \left\{\lambda_n^{0,\tau},\lambda_n^{1,\tau},\lambda_n^{2,\tau}, n \in \mathbb{N}\right\}.$$ One of the branches, say $\lambda_n^{0,\tau}$, is eventually real while the other two branches are eventually complex, conjugate of each other and the following limits hold: $$\lim\limits_{n\to \infty} \lambda_n^{0,\tau} = -\dfrac{c^2}{b^\tau}, \ \lim\limits_{n\to \infty} \mbox{\emph{Re}}(\lambda_n^{1,\tau}) = -\dfrac{\gamma^\tau}{2\tau} \ \mbox{and} \lim\limits_{n\to \infty} |\mbox{\emph{Img}}(\lambda_n^{1,\tau})| = \infty.$$
 	\end{prop}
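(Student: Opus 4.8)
The plan is to reduce the resolvent problem for $\A^\tau$ to a scalar analysis of a family of cubics obtained by diagonalizing over the eigenbasis of $A$. First I would write the resolvent equation $(\lambda - \A^\tau)U = F$ with $U=(u,v,w)$, $F=(f,g,h)$, eliminate $v = \lambda u - f$ and $w = \lambda^2 u - \lambda f - g$ from the first two rows, and reduce the third to the operator identity $P(\lambda)u = \tau h + (\tau\lambda^2+\alpha\lambda)f + (\tau\lambda+\alpha)g + b^\tau A f$, where $P(\lambda) \equiv (\tau\lambda^3 + \alpha\lambda^2)I + (c^2 + b^\tau\lambda)A$. Expanding in an orthonormal eigenbasis $\{e_n\}$ of $A$ (with $A e_n = \mu_n e_n$), the $n$-th coefficient obeys $p_n(\lambda)u_n = (\text{RHS})_n$ with $p_n(\lambda) = \tau\lambda^3 + \alpha\lambda^2 + b^\tau\mu_n\lambda + c^2\mu_n$. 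This already yields part (c): $\lambda \in \sigma_p(\A^\tau)$ exactly when $p_n(\lambda)=0$ for some $n$, the eigenvector being built from $e_n$, so the three branches are precisely the three roots of each cubic.

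To obtain the asymptotics I would use Vieta's relations for $p_n$, namely that the three roots sum to $-\alpha/\tau$, their pairwise products sum to $b^\tau\mu_n/\tau$, and their product equals $-c^2\mu_n/\tau$. Dividing $p_n$ by $\mu_n$ and letting $\mu_n\to\infty$ shows one branch stays bounded and converges to the root of $c^2 + b^\tau\lambda = 0$, i.e. $\lambda_n^{0,\tau}\to -c^2/b^\tau$. The remaining two branches are unbounded: a dominant-balance argument $\tau\lambda^2 \approx -b^\tau\mu_n$ forces their modulus to blow up, and since the coefficients are real they must eventually form a complex-conjugate pair, so one branch is eventually real and two eventually complex. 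Subtracting the bounded branch from the sum gives $\lambda_n^{1,\tau}+\lambda_n^{2,\tau}\to -\alpha/\tau + c^2/b^\tau = -\gamma^\tau/\tau$, hence $\Re(\lambda_n^{1,\tau})\to -\gamma^\tau/(2\tau)$; and $|\lambda_n^{1,\tau}|^2 = \lambda_n^{1,\tau}\lambda_n^{2,\tau}\sim b^\tau\mu_n/\tau\to\infty$ with bounded real part forces $|\Im(\lambda_n^{1,\tau})|\to\infty$. Eventual reality versus complexity of the branches follows from the sign of the discriminant of $p_n$ for large $\mu_n$.

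For part (a) I would show $\sigma_r(\A^\tau)=\emptyset$ directly: if $\lambda\notin\sigma_p$ then $p_n(\lambda)\neq 0$ for every $n$, so for each $F$ whose components $f,g,h$ lie in the finite linear span of $\{e_n\}$ the componentwise formula produces a genuine finite-support solution $U\in\Ha$; such $F$ are dense, the range is dense, and $\lambda\notin\sigma_r$. Since $\sigma_r$ is disjoint from $\sigma_p$, this gives $\sigma_r=\emptyset$. For part (b) the continuous spectrum is $\sigma(\A^\tau)\setminus\sigma_p(\A^\tau)$, identified by testing boundedness of the resolvent on $\Ha$. The delicate term in $u_n$ is $b^\tau\mu_n f_n/p_n(\lambda)$: because $p_n(\lambda)/\mu_n\to c^2+b^\tau\lambda$, this ratio stays bounded whenever $c^2+b^\tau\lambda\neq 0$, and then the weighted $\Ha$-norm of $U$ is controlled by that of $F$, placing such $\lambda$ in $\rho(\A^\tau)$. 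At the single value $\lambda=-c^2/b^\tau$ the factor $c^2+b^\tau\lambda$ vanishes and $p_n$ collapses to the $n$-independent constant $\lambda^2\gamma^\tau$; when $\gamma^\tau>0$ this is nonzero so $\lambda\notin\sigma_p$, yet the term $b^\tau\mu_n f_n/(\lambda^2\gamma^\tau)$ forces $\|u\|_{\D{\p}}^2\sim\sum\mu_n^3|f_n|^2$, which is not controlled by $\|F\|_{\Ha}^2\sim\sum\mu_n|f_n|^2$ — a one-derivative loss rendering the resolvent unbounded, so $\sigma_c(\A^\tau)=\{-c^2/b^\tau\}$. When $\gamma^\tau=0$, instead $p_n(-c^2/b^\tau)=\lambda^2\gamma^\tau=0$ for every $n$, so $-c^2/b^\tau$ is a common eigenvalue of all blocks (of infinite multiplicity), absorbed into $\sigma_p$, leaving $\sigma_c=\emptyset$.

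The main obstacle is this last step: rigorously separating continuous spectrum from resolvent set at the critical value $\lambda=-c^2/b^\tau$. This demands the precise weighted estimates exhibiting the derivative-loss phenomenon — that $\lambda-\A^\tau$ is injective with dense range there but has unbounded inverse — rather than the routine bookkeeping valid for the remaining $\lambda$. The overall structure parallels the quadratic analysis behind Proposition \ref{spec2}, but the cubic introduces the extra (eventually complex) branch and makes the $\tau$-dependent balance $\gamma^\tau$ the quantity that decides whether the critical value is continuous or point spectrum.
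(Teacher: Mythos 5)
Your proposal is correct, and it reaches all three parts by a route that is genuinely different from (and in places more self-contained than) the paper's. For part (a) the paper computes $(\A^\tau)^\ast$, shows $\sigma_p((\A^\tau)^\ast)=\sigma_p(\A^\tau)$, and uses the characterization of $\sigma_r$ via the adjoint's point spectrum; you instead get dense range directly from the componentwise solution formula on finitely supported data. For part (b) the paper does not prove membership of $-c^2/b^\tau$ in the spectrum from scratch: it cites \cite{TrigMGT} to assert that this value is an eigenvalue when $\gamma^\tau=0$ and a limit of eigenvalues (hence in the closed spectrum, hence continuous) when $\gamma^\tau>0$, and then, exactly as you do, verifies that every other non-eigenvalue $\lambda$ is in the resolvent set via the reduction $u+d(\lambda,\tau)A^{-1}u=F$ and compactness of $A^{-1}$. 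Your replacement — the observation that $p_n(-c^2/b^\tau)=\lambda^2\gamma^\tau$ is independent of $n$, so that one either gets an infinite-multiplicity eigenvalue ($\gamma^\tau=0$) or a derivative-losing, unbounded inverse ($\gamma^\tau>0$) — makes the argument self-contained and actually exhibits the mechanism; you would just need to write the weighted estimate $\sum\mu_n^3|f_n|^2$ versus $\sum\mu_n|f_n|^2$ carefully, and justify the bounded branch's existence (a Rouché argument on $p_n/\mu_n$ near $-c^2/b^\tau$ does this cleanly). For part (c) the paper performs an approximate factorization $p_n(\lambda)\approx(\lambda+c^2/b^\tau-\theta_n)q_n(\lambda)$ with a correction $\theta_n$ and reads the asymptotics off the quadratic factor $q_n$; your Vieta's-formulas argument (bounded sum of the two unbounded roots equal to $-\gamma^\tau/\tau$ in the limit, product $\sim b^\tau\mu_n/\tau\to\infty$, discriminant eventually negative) is shorter and avoids the paper's informal ``$\approx$'' manipulations. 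Both approaches deliver the same limits; yours trades the explicit near-factorization for elementary symmetric-function identities, which is arguably easier to make fully rigorous.
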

 \begin{figure}[H]
 	\begin{center}
 		\includegraphics[scale=.15]{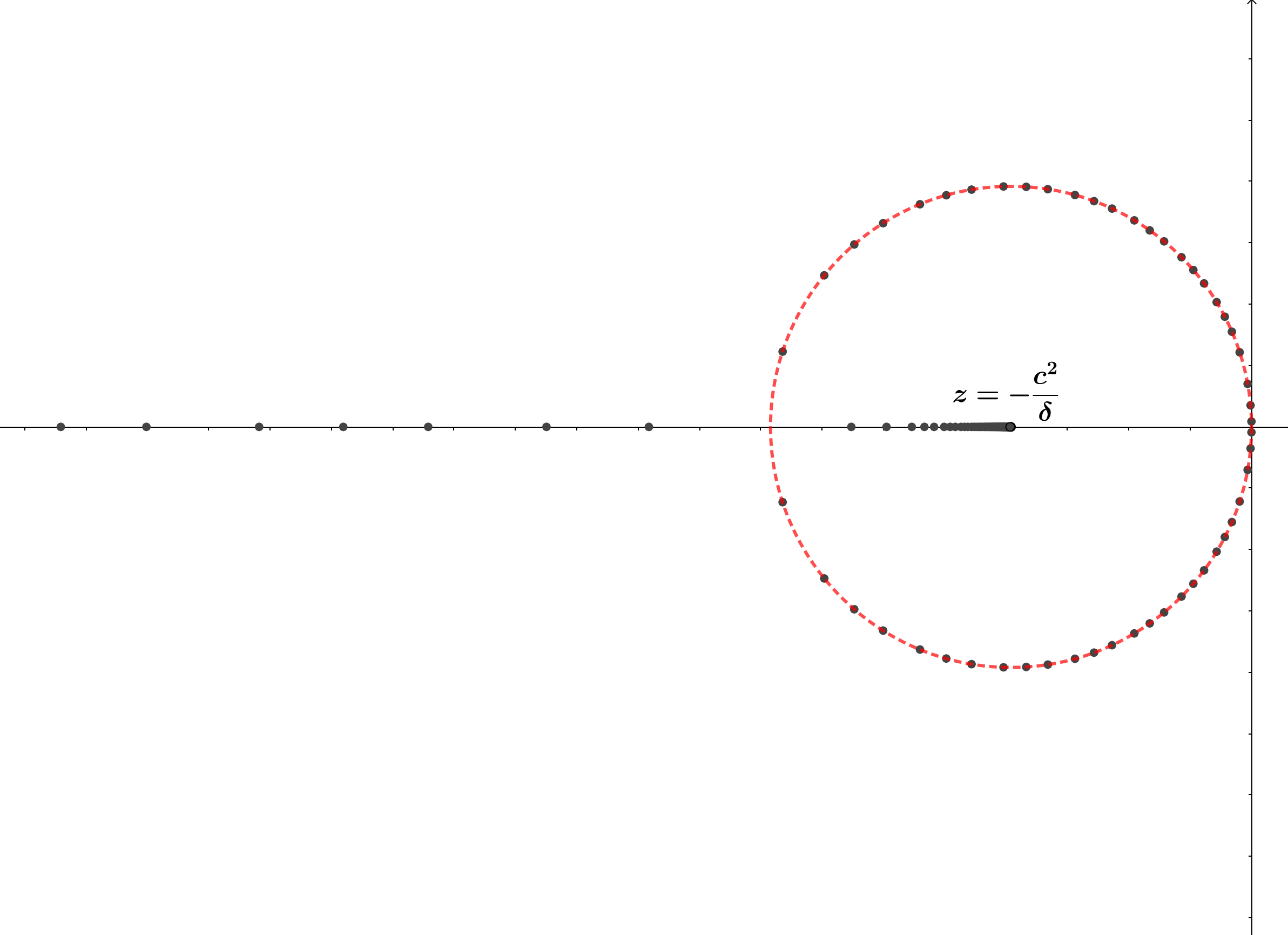}
 		\caption{{\bf Graphical representation of $\sigma_p(\A)$ and $\sigma_c(\A)$}: the circle in red is centered at $\left(-\frac{c^2}{\delta},0\right)$ and has radius $r=\frac{c^2}{\delta}$. The shape of the eigenvalues is represented by the curve of black dots. Clearly, the bigger is the radius, the bigger is number of complex roots. Nevertheless, the asymptotic behavior of the eigenvalues are the same: when $n$ becomes sufficiently large, all the eigenvalues are real and we have one branch converging to $-\infty$ while the other converge to the point in the continuous spectrum $z = -\frac{c^2}{\delta}.$} 
 		\label{fig:spec_sorder}
 	\end{center}
 \end{figure}
The next lemma allows us to establish quantitative relation between $\sigma(\mathcal{A})$ and $\sigma(\mathcal{A}^\tau)$ for small $\tau.$
		\begin{lemma}\label{t1}
			Let $n \in \mathbb{N}$ fixed. Then, among the three roots $\{\lambda_n^{0,\tau}, \lambda_n^{1,\tau}, \lambda_n^{2, \tau}\}_{\tau >0}$ of the equation $$\tau \lambda^3 + \alpha \lambda^2 + b^\tau \mu_n \lambda + c^2 \mu_n = 0,$$ there are two converging, as $\tau \to 0$, to the two roots $\{\lambda_n^1, \lambda_n^2\}$ of the equation $$\alpha \lambda^2 + \delta \mu_n \lambda + c^2\mu_n = 0.$$
		\end{lemma}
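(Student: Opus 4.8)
The natural strategy is to treat this as a singular perturbation of the roots of a polynomial, where the genuine obstruction is that the degree drops from three to two as $\tau \to 0$: the leading coefficient $\tau$ vanishes, so the usual continuity of roots as functions of the coefficients does not apply directly (the third root escapes to infinity). The plan is to circumvent this by an argument based on Rouch\'e's theorem (equivalently Hurwitz's theorem), applied on fixed circles in the complex plane.

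First I would introduce the two polynomials $p_\tau(\lambda) = \tau\lambda^3 + \alpha\lambda^2 + b^\tau\mu_n\lambda + c^2\mu_n$ and $p_0(\lambda) = \alpha\lambda^2 + \delta\mu_n\lambda + c^2\mu_n$, and record that, since $b^\tau = \delta + \tau c^2$,
$$p_\tau(\lambda) - p_0(\lambda) = \tau\lambda^3 + \tau c^2\mu_n\lambda = \tau\lambda\bigl(\lambda^2 + c^2\mu_n\bigr),$$
so that $p_\tau \to p_0$ uniformly on every compact subset of $\mathbb{C}$, with the difference of order $\tau$. Let $\lambda_n^1,\lambda_n^2$ denote the (finite, and nonzero since $c^2\mu_n \neq 0$) roots of $p_0$.

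Next, for $\varepsilon > 0$ small enough that $p_0$ has no zeros on the boundaries of the closed disks $\overline{D}(\lambda_n^i,\varepsilon)$, I would apply Rouch\'e's theorem on each boundary circle: there $|p_0|$ is bounded below by a positive constant, while $|p_\tau - p_0| = \tau|\lambda|\,|\lambda^2 + c^2\mu_n|$ is $O(\tau)$ uniformly on the compact circle, so $|p_\tau - p_0| < |p_0|$ once $\tau$ is small. Rouch\'e then forces $p_\tau$ and $p_0$ to have the same number of zeros (counted with multiplicity) inside each disk, producing exactly two roots of $p_\tau$ in the $\varepsilon$-neighborhood of $\{\lambda_n^1,\lambda_n^2\}$ (this covers both the distinct-root and the double-root case uniformly, since multiplicities are preserved). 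As $\varepsilon$ is arbitrary, these two roots must converge to $\lambda_n^1,\lambda_n^2$ as $\tau \to 0$, which is exactly the assertion of the lemma.

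Finally, for completeness and to match Proposition \ref{spec1}, I would locate the remaining branch by the rescaling $\xi = \tau\lambda$: multiplying $p_\tau(\lambda)=0$ by $\tau^2$ gives $\xi^3 + \alpha\xi^2 + \tau b^\tau\mu_n\xi + \tau^2 c^2\mu_n = 0$, whose roots converge to those of $\xi^2(\xi + \alpha) = 0$; the double root $\xi = 0$ accounts for the two bounded branches found above, while $\xi = -\alpha$ identifies the third root as $\lambda \sim -\alpha/\tau \to -\infty$. The only delicate point in the whole argument is the degree-drop obstruction already flagged; once Rouch\'e is invoked on fixed circles the remaining estimates are entirely routine.
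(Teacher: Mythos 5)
Your proof is correct, but it takes a genuinely different route from the paper. The paper's argument is a polynomial long division: it writes the cubic as $p(\lambda)\bigl(\alpha\lambda^2+\delta\mu_n\lambda+c^2\mu_n\bigr)+q_\tau(\lambda)$ with a linear quotient $p(\lambda)=\frac{1}{\alpha^2}\bigl[\tau\alpha\lambda+\alpha^2-\tau\delta\mu_n\bigr]$ and a linear remainder $q_\tau$ whose coefficients are $O(\tau)$, and then simply observes that $q_\tau\to 0$ and $p\to 1$ pointwise as $\tau\to 0$. That observation alone does not literally finish the job — pointwise convergence of quotient and remainder must still be upgraded to convergence of roots, which requires exactly the kind of locally uniform estimate plus Rouch\'e/Hurwitz step that you supply. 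Your version makes the difference $p_\tau-p_0=\tau\lambda(\lambda^2+c^2\mu_n)$ explicit, gets the $O(\tau)$ bound uniformly on fixed circles around $\lambda_n^1,\lambda_n^2$, and invokes Rouch\'e to place exactly two roots of $p_\tau$ (with multiplicity, so the double-root case is covered) in arbitrarily small neighborhoods; the rescaling $\xi=\tau\lambda$ identifying the third root as $\sim-\alpha/\tau$ is a nice bonus that is consistent with the paper's Proposition on the spectrum of $\mathcal{A}^\tau$, though it is not needed for the lemma as stated. In short, your argument buys rigor and a clean treatment of multiplicities at the cost of invoking a standard complex-analytic tool, whereas the paper's division argument is more elementary in form but leaves the final convergence step implicit.
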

	\begin{proof}
			Write $$
			\tau \lambda^3 + \alpha \lambda^2 + b^\tau \mu_n \lambda + c^2 \mu_n = p(\lambda)(\alpha \lambda^2 + \delta \mu_n \lambda + c^2 \mu_n) + q_\tau(\lambda)$$ with $$p(\lambda) = \dfrac{1}{\alpha^2}\left[\tau\alpha\lambda + \alpha^2 - \tau\delta\mu_n\right]$$ $$q_\tau(\lambda) = \dfrac{\tau \mu_n}{\alpha^2}\left[\alpha(\alpha-1)c^2 - \delta^2\mu_n\right]\lambda+ \dfrac{\tau \mu_n^2 c^2 \delta}{\alpha^2}$$ and notice that $$\lim_{\tau \to 0} q_\tau(\lambda) = 0 \ \mbox{and} \ \lim_{\tau \to 0} p_\tau(\lambda) = 1 $$ for every $\lambda.$
	\end{proof}

The above statement implies the following corollary.
		
		\begin{cor} {\bf (Uppersemicontinuity of the spectrum)}\label{qt1} Let $\varepsilon > 0$ given. Then, for each $z^0 \in \sigma_p(\A)$ there exists $\delta = \delta_\varepsilon > 0$ and $\tau < \delta$  such that the set $$\{z^\tau \in \sigma(\A^\tau); |z^\tau - z^0| < \varepsilon\}$$ is nonempty.
	\end{cor}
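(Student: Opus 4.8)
The plan is to read the corollary directly off Lemma \ref{t1}, using only the spectral characterizations already established in Propositions \ref{spec2} and \ref{spec1}; no new analysis is required beyond careful bookkeeping of which branch converges where.

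First I would fix $z^0 \in \sigma_p(\A)$ and the given $\varepsilon > 0$. By the description of the point spectrum in Proposition \ref{spec2}(c), $z^0$ must be a root of $\alpha \lambda^2 + \delta \mu_n \lambda + c^2 \mu_n = 0$ for some fixed index $n = n(z^0) \in \mathbb{N}$. This single index is all the data I need to invoke Lemma \ref{t1}. Next I would apply that lemma with this $n$: it asserts that, among the three roots $\lambda_n^{0,\tau}, \lambda_n^{1,\tau}, \lambda_n^{2,\tau}$ of $\tau \lambda^3 + \alpha \lambda^2 + b^\tau \mu_n \lambda + c^2 \mu_n = 0$, two converge as $\tau \to 0$ to the two roots of the limiting quadratic. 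I would select the branch, call it $z^\tau$, converging to $z^0$; by Proposition \ref{spec1}(c) each $z^\tau$ lies in $\sigma_p(\A^\tau) \subset \sigma(\A^\tau)$. Since $z^\tau \to z^0$ as $\tau \to 0$, the definition of convergence furnishes $\delta = \delta_\varepsilon > 0$ with $|z^\tau - z^0| < \varepsilon$ whenever $0 < \tau < \delta_\varepsilon$. Choosing any such $\tau$ exhibits an element of $\sigma(\A^\tau)$ within $\varepsilon$ of $z^0$, so the set in the statement is nonempty, as required.

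The one step that deserves care, and the only place where Lemma \ref{t1} is doing real work, is the \emph{matching}: I must be certain that $z^0$ itself (and not merely the other finite quadratic root) is the limit of a cubic branch. If a fully self-contained justification is wanted, I would supply it through a Rouch\'e argument based on the factorization $\tau \lambda^3 + \alpha \lambda^2 + b^\tau \mu_n \lambda + c^2 \mu_n = p(\lambda)(\alpha \lambda^2 + \delta \mu_n \lambda + c^2 \mu_n) + q_\tau(\lambda)$ appearing in the proof of Lemma \ref{t1}. On a small circle $\partial B(z^0,\rho)$ chosen to exclude the other finite root and the root of $p$ (the latter escaping to $-\infty$ as $\tau \to 0$), one has $p(\lambda) \to 1$ and the quadratic factor bounded away from zero, so $|p(\lambda)(\alpha \lambda^2 + \delta \mu_n \lambda + c^2 \mu_n)|$ is bounded below while $|q_\tau(\lambda)| \to 0$ uniformly; for $\tau$ small enough the strict inequality $|q_\tau| < |p \cdot (\alpha \lambda^2 + \delta \mu_n \lambda + c^2 \mu_n)|$ holds on $\partial B(z^0,\rho)$, and Rouch\'e forces the cubic to have exactly one root inside $B(z^0,\rho)$. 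That root is precisely the branch $z^\tau$ converging to $z^0$.

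Since the assertion is pointwise in $z^0$, no uniformity in $n$ or over $\sigma_p(\A)$ is needed, which keeps the whole argument elementary; the degenerate case of a double quadratic root is only easier, as both convergent cubic branches then collapse onto the single target $z^0$.
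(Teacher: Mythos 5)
Your argument is correct and takes essentially the same route as the paper, which obtains the corollary directly from Lemma \ref{t1} together with the spectral characterizations in Propositions \ref{spec2} and \ref{spec1}: fix the index $n$ attached to $z^0$, let two of the three cubic roots converge to the two quadratic roots, and read off $\delta_\varepsilon$ from the definition of convergence. Your optional Rouch\'e step is a useful supplement rather than a different approach: the paper's proof of Lemma \ref{t1} only records that $q_\tau(\lambda)\to 0$ and $p(\lambda)\to 1$ pointwise and leaves the passage from smallness of the remainder to actual convergence of roots implicit, and your perturbation argument on $\partial B(z^0,\rho)$ supplies precisely that missing detail.
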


\begin{rmk}
	The goal of Propositition \ref{spec1} is to localize the vertical asymptote in the spectrum explicitly and support the later claim that, as $\tau$ vanishes, it becomes arbitrarily far from the imaginary axis. Notice that the proofs of Lemma \ref{t1} and part (c) of Proposition \ref{spec1} (see page \pageref{ee} [Section \ref{secc}])
	have some similarities of algebraic manipulation but have different meaning. The proof of part (c) of Proposition \ref{spec1} takes advantage of the known single-point continuous spectrum to conclude that for $n$ very large the third degree polynomial must have no more then one real root and then quantify the imaginary and real parts of the complex roots. However, the proof of Lemma \ref{t1} makes use of the quadratic structure of the point spectrum of $\A$ in order to conclude the expected approximation.
\end{rmk}

 \begin{figure}[H]	
	\begin{center}
		\includegraphics[scale=.2]{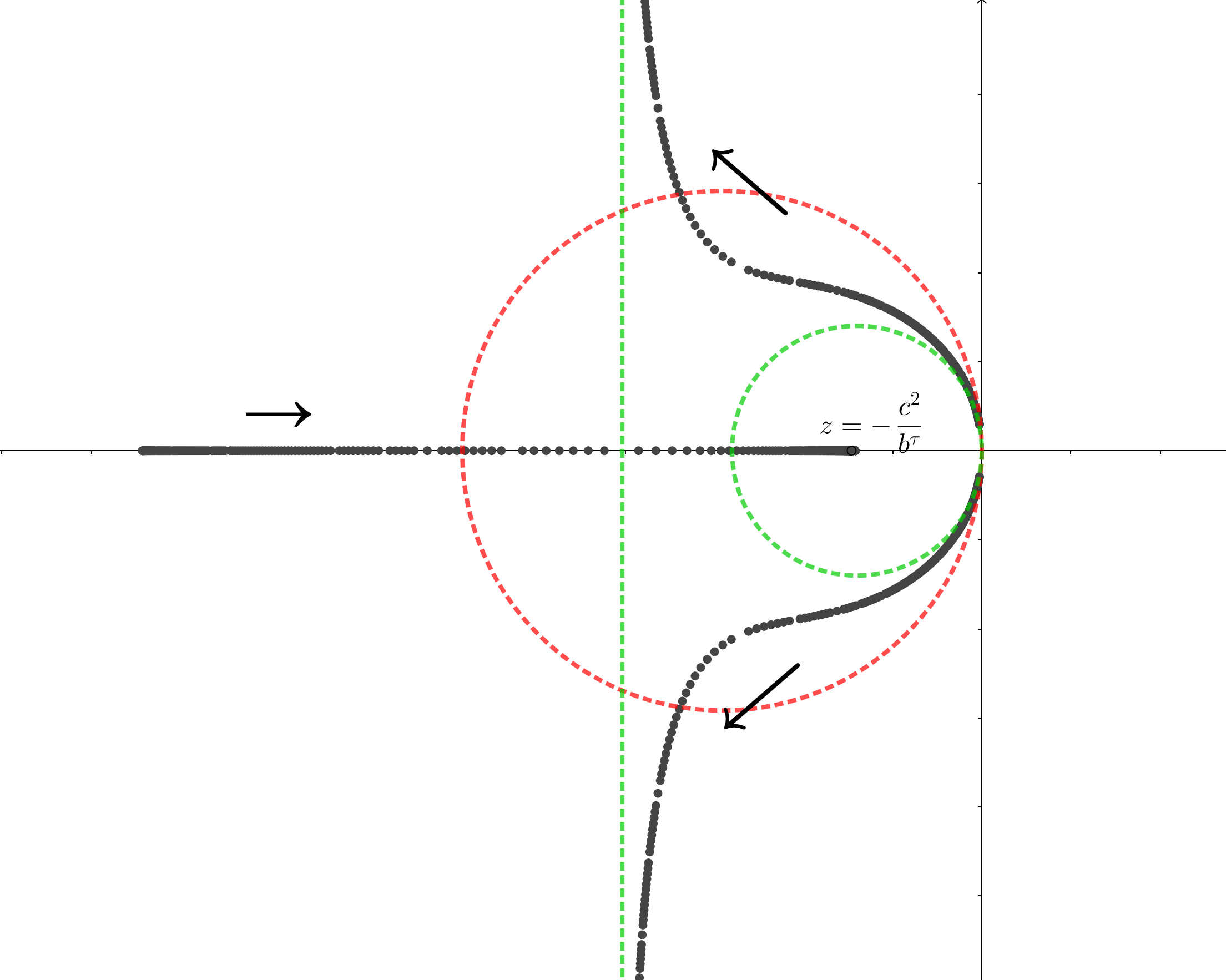}
			\caption{{\bf Graphical representation of $\sigma_p(\A^\tau)$ and $\sigma_c(\A^\tau)$ for a $\tau = \tau_0 \in (0,1]$ fixed}: the circle in red is the same as in Figure \ref{fig:spec_sorder} while the circle in green is centered at $\left(-\frac{c^2}{b^\tau},0\right)$ and has radius $r =  \frac{c^2}{b^\tau}.$ The shape of the eigenvalues is represented by the curve of black dots. The green vertical line is given by $x = -\frac{\gamma^\tau}{2\tau}$ (see Theorem \ref{spec1}). The asymptotic behavior of the eigenvalues is exactly as we described in Theorem \ref{spec1}: as $n$ becomes large, two branches of the eigenvalues have their respective real parts converging to $-\frac{\gamma^\tau}{2\tau}$ while their imaginary parts split into $\pm \infty$, and the other branch converges to the continuous spectrum, which in this case is given by the single point $z = -\frac{c^2}{b^\tau}.$}
		
		\label{fig:third_fix}
	\end{center}
\end{figure}

 \begin{figure}[H]
	\begin{center}
{\includegraphics[scale=.2]{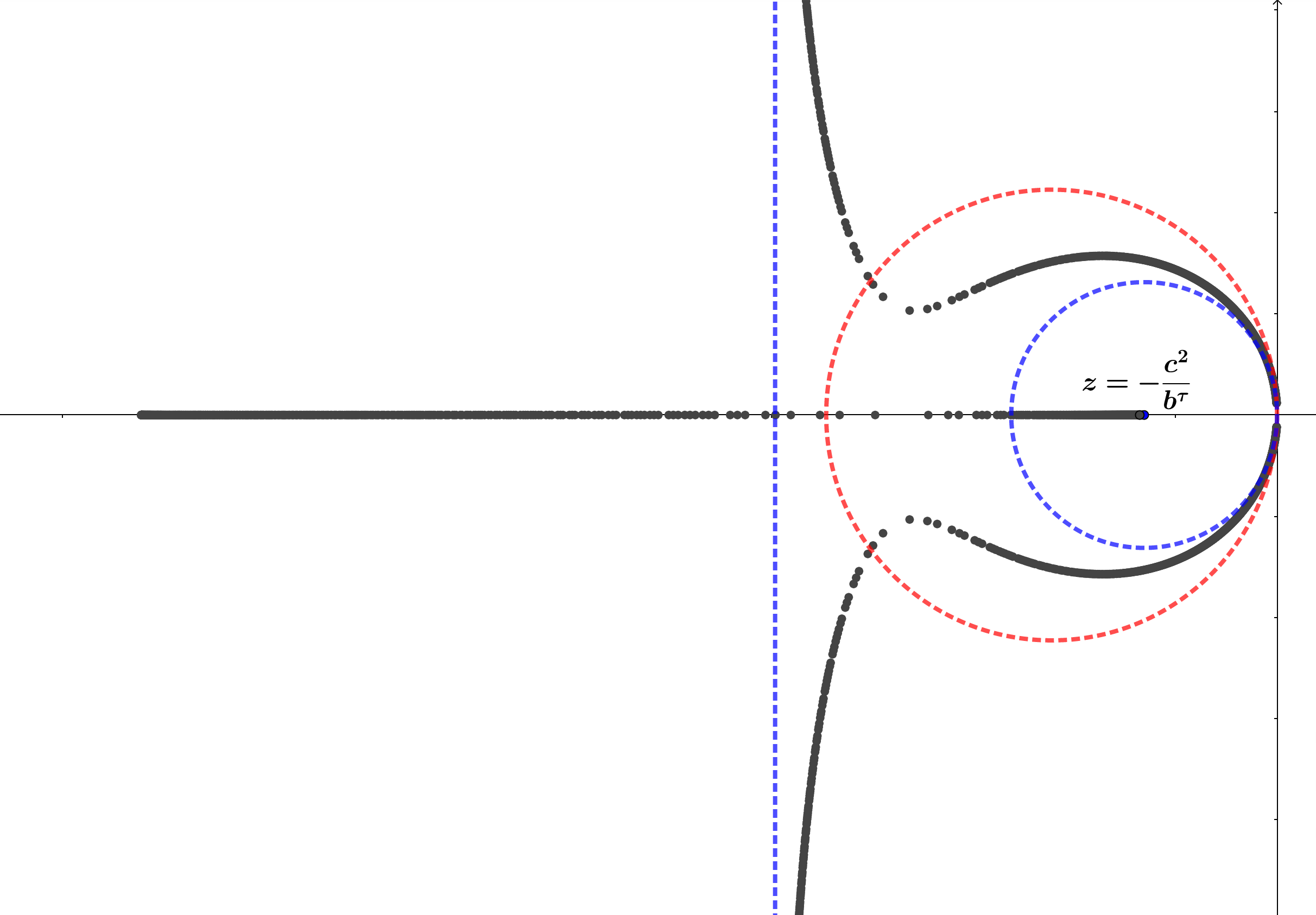}}
	\caption{{\bf Graphical representation of $\sigma_p(\A^\tau)$ and $\sigma_c(\A^\tau)$ for a $\tau = \tau_0 / 10.$}}
	\label{fig:third_meio}
\end{center}
\end{figure}

 \begin{figure}[H]
	\begin{center}
{\includegraphics[scale=.2]{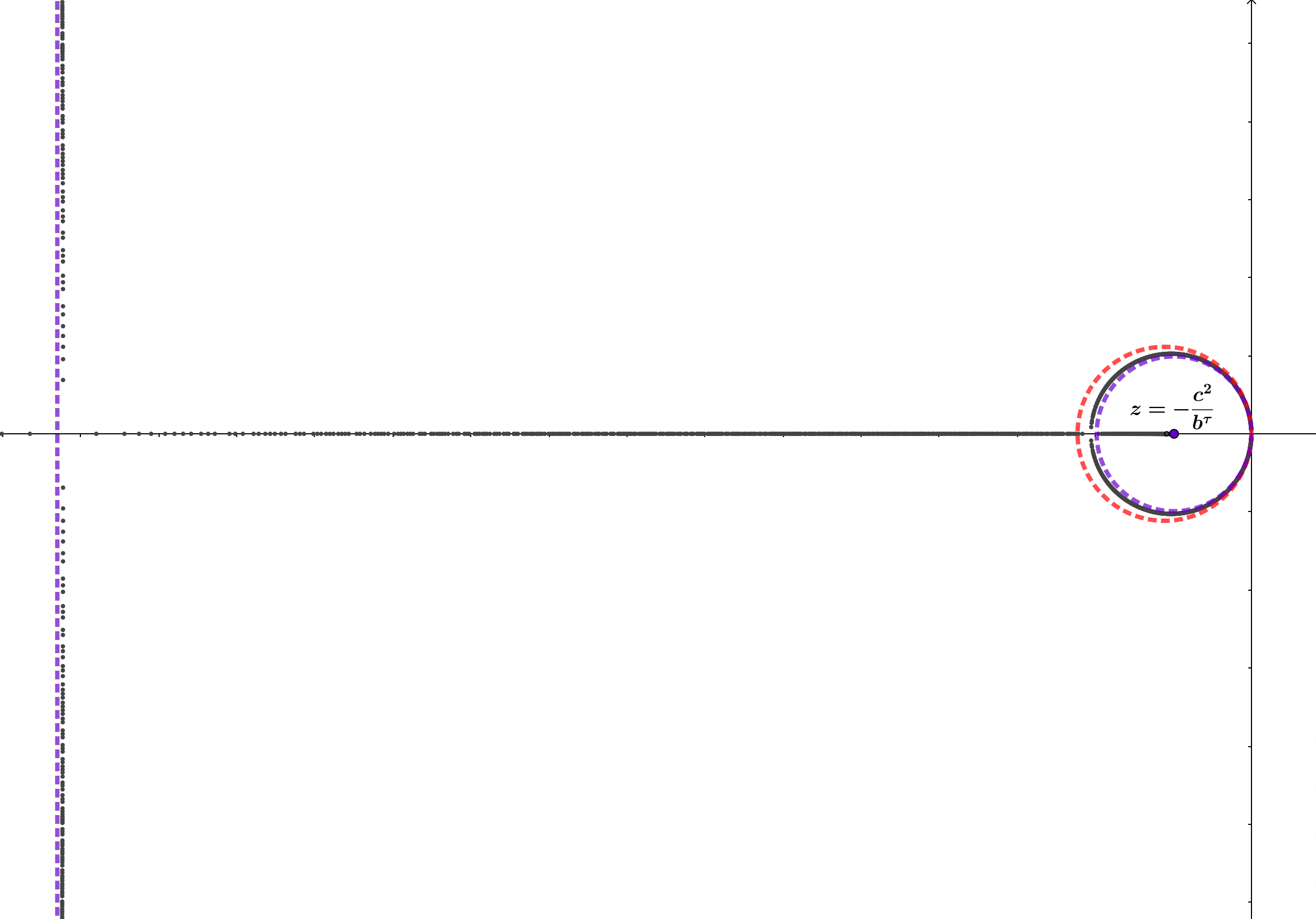}}
	\caption{{\bf Graphical representation of $\sigma_p(\A^\tau)$ and $\sigma_c(\A^\tau)$ for a $\tau=\tau_0/100:$} It is important to observe how the ``vertical" spectrum escapes to ~$-\infty$ as $\tau$ becomes small.}
	\label{fig:third_longe}
	

\end{center}
\end{figure}	\noindent The remaining part of the paper is devoted to the proofs of the main results.
\section{Proofs}

\subsection{Proof of Theorem \ref{wp-apb-he}}

\noindent  {\bf Part I: Equi-boundedness of the groups}

Let $u^\tau$ be the solution for \eqref{MGTtau} and consider the energy functional $E^\tau(t)$ defined as $$E^\tau(t) = E_0^\tau(t) + E_1^\tau(t),$$ where  \begin{equation}
\label{ef0} E_0^\tau(t) = \dfrac{\alpha}{2}\|u^\tau_t(t)\|_2^2+ \dfrac{c^2}{2}\|\p u^\tau(t)\|_2^2
\end{equation} and \begin{equation}\label{ef1}
E_1^\tau(t) = \dfrac{b^\tau}{2}\left\|\p\left(u^\tau_t(t)+\dfrac{c^2}{b^\tau}u^\tau(t)\right)\right\|_2^2 + \dfrac{\tau}{2}\left\|u^\tau_{tt}(t)+\dfrac{c^2}{b^\tau}u^\tau_t(t)\right\|_2^2 + \dfrac{c^2\gamma^\tau}{2b^\tau}\|u^\tau_t(t)\|_2^2.
\end{equation}
The following differential  identity can be derived for the functional $E_1^{\tau}(t) $ :
\begin{equation}\label{en}
\dfrac{d}{dt}E_1^\tau(t) + \gamma^\tau \|u^\tau_{tt}(t)\|_2^2 = 0,
\end{equation}
where $\gamma^\tau=\alpha - c^2\tau (b^\tau)^{-1} $.
The proof of (\ref{en}) follows  along the same lines as in Lemma 3.1  \cite{jmgt} . The equality is derived first for smooth solution and then extended by density to the  ``energy'' level solutions. For readers convenience the details of the derivation are given in the Appendix. 
Moreover, for each fixed  value of $\tau > 0,$ the authors in  \cite{jmgt} establish   exponential decay (with decay rates depending on $\tau$)   provided $\gamma^\tau 
 > 0$.  

We aim to prove that the family of semigroups $\mathcal{F}$ is equi-bounded in $\tau.$  In other words, there exists $\tau_0$ small enough such that if we consider $\tau \in (0,\tau_0],$ we can provide an uniform (in $\tau$) bound for the norm of the solutions in $(\Ha,\|\cdot\|\Han) = \Ha^{\tau}$. To achieve this, we  establish  first  topological equivalence 
of energy function with respect to the topology defined on $\Ha^{\tau} $. This is given in the lemma to follow. 


\begin{lemma}\label{s1}
	Let $U_0 \in \mathbb{H}_0$ and define $\tau_0 \equiv \inf\{c >0; \gamma^\tau > 0 \ \mbox{for all} \ \tau \in (0,c]\}>0$. Then there exist $k = k(\tau_0), K=K(\tau_0)>0$ such that 
\begin{equation}\label{equivine}k\|T^\tau(t)U_0\|^2\Han \leqslant E^\tau(t) \leqslant K\|T^\tau(t)U_0\|^2\Han, \ \tau \in \left(0,\tau_0\right]\end{equation} for all $t \geqslant 0.$ 	
\end{lemma}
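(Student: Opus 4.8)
The plan is to recognize that \eqref{equivine} is, at each fixed time, a purely algebraic statement: it compares the quadratic form $E^\tau$ evaluated at $U^\tau(t)=T^\tau(t)U_0$ with the $\Han$-norm of that same vector. Writing $U=(u,v,w)$ for a generic element of $\Ha$ and $\mathcal{E}^\tau(U)$ for the form obtained from \eqref{ef0}--\eqref{ef1} by the substitution $(u^\tau,u^\tau_t,u^\tau_{tt})\mapsto(u,v,w)$, it suffices to prove
\[
k\,\|U\|\Han^2 \leqslant \mathcal{E}^\tau(U)\leqslant K\,\|U\|\Han^2,\qquad \tau\in(0,\tau_0],
\]
for every $U\in\Ha$, with $k,K$ depending only on $\tau_0$, and then set $U=T^\tau(t)U_0$. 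First I would expand $\mathcal{E}^\tau$; abbreviating $X=\|\p u\|_2^2$, $Y=\|\p v\|_2^2$, $Z=\|w\|_2^2$ and $W=\|v\|_2^2$, a direct computation gives
\[
\mathcal{E}^\tau(U)=\l(\tfrac{c^2}{2}+\tfrac{c^4}{2b^\tau}\r)X+\tfrac{b^\tau}{2}Y+\tfrac{\tau}{2}Z+c_W\,W+c^2\langle \p v,\p u\rangle+\tfrac{\tau c^2}{b^\tau}\langle w,v\rangle,
\]
with $c_W=\tfrac{\alpha}{2}+\tfrac{\tau c^4}{2(b^\tau)^2}+\tfrac{c^2\gamma^\tau}{2b^\tau}$. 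Everything then rests on the elementary observations that for $\tau\in(0,\tau_0]$ one has $b^\tau\in[\delta,\delta+\tau_0 c^2]$ and $\gamma_0\leqslant\gamma^\tau\leqslant\alpha$, so that all coefficients above are uniformly bounded and, where relevant, uniformly positive; I would also record the spectral bound $W\leqslant\mu_1^{-1}Y$.

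For the upper bound I would apply Cauchy--Schwarz and Young to the two inner products, namely $c^2|\langle\p v,\p u\rangle|\leqslant\tfrac{c^2}{2}(X+Y)$ and $\tfrac{\tau c^2}{b^\tau}|\langle w,v\rangle|\leqslant\tfrac{\tau c^2}{2b^\tau}(Z+W)$, and then fold $W$ into $Y$ via $W\leqslant\mu_1^{-1}Y$. Since the second cross term already carries the factor $\tau$, this produces $\mathcal{E}^\tau(U)\leqslant K(X+Y+\tau Z)=K\|U\|\Han^2$ with $K$ independent of $\tau$.

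The lower bound is the crux, and it is where the smallness of $\tau_0$ is used. I would split $\mathcal{E}^\tau$ into a block in $(\p u,\p v)$ and a block in $(w,v)$. In each eigenmode the first block is the quadratic form with matrix $\bigl[\begin{smallmatrix}\frac{c^2}{2}+\frac{c^4}{2b^\tau} & \frac{c^2}{2}\\\frac{c^2}{2}&\frac{b^\tau}{2}\end{smallmatrix}\bigr]$, whose determinant equals $\tfrac{c^2b^\tau}{4}>0$; since its trace and determinant are bounded above and away from zero uniformly in $\tau\in(0,\tau_0]$, its least eigenvalue is $\geqslant c_0>0$ uniformly, giving a contribution $\geqslant c_0(X+Y)$. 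For the second block I would apply Young's inequality to $\tfrac{\tau c^2}{b^\tau}\langle w,v\rangle$ with weight $\zeta=b^\tau/(2c^2)$, which retains $\tfrac{\tau}{4}Z$ at the cost of a single negative term $-\tfrac{\tau c^4}{(b^\tau)^2}W$. The decisive point is that this leftover is $O(\tau)$, hence absorbed into $c_W W\geqslant\tfrac{\alpha}{2}W$ as soon as $\tau_0\leqslant\alpha\delta^2/(2c^4)$ (alternatively, $\tau W\leqslant\tau\mu_1^{-1}Y$ may be absorbed into the already gained $c_0 Y$, shrinking $\tau_0$ if necessary). Combining the two blocks yields $\mathcal{E}^\tau(U)\geqslant c_0(X+Y)+\tfrac{\tau}{4}Z\geqslant k\|U\|\Han^2$ with $k=\min\{c_0,\tfrac14\}$, uniformly in $\tau$.

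The step I expect to be the main obstacle is exactly this lower control of $\tau Z=\tau\|w\|_2^2$: the only place $w$ enters $E^\tau$ is through the single mixed square $\tfrac{\tau}{2}\|w+\tfrac{c^2}{b^\tau}v\|_2^2$, so $Z$ can be recovered only together with a negative multiple of $W$. What rescues the uniformity is that this negative contribution is weighted by $\tau$, hence harmless for $\tau_0$ small, while the genuinely $O(1)$ cross term in $(\p u,\p v)$ has the positive-definite structure recorded above.
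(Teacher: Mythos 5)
Your proof is correct and follows essentially the same route as the paper: expand the quadratic form $E^\tau$ pointwise in time, estimate the two cross terms $c^2(\p u^\tau,\p u_t^\tau)$ and $\tfrac{\tau c^2}{b^\tau}(u^\tau_{tt},u^\tau_t)$ by Peter--Paul, and check that every coefficient is bounded above and below uniformly for $\tau\in(0,\tau_0]$ via $\delta\leqslant b^\tau\leqslant \delta+\tau_0c^2$ and $\gamma_0\leqslant\gamma^\tau\leqslant\alpha$ (your $2\times2$ determinant computation for the $(\p u,\p v)$ block is just a tidier packaging of the paper's choice of $\varepsilon$). The one place you give slightly less than the stated lemma is the lower bound, where you impose the extra smallness condition $\tau_0\leqslant\alpha\delta^2/(2c^4)$ (or a further shrinking of $\tau_0$) to absorb the leftover $-\tfrac{\tau c^4}{(b^\tau)^2}\|v\|_2^2$; the paper instead takes the single weight $\varepsilon=2b^\tau/(b^\tau+2c^2)\in\bigl(b^\tau/(b^\tau+c^2),\,b^\tau/c^2\bigr)$, for which the coefficient of $\|u^\tau_t\|_2^2$ stays positive as a consequence of $\gamma^\tau>0$ alone, so no additional restriction on $\tau_0$ beyond the one in the statement is required.
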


\begin{proof}
	We begin with  the second  inequality-as an easier one. In order to get it, we observe that from \eqref{ef1} we have\footnote{we have omitted the obvious dependence on $t.$}:{\small \begin{align*}
	\tiny &E^\tau(t) = \dfrac{b^\tau}{2}\left\|\p\left(u^\tau_t+\dfrac{c^2}{b^\tau}u^\tau\right)\right\|_2^2 + \dfrac{\tau}{2}\left\|u^\tau_{tt}+\dfrac{c^2}{b^\tau}u^\tau_t\right\|_2^2 + \left(\dfrac{c^2\gamma^\tau}{2b^\tau} + \frac{\alpha}{2}\right)\|u^\tau_t\|_2^2 + \dfrac{c^2}{2}\|\p u^\tau\|_2^2 \\ &\leqslant \left(\dfrac{b^\tau+ c^2}{2}\right)\|\p u_t^\tau\|_2^2 + \dfrac{c^2}{2}\left(2 + \dfrac{c^2}{b^\tau}\right)\|\p u^\tau\|_2^2 + \dfrac{c^2}{2b^\tau}\left(\tau+\dfrac{c^2\tau}{b^\tau}+\dfrac{\alpha b^\tau}{c^2} + \gamma^\tau\right)\|u_t^\tau\|_2^2+\dfrac{\tau}{2}\left(1+\dfrac{c^2}{b^\tau}\right)\|u_{tt}^\tau\|_2^2 \\ & \leqslant \dfrac{c^2}{2}\left(2 + \dfrac{c^2}{b^\tau}\right)\|\p u^\tau\|_2^2 +\dfrac{c^2}{2b^\tau}\left[C^\ast\left(\tau+\dfrac{c^2\tau}{b^\tau}+\gamma^\tau\right)+\dfrac{(\alpha C^\ast+b^\tau) b^\tau}{c^2} + b^\tau\right]\|\p u_t^\tau\|_2^2+\dfrac{\tau}{2}\left(1+\dfrac{c^2}{b^\tau}\right)\|u_{tt}^\tau\|_2^2 \\ & \leqslant \dfrac{c^2}{2}\left(2 + \dfrac{c^2}{\delta}\right)\|\p u^\tau\|_2^2 +\dfrac{c^2}{2\delta}\left[\!C^\ast\!\!\left(\tau_0+\dfrac{c^2\tau_0}{\delta}+\gamma^{\tau_0}\right)+\dfrac{(\alpha C^\ast+\delta) \delta}{c^2} + \delta\right]\|\p u_t^\tau\|_2^2+\dfrac{\tau}{2}\left(1+\dfrac{c^2}{\delta}\right)\|u_{tt}^\tau\|_2^2 \\ &\leqslant \max\left\{\dfrac{c^2}{2}\left(2 + \dfrac{c^2}{\delta}\right);\dfrac{c^2}{2\delta}\left[\!C^\ast\!\!\left(\tau_0+\dfrac{c^2\tau_0}{\delta}+\gamma^{\tau_0}\right)+\dfrac{(\alpha C^\ast+\delta) \delta}{c^2} + \delta\right];\dfrac{1}{2}+\dfrac{c^2}{2\delta}\right\} \|U^\tau(t)\|^2_{\tau,0},
	\end{align*}} 
\footnote{we have omitted the obvious dependence on $t.$}
where $C^\ast = C^\ast(n,\Omega)$ is the constant that appears in Poincaré's Inequality.\\
	
Now  the second inequality in (\ref{equivine})  follows after we define
	\begin{equation}\label{bk}
	K(\tau_0)  \equiv  \max\left\{\dfrac{c^2}{2}\left(2 + \dfrac{c^2}{\delta}\right);\dfrac{c^2}{2\delta}\left[\!C^\ast\!\!\left(\tau_0+\dfrac{c^2\tau_0}{\delta}+\gamma^{\tau_0}\right)+\dfrac{(\alpha C^\ast+\delta) \delta}{c^2} + \delta\right];\dfrac{1}{2}+\dfrac{c^2}{2\delta}\right\}.
	\end{equation}

	For the first inequality, fix $\varepsilon > 0$ to be determined later. Peter-Paul inequality then implies that \begin{equation}
	-\frac{c^2\varepsilon\|\p u^{\tau}_t\|_2^2}{2}-\frac{c^2\|\p u^{\tau}\|_2^2}{2\varepsilon} \leqslant c^2(\p u^{\tau},\p u^{\tau}_t) \ \ \mbox{and} \ -\frac{\varepsilon\tau c^2}{2b^\tau}\|u^{\tau}_{tt}\|_2^2-\frac{\tau c^2}{2\varepsilon b^\tau}\|u^{\tau}_t\|_2^2 \leqslant \frac{\tau c^2}{b^\tau}(u^{\tau}_{tt},u^{\tau}_t). \label{epep}
	\end{equation} Then we have {\small \begin{align*}\label{est1}
	&E^\tau(t) = \dfrac{b^\tau}{2}\left\|\p\left(u^\tau_t+\dfrac{c^2}{b^\tau}u^\tau\right)\right\|_2^2 + \dfrac{\tau}{2}\left\|u^\tau_{tt}+\dfrac{c^2}{b^\tau}u^\tau_t\right\|_2^2 + \left(\dfrac{c^2\gamma^\tau}{2b^\tau} + \frac{\alpha}{2}\right)\|u^\tau_t\|_2^2 + \dfrac{c^2}{2}\|\p u^\tau\|_2^2 
	\nonumber \\ &= \dfrac{c^2}{2}\!\left(\!1\!+\!\dfrac{c^2}{b^\tau}\right)\!\!\|\p u^\tau\|_2^2 + \dfrac{b^\tau}{2}\|\p u^\tau_t\|_2^2 +c^2(\p u^\tau, \p u_t^\tau) + \dfrac{\tau}{2}\|u_{tt}^\tau\|_2^2 + \dfrac{\alpha }{2}\!\!\left(1\!+\! \frac{ c^2}{ b^\tau}\!\right)\!\!\|u^\tau_t\|_2^2 + \dfrac{\tau c^2}{b^\tau} (u^\tau_{tt},u^\tau_t) 
	\nonumber	\\ & \geqslant \dfrac{c^2}{2}\!\left(\!1\!+\!\dfrac{c^2}{b^\tau}-\dfrac{1}{\varepsilon}\right)\!\!\|\p u^\tau\|_2^2 + \left(\dfrac{b^\tau}{2}-\frac{c^2\varepsilon}{2}\right)\|\p u^\tau_t\|_2^2 + \dfrac{\tau}{2}\left(1-\dfrac{c^2\varepsilon}{b^\tau}\right)\|u_{tt}^\tau\|_2^2 + \left(\dfrac{\alpha }{2}\!\!\left(1\!+\! \frac{ c^2}{ b^\tau}\!\right)-\dfrac{\tau c^2}{2\varepsilon b^\tau}\right)\|u^\tau_t\|_2^2,
	\end{align*}} 
	Pick an $\varepsilon > 0$ such that $\dfrac{b^\tau}{b^\tau + c^2} < \varepsilon < \dfrac{b^\tau}{c^2}$ and observe that\\
			$$\text{$\dfrac{b^\tau}{2}-\dfrac{c^2\varepsilon}{2}>0$ and $\dfrac{c^2}{2}\bigg(\dfrac{c^2}{2}_1\bigg)-\dfrac{c^2}{2\varepsilon}=\dfrac{c^2}{2}\bigg(\dfrac{c^2}{b^\tau}+1-\dfrac{1}{\varepsilon}\bigg)>0$}$$
			and similarly
			$$\text{$\dfrac{\tau}{2}-\dfrac{\varepsilon\tau c^2}{2b^\tau}>0$ and $\dfrac{\alpha}{2}\bigg(\dfrac{c^2}{b^\tau}+1\bigg)-\dfrac{\tau c^2}{2\varepsilon b^\tau}=\dfrac{\alpha}{2}\bigg(\dfrac{c^2}{b^\tau}+1-\dfrac{1}{\varepsilon}\bigg)=\dfrac{\gamma^\tau}{2}\bigg(\dfrac{c^2}{b^\tau}+1\bigg)+\dfrac{\tau c^2}{2b^\tau}\bigg(\dfrac{c^2}{b^\tau}+1-\dfrac{1}{\varepsilon}\bigg)>0$}.$$
	Hence, picking $$\varepsilon = \dfrac{2b^\tau}{b^\tau + 2c^2}$$ and continuing the lower bound  estimate of $E^\tau(t)$ we have \begin{align*}
	E^\tau(t) &\geqslant \dfrac{c^2}{2}\!\left(\!1\!+\!\dfrac{c^2}{b^\tau}-\dfrac{1}{\varepsilon}\right)\!\!\|\p u^\tau\|_2^2 + \left(\dfrac{b^\tau}{2}-\frac{c^2\varepsilon}{2}\right)\|\p u^\tau_t\|_2^2 + \dfrac{\tau}{2}\left(1-\dfrac{c^2\varepsilon}{b^\tau}\right)\|u_{tt}^\tau\|_2^2 
	\\ &\geqslant \dfrac{c^2}{4}\|\p u^\tau\|_2^2 + \dfrac{\delta^2}{(4+\tau_0)c^2 + 2\delta}\|\p u^\tau_t\|_2^2 + \dfrac{\tau\delta}{2\delta + (4+\tau_0)c^2}\|u_{tt}^\tau\|_2^2 \\ & \geqslant \min\left\{\dfrac{c^2}{4};\dfrac{\delta^2}{(4+\tau_0)c^2 + 2\delta} ; \dfrac{\delta}{2\delta + (4+\tau_0)c^2}\right\} \|U^\tau(t)\|^2_{\tau,0}.
	\end{align*}
	where we have used  $$\dfrac{c^2}{2}\left(1+\dfrac{c^2}{b^\tau}-\dfrac{1}{\varepsilon}\right) = \dfrac{c^2}{4},$$ 
	 \begin{align*}
	\dfrac{b^\tau}{2}-\frac{c^2\varepsilon}{2} = \dfrac{(b^\tau)^2}{4c^2 + 2b^\tau} \geqslant \dfrac{\delta^2}{(4+\tau_0)c^2 + 2\delta}
	\end{align*} and similarly $$\dfrac{\tau}{2}\left(1-\dfrac{c^2\varepsilon}{b^\tau}\right) = \dfrac{\tau b^\tau}{2b^\tau + 4c^2} \geqslant \dfrac{\tau\delta}{2\delta + (4+\tau_0)c^2} .$$
	Setting
	\begin{equation}\label{sk}
	k(\tau_0) \equiv \min\left\{\dfrac{c^2}{4};\dfrac{\delta^2}{(4+\tau_0)c^2 + 2\delta} ; \dfrac{\delta}{2\delta + (4+\tau_0)c^2}\right\}
	\end{equation}
	gives the first part of the inequality in Lemma \ref{s1}. This completes the proof of the Lemma. 

\end{proof}
Lemma \ref{s1} along with the identity (\ref{en}) imply the equi-boundedness of the family $\cF $. Indeed,
 \begin{lemma}\label{equi}Let $U_0 \in \mathbb{H}_0.$ For given $k$ in \eqref{sk} and $K$ in \eqref{bk}, there exists a constant $L_1>0$ (independent on $\tau \in (0, \tau_0) $) such that
 $$ \|T^\tau(t)U_0\|^2\Han \leqslant \dfrac{1}{k} E^\tau(t) \leqslant \dfrac{L_1K}{k}\|U_0\|^2\Han.$$ 
 \end{lemma}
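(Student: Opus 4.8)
The plan is to read the two asserted inequalities separately. The left inequality $\|T^\tau(t)U_0\|^2\Han \leqslant \tfrac1k E^\tau(t)$ is nothing but the lower bound already established in Lemma \ref{s1}, so no new work is needed there. Everything therefore reduces to the right inequality, which after multiplying by $k$ is equivalent to the uniform energy bound $E^\tau(t) \leqslant L_1 K\|U_0\|^2\Han$. Since the upper bound of Lemma \ref{s1} at $t=0$ gives $E^\tau(0) \leqslant K\|U_0\|^2\Han$, it suffices to produce a constant $L_1$, independent of $\tau\in(0,\tau_0]$ and of $t$, with $E^\tau(t)\leqslant L_1 E^\tau(0)$.

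First I would dispose of the dissipative part $E_1^\tau$. Integrating the identity \eqref{en} and using $\gamma^\tau \geqslant \gamma_0 > 0$ yields at once both the monotonicity $E_1^\tau(t) \leqslant E_1^\tau(0)$ and the crucial \emph{uniform} dissipation bound $\int_0^t \|u^\tau_{tt}(s)\|_2^2\,ds \leqslant \gamma_0^{-1}E_1^\tau(0)$, valid for all $t\geqslant 0$ and all $\tau\leqslant\tau_0$.

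The real content is controlling the non-dissipative part $E_0^\tau$ from \eqref{ef0}, and in particular the potential term $\tfrac{c^2}{2}\|\p u^\tau\|_2^2$, which the dissipation in \eqref{en} does not see. Here I would derive a second energy identity by multiplying \eqref{MGTtau} by $u^\tau_t$ in $\Ls$ and integrating in space; after the routine integrations by parts this produces $\tfrac{d}{dt}\big[E_0^\tau + \tau(u^\tau_{tt},u^\tau_t)\big] + b^\tau\|\p u^\tau_t\|_2^2 = \tau\|u^\tau_{tt}\|_2^2$. Integrating on $[0,t]$ and discarding the nonnegative term $b^\tau\int_0^t\|\p u^\tau_t\|_2^2$ expresses $E_0^\tau(t)$ through $E_0^\tau(0)$, the cross terms $\tau(u^\tau_{tt},u^\tau_t)$ at times $0$ and $t$, and the integral $\tau\int_0^t\|u^\tau_{tt}\|_2^2$. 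The last is dominated by $\gamma_0^{-1}\tau_0\,E_1^\tau(0)$ via the previous step, and the two boundary cross-terms I would absorb by Young's inequality, estimating $\tau|(u^\tau_{tt},u^\tau_t)| \leqslant \tfrac{\tau}{2}\|u^\tau_{tt}\|_2^2 + \tfrac{\tau}{2}\|u^\tau_t\|_2^2$ and bounding each summand by $E_1^\tau$ through the last two terms of \eqref{ef1}, where the elementary observation $\tau c^2/b^\tau < 1$ keeps every constant uniform. This gives $E_0^\tau(t) \leqslant C\,E^\tau(0)$ with $C$ depending only on $\tau_0,\gamma_0,\alpha,c,\delta$; adding $E_1^\tau(t)\leqslant E_1^\tau(0)$ furnishes $E^\tau(t)\leqslant L_1 E^\tau(0)$ and closes the chain.

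The main obstacle is exactly this last step: the identity \eqref{en} dissipates only $E_1^\tau$, so the potential energy hidden in $E_0^\tau$ has to be recovered by the separate $u^\tau_t$-multiplier estimate, and the delicate point is to perform the Young-type absorption of the singular cross terms $\tau(u^\tau_{tt},u^\tau_t)$ so that all constants stay bounded as $\tau\to 0$. Tracking the $\tau$-dependence of $b^\tau=\delta+\tau c^2$ and of $\gamma^\tau$, both bounded above and below uniformly on $(0,\tau_0]$, is precisely what makes $L_1$ independent of $\tau$.
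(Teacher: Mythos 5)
Your proposal is correct and follows essentially the same route as the paper: both rest on the dissipation identity \eqref{en} for $E_1^\tau$, the $u^\tau_t$-multiplier identity (the paper's \eqref{aux1}, your $\tfrac{d}{dt}[E_0^\tau+\tau(u^\tau_{tt},u^\tau_t)]+b^\tau\|\p u^\tau_t\|_2^2=\tau\|u^\tau_{tt}\|_2^2$), and a Young-type absorption of the boundary cross terms $\tau(u^\tau_{tt},u^\tau_t)$ into $E_1^\tau$ with constants uniform in $\tau\in(0,\tau_0]$. The only (cosmetic) difference is that the paper weights the $E_0^\tau$ identity by $\gamma^\tau$ and uses the sign of $\gamma^\tau(\tau-1)\|u^\tau_{tt}\|_2^2$, whereas you control $\tau\int_0^t\|u^\tau_{tt}\|_2^2\,ds$ directly by the integrated dissipation from \eqref{en}; both yield the same uniform constant $L_1$.
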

\begin{proof}
	We work with  sufficiently smooth solutions guaranteed by the well-posedness-regularity theory.  The final estimates are obtained via density.
	
	Taking the $L^2-$inner product of \eqref{MGTtau} with $u^\tau_t$ we obtain \begin{equation}\label{aux1}
	b^\tau\|\p u^\tau_t(t)\|_2^2 = \tau \|u^\tau_{tt}(t)\|_2^2 - \dfrac{d}{dt} \left[\dfrac{\alpha}{2}\|u^\tau_t\|_2^2 + \dfrac{c^2}{2}\|\p u^\tau\|_2^2 + \tau(u^\tau_{tt},u^\tau_t)\right].\end{equation}
	
	Multiplying \eqref{aux1} by $\gamma^\tau$ with using \eqref{ef0} gives
	\begin{equation}\label{ut2}
	\gamma^\tau b^\tau\|\p u^\tau_t(t)\|_2^2 = \gamma^\tau\tau \|u^\tau_{tt}(t)\|_2^2 - \gamma^\tau\dfrac{d}{dt}E^\tau_0(t)- \gamma^\tau\tau\dfrac{d}{dt}(u^\tau_{tt},u^\tau_t)
	\end{equation}
	
	Combine \eqref{ut2} with the identity \eqref{en} we obtain
	 \begin{equation}\label{aux2}
	\dfrac{d}{dt}E_1^\tau(t) + \gamma^\tau\dfrac{d}{dt} E^\tau_0(t) + b^\tau\gamma^\tau\|\p u^\tau_t(t)\|_2^2 = \gamma^\tau(\tau-1)\|u^\tau_{tt}(t)\|^2_2 -\gamma^\tau\tau\dfrac{d}{dt}(u^\tau_{tt},u^\tau_t).\end{equation} 
	Since $\tau$ is very small and we assume $0<\tau<1,$ we have $\gamma^\tau(\tau-1)\|u^\tau_{tt}(t)\|^2_2<0$ for all $t\in[0,T].$
	
	Then integrating w.r.t. time from $0$ to $t$ we have \begin{equation}\label{aux3}
	E_1^\tau(t) + \gamma^\tau E^\tau_0(t) + \gamma^\tau b^\tau\int_0^t \|\p u^\tau_t(s)\|_2^2ds\leqslant E_1^\tau(0) + \gamma^\tau E^\tau_0(0)+\gamma^\tau\tau(u^\tau_{tt},u^\tau_t)\big\rvert_0^t.\end{equation} 
	
	From \eqref{en}, we have $E_1^\tau(t)\leqslant E_1^\tau(0)$ and notice that \begin{align*}\tau(u^\tau_{tt},u^\tau_t)\big\rvert_0^t &= \tau(u^\tau_{tt}(t),u^\tau_t(t)) - \tau(u_2,u_1) \\ &\leqslant \dfrac{\tau}{2}\|u^\tau_{tt}(t)\|_2^2 +\dfrac{\tau}{2}\|u_t^\tau(t)\|_2^2+\dfrac{\tau}{2}\|u_2\|_2^2 +\dfrac{\tau}{2}\|u_1\|_2^2 \\ 
	&\leqslant \bigg(1+\dfrac{\tau b}{\alpha c^2}\bigg)\bigg[E_1^\tau(t)+E_1^\tau(0)\bigg]\leqslant2\bigg(\dfrac{\alpha c^2+\tau b}{\alpha c^2}\bigg)E_1^\tau(0)
	.\end{align*} 
	Then
	$$E_1^\tau(t) + \gamma^\tau E^\tau_0(t) + \gamma^\tau b^\tau\int_0^t \|\p u^\tau_t(s)\|_2^2ds\leqslant E_1^\tau(0) + \gamma^\tau E^\tau_0(0)+2\gamma^\tau\bigg(\dfrac{\alpha c^2+\tau b^\tau}{\alpha c^2}\bigg)E_1^\tau(0).$$
	$$E_1^\tau(t) + \gamma^\tau E^\tau_0(t) \leqslant\max\biggl\{\dfrac{\alpha c^2+2\gamma^\tau(\alpha c^2+\tau b^\tau)}{\alpha c^2}, \gamma^\tau\biggr\}E^\tau(0)$$
	Therefore
	\begin{equation}\label{bdd}
	E^\tau(t)\leqslant\dfrac{\max\biggl\{\dfrac{\alpha c^2+2\gamma^{\tau_0}\alpha c^2+\tau b^\tau}{\alpha c^2}, \gamma^{\tau_0}\biggr\}}{\min\{1,\gamma^{\tau_0}\}}E^\tau(0)=L_1E^\tau(0)
	\end{equation}
	
This means that the \textit{total} energy $E^\tau(t)$ is bounded in time by the initial \textit{total} energy.
Thus the proof is obtained.
\end{proof}
 From Lemma \ref{equi} we conclude 
 \begin{clr}
 
There exists a constant $ M > 0$ (independent on $\tau \in (0, \tau_0) $) such that 
 
 \begin{equation}\label{hh1}\|T^{\tau}(t)\|_{\mathcal{L}(\mathbb{H}^{\tau}_0)} \leqslant  M ,  \forall t > 0. \end{equation}
 \end{clr}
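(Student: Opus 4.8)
The plan is to read the operator-norm bound directly off the two-sided energy estimate of Lemma~\ref{equi}, so that essentially no new analysis is required beyond bookkeeping on the constants. First I would recall that, by construction, $\|\cdot\|_{\tau,0}$ is exactly the norm of the weighted space $\mathbb{H}_0^\tau = M_\tau^{1/2}\mathbb{H}_0$. Thus the chain established in Lemma~\ref{equi},
$$\|T^\tau(t)U_0\|^2_{\tau,0} \leqslant \tfrac{1}{k}E^\tau(t) \leqslant \tfrac{L_1 K}{k}\|U_0\|^2_{\tau,0},$$
valid for every $U_0 \in \mathbb{H}_0$ and every $t \geqslant 0$, is precisely the statement that $T^\tau(t)$ maps $\mathbb{H}_0^\tau$ boundedly into itself with $\|T^\tau(t)\|_{\mathcal{L}(\mathbb{H}_0^\tau)}^2 \leqslant L_1 K/k$. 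Taking the supremum over $\|U_0\|_{\tau,0} = 1$ and setting $M \equiv (L_1 K/k)^{1/2}$ then yields \eqref{hh1}.

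The only substantive point to verify is that $M$ may be chosen independently of $\tau \in (0,\tau_0]$. For $k = k(\tau_0)$ and $K = K(\tau_0)$ this is immediate from the defining formulas \eqref{sk} and \eqref{bk}, which involve only $\tau_0$ and the fixed physical parameters $\alpha, c, \delta$. For the factor $L_1$ of \eqref{bdd} I would isolate its sole source of genuine $\tau$-dependence, the term $\tau b^\tau = \tau\delta + \tau^2 c^2$, and note that this quantity is increasing in $\tau$ on $(0,\tau_0]$ and hence bounded above by $\tau_0 b^{\tau_0}$. Replacing $\tau b^\tau$ by this $\tau$-free majorant in the numerator of $L_1$ produces a uniform bound $L_1 \leqslant \overline{L}_1(\tau_0)$, so that $M \leqslant (\overline{L}_1 K/k)^{1/2}$ depends only on $\tau_0$, as claimed.

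I do not anticipate any genuine obstacle: all the hard work—the topological equivalence of the energy functional with the $\mathbb{H}_0^\tau$ norm (Lemma~\ref{s1}) and the dissipation identity \eqref{en} propagated through to \eqref{bdd} (Lemma~\ref{equi})—has already been carried out, and what remains is the elementary observation that a linear operator satisfying $\|TU_0\| \leqslant c\|U_0\|$ for all $U_0$ has operator norm at most $c$. The one place demanding a moment's care is confirming that the constants $k, K, L_1$ are truly $\tau$-free, equivalently that the sign conditions and the choice of $\varepsilon$ in the proof of Lemma~\ref{s1} remain valid uniformly on $(0,\tau_0]$; this holds because $\gamma^\tau \geqslant \gamma_0 > 0$ and $b^\tau \geqslant \delta$ throughout that range, which is exactly the regime fixed by the definition of $\tau_0$.
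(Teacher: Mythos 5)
Your proposal is correct and follows exactly the paper's route: the corollary is stated there as an immediate consequence of Lemma~\ref{equi}, with $M=(L_1K/k)^{1/2}$ read off from the chain $\|T^\tau(t)U_0\|^2_{\tau,0}\leqslant k^{-1}E^\tau(t)\leqslant k^{-1}L_1K\|U_0\|^2_{\tau,0}$. Your extra observation that the residual $\tau b^\tau$ in the numerator of $L_1$ should be majorized by $\tau_0 b^{\tau_0}$ to make the $\tau$-independence fully explicit is a small but legitimate tightening of a point the paper leaves implicit.
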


\noindent  {\bf Part II: Uniform (in $\tau$) decay rates}

In order to prove the uniformity of the decay rates we use the Pazy-Datko Theorem. The first step consists of showing the the map $t \mapsto \|T^\tau(t)U_0\|^2_{\tau,0}$ belongs to $L^1(0,\infty;\mathbb{H}_0).$ This is the statement of the next Lemma.

\begin{lemma} There exists $\overline{K}>0$ independent on $\tau$ such that 
	\begin{equation}\label{Pz-Dt}
	\int_0^\infty \|T^\tau(s)U_0\|^2\Han ds \leqslant \overline{K}\|U_0\|^2\Han < \infty.
	\end{equation}
\end{lemma}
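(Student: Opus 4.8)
The plan is to reduce the claimed $L^1$-bound to a bound on the time-integral of the energy $E^\tau$, and then to control each of the three constituents of the norm $\|\cdot\|\Han$ by a separate multiplier identity. Indeed, by the lower equivalence in Lemma \ref{s1} we have $\|T^\tau(s)U_0\|^2\Han \leqslant k^{-1}E^\tau(s)$, so it suffices to show $\int_0^\infty E^\tau(s)\,ds \leqslant C\,E^\tau(0)$ with $C$ independent of $\tau$; combined with the upper equivalence at $s=0$, namely $E^\tau(0)\leqslant K\|U_0\|^2\Han$, this yields \eqref{Pz-Dt} with $\overline K = CK/k$. Since $\|T^\tau(s)U_0\|^2\Han = \|\p u^\tau\|_2^2 + \|\p u^\tau_t\|_2^2 + \tau\|u^\tau_{tt}\|_2^2$, it is in fact enough to bound, uniformly in $\tau\in(0,\tau_0]$, each of the three quantities $\tau\!\int_0^\infty\|u^\tau_{tt}\|_2^2\,ds$, $\int_0^\infty\|\p u^\tau_t\|_2^2\,ds$ and $\int_0^\infty\|\p u^\tau\|_2^2\,ds$ by a multiple of $E^\tau(0)$. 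Throughout I would work with the smooth solutions furnished by the regularity theory and pass to the energy level by density, exactly as for \eqref{en}.

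For the second-derivative term, I would integrate the dissipation identity \eqref{en} on $[0,\infty)$ to get $\gamma^\tau\int_0^\infty\|u^\tau_{tt}\|_2^2\,ds \leqslant E_1^\tau(0)\leqslant E^\tau(0)$; using $\gamma^\tau\geqslant\gamma_0$ and $\tau\leqslant\tau_0$ gives $\tau\int_0^\infty\|u^\tau_{tt}\|_2^2\,ds \leqslant (\tau_0/\gamma_0)E^\tau(0)$. For the first-derivative term I would use the multiplier identity \eqref{aux1}, integrated on $[0,T]$:
$$b^\tau\int_0^T\|\p u^\tau_t\|_2^2\,ds = \tau\int_0^T\|u^\tau_{tt}\|_2^2\,ds - \Big[E_0^\tau(s)+\tau(u^\tau_{tt},u^\tau_t)\Big]_0^T .$$
The endpoint term $-E_0^\tau(T)$ has the favorable sign, while $-\tau(u^\tau_{tt}(T),u^\tau_t(T))$ is absorbed by Peter–Paul into $C\,(\tau\|u^\tau_{tt}(T)\|_2^2+\tau\|u^\tau_t(T)\|_2^2)\leqslant C\,E^\tau(T)$, which by the equi-boundedness of Lemma \ref{equi} is $\leqslant CL_1E^\tau(0)$ uniformly in $T$. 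Since $b^\tau\geqslant\delta$ and the first term was already controlled, this bounds $\int_0^T\|\p u^\tau_t\|_2^2\,ds$ uniformly in $T$ and $\tau$; letting $T\to\infty$ by monotone convergence gives $\int_0^\infty\|\p u^\tau_t\|_2^2\,ds\leqslant C\delta^{-1}E^\tau(0)$, and Poincaré yields the same bound (up to $C^\ast$) for $\int_0^\infty\|u^\tau_t\|_2^2\,ds$.

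The remaining (position) term requires a second multiplier: taking the $L^2$-inner product of \eqref{MGTtau} with $u^\tau$ and integrating by parts in time produces
$$c^2\int_0^T\|\p u^\tau\|_2^2\,ds = \alpha\int_0^T\|u^\tau_t\|_2^2\,ds - \Big[\tau(u^\tau_{tt},u^\tau)+\alpha(u^\tau_t,u^\tau)+\tfrac{b^\tau}{2}\|\p u^\tau\|_2^2-\tfrac{\tau}{2}\|u^\tau_t\|_2^2\Big]_0^T .$$
Here $\int_0^T\|u^\tau_t\|_2^2\,ds$ is already controlled by the previous step, the endpoint $-\tfrac{b^\tau}{2}\|\p u^\tau(T)\|_2^2$ has the right sign, and the remaining endpoint contributions are estimated by Peter–Paul and Poincaré against $E^\tau(T)\leqslant L_1E^\tau(0)$; letting $T\to\infty$ gives $\int_0^\infty\|\p u^\tau\|_2^2\,ds\leqslant CE^\tau(0)$. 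Summing the three estimates delivers $\int_0^\infty E^\tau(s)\,ds\leqslant CE^\tau(0)$, hence \eqref{Pz-Dt}. The crux of the argument, and the step I expect to be most delicate, is keeping the endpoint contributions uniformly bounded in both $T$ and $\tau$: the decisive point is that these boundary terms are controlled purely by equi-boundedness (Lemma \ref{equi}) together with the uniform lower bounds $b^\tau\geqslant\delta$ and $\gamma^\tau\geqslant\gamma_0$, so that no appeal to the $\tau$-dependent decay rates of \cite{jmgt} is needed and every constant remains independent of $\tau$.
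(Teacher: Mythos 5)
Your proposal is correct and follows essentially the same route as the paper: the dissipation identity \eqref{en} controls $\int\|u^\tau_{tt}\|_2^2$, the $u^\tau_t$-multiplier identity \eqref{aux1} controls $\int\|\p u^\tau_t\|_2^2$, the $u^\tau$-multiplier controls $\int\|\p u^\tau\|_2^2$, and all endpoint terms are absorbed via the equi-boundedness of Lemma \ref{equi} together with $b^\tau\geqslant\delta$ and $\gamma^\tau\geqslant\gamma_0$. The only (cosmetic) difference is that the paper first combines \eqref{en} and \eqref{aux1} into a single identity before integrating, whereas you integrate the two identities separately.
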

\begin{proof}
	Multiplying the identity \eqref{en} by $2\tau$ gives
\begin{equation}\label{dr1}
2\tau\dfrac{d}{dt}E_1^\tau(t) + 2\tau\gamma^\tau \|u^\tau_{tt}(t)\|_2^2 = 0
\end{equation}
	
	Multiply \eqref{aux1} by $\gamma^\tau$ we have
\begin{equation}\label{dr2}
\gamma^\tau b^\tau\|\p u^\tau_t(t)\|_2^2 = \gamma^\tau\tau \|u^\tau_{tt}(t)\|_2^2 - \gamma^\tau\dfrac{d}{dt}E^\tau_0(t)+ \gamma^\tau\tau\dfrac{d}{dt}(u^\tau_{tt},u^\tau_t)
\end{equation}
	With \eqref{dr1} and \eqref{dr2} we get \begin{equation}\label{aux2}
	2\tau\dfrac{d}{dt}E_1^\tau(t) + \gamma^\tau\dfrac{d}{dt} E^\tau_0(t) + \tau\gamma^\tau\|u^\tau_{tt}(t)\|^2_2 + b^\tau\gamma^\tau\|\p u^\tau_t(t)\|_2^2 =  -\gamma^\tau\tau\dfrac{d}{dt}(u^\tau_{tt},u^\tau_t).\end{equation} Then integrating w.r.t. time from $0$ to $t$ we have \begin{equation}\label{aux3}
	2\tau E_1^\tau(t) + \gamma^\tau E^\tau_0(t) + \gamma^\tau\int_0^t \left[\tau\|u^\tau_{tt}(s)\|^2_2 + b^\tau\|\p u^\tau_t(s)\|_2^2\right]ds =  2\tau E_1^\tau(0) + \gamma^\tau E^\tau_0(0)-\gamma^\tau\tau(u^\tau_{tt},u^\tau_t)\big\rvert_0^t.\end{equation}
	Notice that from Lemma \ref{equi} we have
	$$\tau(u^\tau_{tt},u^\tau_t)\big\rvert_0^t\leqslant (1+\tau_0)\left(\|T^\tau(t)U_0\|_\tau^2 + \|U_0\|_\tau^2\right) \leqslant \dfrac{(1+\tau_0)(L_1K+k)}{k}\|U_0\|_\tau^2.$$
	Therefore, \begin{align}
	\label{aux4} \int_0^t \left[\tau\|u^\tau_{tt}(s)\|_2^2 + b^\tau\|\p u^\tau_t(s)\|_2^2\right]ds \nonumber &\leqslant \dfrac{\gamma^\tau(1+\tau_0)(L_1K+k)+(2\tau_0+\gamma^\tau)L_1Kk}{k\gamma^\tau}\|U_0\|^2_{\tau,0} \\ &\leqslant \dfrac{\gamma^{\tau_0}(1+\tau_0)(L_1K+k)+(2\tau_0+\gamma^{\tau_0})L_1Kk}{k\gamma^{\tau_0}}\|U_0\|^2_{\tau,0} = M_1\|U_0\|^2_{\tau,0}.
	\end{align}
	Similarly, taking the $L^2-$inner product of \eqref{MGTtau} with $u^\tau$ we have \begin{equation}
	\label{aux5} \dfrac{b^\tau}{2} \dfrac{d}{dt} \|\p u^\tau\|_2^2 + c^2 \|\p u^\tau(t)\|_2^2 = \alpha\|u^\tau_t(t)\|_2^2 + \dfrac{d}{dt}\left[\dfrac{\tau}{2}\|u^\tau_t|\|_2^2 - \tau(u^\tau_{tt},u^\tau) - \alpha(u^\tau_t,u^\tau)\right]
	\end{equation} and integrating \eqref{aux5} w.r.t time from $0$ to $t$ we have \begin{align}
	\label{aux6} \dfrac{b^\tau}{2} \|\p u^\tau(t)\|_2^2 &+ c^2\int_0^t \|\p u^\tau(s)\|_2^2 ds\nonumber\\
	&=  \dfrac{b^\tau}{2} \|\p u_0\|_2^2+\alpha\int_0^t\|u^\tau_t(s)\|_2^2ds + \left[\dfrac{\tau}{2}\|u^\tau_t|\|_2^2 - \tau(u^\tau_{tt},u^\tau) - \alpha(u^\tau_t,u^\tau)\right]\biggr\rvert_0^t \nonumber \\ & \leqslant b^\tau\|U_0\|^2_{\tau,0} + \dfrac{\alpha M_1}{b^\tau} \|U_0\|^2_{\tau,0} + \dfrac{\big[(\alpha + \tau_0)C^\ast + 1\big](L_1K+k)}{k}\|U_0\|^2_{\tau,0} \nonumber \\ &= \dfrac{kb^\tau + \alpha k M_1 + b^\tau\big[(\alpha + \tau_0)C^\ast + 1\big](L_1K+k)}{b^\tau k}\|U_0\|^2_{\tau,0} \nonumber \\ & \leqslant  \dfrac{k\delta + \alpha k M_1 + \delta\big[(\alpha + \tau_0)C^\ast + 1\big](L_1K+k)}{\delta k}\|U_0\|^2_{\tau,0} = M_2\|U_0\|^2_{\tau,0}.
	\end{align}
	Hence, by \eqref{aux4} and \eqref{aux6} we get \begin{equation}\label{es0}
	\int_0^\infty \|T^\tau(s)U_0\|^2_\tau ds \leqslant \dfrac{M_1 + M_2}{1 + \delta + c^2} \|U_0\|^2_\tau = M_3\|U_0\|^2_\tau < \infty.
	\end{equation}
	
	Therefore, according to Theorem 4.1 (\cite{pazy}, p. 116) the rate $\omega$ can be determined  as follows: We first chose a number $\rho$ such that $0 < \rho < M_3^{-1},$ then we define a number $\eta_0 = M_3\rho^{-1}$ and choose another number $\eta$ such that $\eta > \eta_0.$ The rate is then given by $$\omega = -\dfrac{1}{\eta}\log(M_3\rho) > 0,$$ and is clearly independent on $\tau.$ The proof is thus completed. 
\end{proof}

\subsection{Proof of Theorem \ref{wp-apb-he-n}}

\noindent {\bf (a) Well-posedness} 

The well-posedness follows directly from Theorem 1.4 in \cite{mgtp1} and the fact that on the space $\mathbb{H}_1$ the standard sum norm $\|\cdot\|$ and $\|\cdot\|\Hbn$ are equivalent for each $\tau \in (0,1]$.

\

\noindent {\bf (b) Equi-boundedness  on  $\Hb^{\tau} $ and uniform (in $\tau$) exponential stability}

Let $u^\tau$ be the solution for \eqref{MGTtau} and consider the energy functional $\E^\tau(t)$ defined as \begin{equation}\E^\tau(t) = E^\tau(t) + \|Au^\tau (t)\|_2^2. \label{hef}\end{equation}

By Theorem 1.3 (\cite{mgtp1}) the energy functional above (which is equivalent to the $\|(u^\tau, u^\tau_t, u^\tau_{tt})\|$ for all $t \geqslant 0$) decays exponentially with time [for each fixed $\tau > 0,$ provided $\gamma^\tau \equiv \alpha - c^2\tau (b^\tau)^{-1} > 0$.

As in the previous theorem, the equi-boundedness of the family $\mathcal{F}_1$ follows from the lemma:

\begin{lemma}\label{s2}
	Let $U_0 \in \mathbb{H}_1$ and define $\tau_0 \equiv \inf\{c >0; \gamma^\tau > 0 \ \mbox{for all} \ \tau \in (0,c]\}>0$. Then there exist $k_1=k_1(\tau_0)$ and $K_1=K_1(\tau_0)$ such that $$k_1\|T^\tau(t)U_0\|^2\Hbn \leqslant \E^\tau(t) \leqslant K_1\|T^\tau(t)U_0\|^2\Hbn, \ \tau \in \left(0,\tau_0\right]$$ for all $t \geqslant 0.$
\end{lemma}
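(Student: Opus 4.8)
The plan is to bootstrap directly from Lemma~\ref{s1}, which already provides the equivalence $k\|\cdot\|\Han^2 \le E^\tau \le K\|\cdot\|\Han^2$ on $\mathbb{H}_0^\tau$, and to absorb the single additional term $\|Au^\tau\|_2^2$ that is present both in $\E^\tau$ and in the $\|\cdot\|\Hbn$-norm. The crucial observation is structural: the norms $\|\cdot\|\Hbn$ and $\|\cdot\|\Han$ differ only in the leading (displacement) slot, which is measured in $\mathcal{D}(A)$ for the former and in $\mathcal{D}(A^{1/2})$ for the latter, while the velocity and acceleration slots are identical. Thus $\E^\tau$ is exactly $E^\tau$ augmented by $\|Au^\tau\|_2^2$, and the lemma should follow by elementary comparison once this extra term is controlled.

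First I would record the identity
\[
\|U^\tau(t)\|\Han^2 + \|Au^\tau(t)\|_2^2 = \|U^\tau(t)\|\Hbn^2 + \|A^{1/2}u^\tau(t)\|_2^2 ,
\]
which simply reflects that moving from the $\tau,0$- to the $\tau,1$-norm trades $\|A^{1/2}u^\tau\|_2^2$ for $\|Au^\tau\|_2^2$ in the leading component. Next, applying Poincar\'e's inequality (with the same constant $C^\ast$ as in Lemma~\ref{s1}) to $A^{1/2}u^\tau$ in place of $u^\tau$ gives $\|A^{1/2}u^\tau\|_2^2 \le C^\ast\|Au^\tau\|_2^2 \le C^\ast\|U^\tau(t)\|\Hbn^2$. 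Combining this with the identity and the trivial bound $\|A^{1/2}u^\tau\|_2^2 \ge 0$ yields the two-sided comparison
\[
\|U^\tau(t)\|\Hbn^2 \le \|U^\tau(t)\|\Han^2 + \|Au^\tau(t)\|_2^2 \le (1+C^\ast)\|U^\tau(t)\|\Hbn^2 .
\]

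The conclusion then follows by splicing in Lemma~\ref{s1}. Writing $\E^\tau = E^\tau + \|Au^\tau\|_2^2$ and inserting $k\|\cdot\|\Han^2 \le E^\tau \le K\|\cdot\|\Han^2$, the added term carries coefficient one, so I would group the two terms with $\min\{k,1\}$ from below and $\max\{K,1\}$ from above to obtain
\[
\min\{k,1\}\bigl(\|U^\tau\|\Han^2 + \|Au^\tau\|_2^2\bigr) \le \E^\tau(t) \le \max\{K,1\}\bigl(\|U^\tau\|\Han^2 + \|Au^\tau\|_2^2\bigr),
\]
and then feed in the comparison above. This produces the claim with $k_1 = \min\{k,1\}$ and $K_1 = (1+C^\ast)\max\{K,1\}$, both depending only on $\tau_0$ through $k=k(\tau_0)$ and $K=K(\tau_0)$ of Lemma~\ref{s1}.

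There is no genuine analytic obstacle here: the lemma is essentially a lift of Lemma~\ref{s1} by one order of regularity in the displacement variable, and all constants ($k$, $K$, $C^\ast$) are $\tau$-independent, so the resulting $k_1,K_1$ are as well. The only point requiring care is bookkeeping — keeping the coefficient of $\|Au^\tau\|_2^2$ pinned at one throughout, and using $\max/\min$ rather than $K/k$ so that the extra term is absorbed on the correct side of each inequality.
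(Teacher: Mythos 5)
Your proposal is correct and follows essentially the same route as the paper, which obtains Lemma \ref{s2} from Lemma \ref{s1} precisely by adjoining the term $\|Au^\tau(t)\|_2^2$ to both the energy and the norm; you have merely made explicit the bookkeeping (the identity relating the $\Han$- and $\Hbn$-norms, the bound $\|A^{1/2}u^\tau\|_2^2\leqslant C^\ast\|Au^\tau\|_2^2$, and the $\min/\max$ choice of $k_1,K_1$) that the paper leaves implicit.
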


The proof of Lemma \ref{s2} as well as the conclusion of the equi-boundedness in $\mathbb{H}_1$  capitalizes on the estimates already derived 
 for $\mathbb{H}_0$. We shall focus on  additional terms which need to be estimated additionally.  
 


 Lemma \ref{s2} is obtained from   Lemma \ref{s1} and the  estimates  already derived for the space $\mathbb{H}_0^{\tau}$  by adding the term $||A u^{\tau} (t)||^2_2 $.  This gives the inequality stated in Lemma \ref{s2}. 
	Recall \eqref{bdd}, we will   obtain the apriori bound for $\E^{\tau}(t) $ from the relation 
	$$\E^{\tau}(t) = E^{\tau}(t) + ||Au^{\tau}(t)||^2_2 \leq L_1E^{\tau}(0) + \sup_{t \geq 0 } ||Au^{\tau}(t)||^2_2.$$
	To achieve the goal, the second term needs to be accounted for. 
	To estimate the second term we employ the equality by taking the $L^2-$inner product of \eqref{MGTtau} with the multiplier $Au^\tau$ and integrating w.r.t time from $0$ to $t$
		\begin{align}
	\label{aux7} \dfrac{b^\tau}{2} \|Au^\tau(t)\|_2^2 &+ c^2\int_0^t \|A u^\tau(s)\|_2^2 ds =  \dfrac{b^\tau}{2} \|A u_0\|_2^2\nonumber\\
	&+\alpha\int_0^t\|\p u^\tau_t(s)\|_2^2ds + \left[\dfrac{\tau}{2}\|\p u^\tau_t|\|_2^2 - \tau(u^\tau_{tt},Au^\tau) - \alpha(u^\tau_t,Au^\tau)\right]\biggr\rvert_0^t \end{align}
	and by using  the already obtained estimates in $\Ha^{\tau}$
\begin{align}
\label{aux8}\dfrac{(b^\tau-\varepsilon)}{2} \|Au^\tau(t)\|_2^2 + c^2\int_0^t \|A u^\tau(s)\|_2^2 ds&\leqslant b^\tau\|U_0\|^2\Hbn + \alpha M3\|U_0\|^2\Hbn+\dfrac{\big(\dfrac{\tau}{2}+\tau C_\varepsilon+\alpha\big)(K_1+k_1)}{k_1}\|U_0\|^2\Hbn\nonumber \\ &=\dfrac{2k_1b^\tau+2k_1 \alpha M_3+\big[\tau(1+2C_\varepsilon)+2\alpha\big](K_1+k_1)}{2k_1}\|U_0\|^2\Hbn \nonumber\\ &\leqslant\dfrac{2k_1\delta+2k_1 \alpha M_3+\big[\tau_0(1+2C_\varepsilon)+2\alpha\big](K_1+k_1)}{2k_1}\|U_0\|^2\Hbn\nonumber \\ &=\hat{C}\|U_0\|^2_{\tau,1}.
\end{align}
Rescaling $\varepsilon$ allows to estimate $ sup_{t} ||A u^{\tau} (t) || $, hence $\E^{\tau}(t) $. 
	  Then we have $$ \|T^\tau(t)U_0\|^2\Hbn \leqslant \dfrac{1}{k_1} \E^\tau(t) \leqslant\bigg(\dfrac{\hat{C}K_1}{k_1}\bigg)\|U_0\|^2\Hbn,$$ from where  it follows that 
	  the groups are  equibounded also on $\Hb^{\tau} $, i.e.,
	  \begin{clr} There exists a constant $ N > 0$ [independent on $\tau \in (0, \tau_0 ] $] such that 
	  	\begin{equation}\label{hh2}\|T^\tau(t)\|_{\mathcal{L}(\mathbb{H}^{\tau}_1)} \leqslant \bigg(\dfrac{\hat{C}K_1}{k_1}\bigg)^{1/2} = N.\end{equation}
	  \end{clr}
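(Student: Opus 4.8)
The plan is to deduce the equi-boundedness stated in (\ref{hh2}) directly from the two-sided energy equivalence of Lemma \ref{s2} together with a uniform (in both $t$ and $\tau$) a priori bound on the augmented energy $\E^\tau(t)$. The lower bound $k_1\|T^\tau(t)U_0\|^2\Hbn \leqslant \E^\tau(t)$ reduces the operator-norm estimate on $\Hb^\tau$ to controlling $\E^\tau(t)$ by a $\tau$-independent multiple of $\|U_0\|^2\Hbn$, uniformly for $\tau\in(0,\tau_0]$ and $t\geqslant 0$. Writing $\E^\tau(t)=E^\tau(t)+\|Au^\tau(t)\|_2^2$, the first summand is already in hand: Lemma \ref{equi} gives $E^\tau(t)\leqslant L_1 E^\tau(0)$, and the upper bound of Lemma \ref{s1} at $t=0$ converts $E^\tau(0)$ into a constant multiple of $\|U_0\|^2\Han$, which is in turn controlled by $\|U_0\|^2\Hbn$ via the continuous embedding $\D{A}\hookrightarrow\D{\p}$. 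Thus everything hinges on the extra term $\sup_{t\geqslant 0}\|Au^\tau(t)\|_2^2$.

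To estimate that term I would test equation (\ref{MGTtau}) with the multiplier $Au^\tau$ and integrate in time, producing identity (\ref{aux7}). Its left side yields the desired $\tfrac{b^\tau}{2}\|Au^\tau(t)\|_2^2$ together with a nonnegative dissipation integral $c^2\int_0^t\|Au^\tau\|_2^2\,ds$, while the right side contains only (i) the integral $\alpha\int_0^t\|\p u^\tau_t\|_2^2\,ds$, already bounded by the $\Ha^\tau$ estimates (\ref{aux4}) and (\ref{es0}), and (ii) the boundary/cross terms $\tau(u^\tau_{tt},Au^\tau)$ and $\alpha(u^\tau_t,Au^\tau)$ evaluated between $0$ and $t$. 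Applying Young's inequality to absorb the factor $\|Au^\tau(t)\|_2^2$ occurring in these cross terms into the leading coefficient on the left is exactly what produces (\ref{aux8}), giving $\sup_{t}\|Au^\tau(t)\|_2^2\leqslant C\,\|U_0\|^2\Hbn$.

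Chaining these estimates yields $\E^\tau(t)\leqslant \hat{C}K_1\|U_0\|^2\Hbn$, and combining with the lower bound of Lemma \ref{s2} gives $\|T^\tau(t)U_0\|^2\Hbn\leqslant \tfrac{\hat{C}K_1}{k_1}\|U_0\|^2\Hbn$, so that $N=(\hat{C}K_1/k_1)^{1/2}$ serves as the claimed bound.

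The main obstacle is \emph{uniformity in $\tau$}: every constant must stay bounded away from $0$ and $\infty$ as $\tau\to 0$. The definition $\tau_0=\inf\{c>0:\gamma^\tau>0\ \forall\,\tau\in(0,c]\}$ guarantees $\gamma^\tau\geqslant\gamma_0>0$, so the dissipation in (\ref{en}) does not degenerate and the equivalence constants $k_1,K_1$ of Lemma \ref{s2} are genuinely $\tau$-independent. The delicate point is the Young-inequality absorption in step two: the free parameter $\varepsilon$ must be rescaled so that the coefficient $b^\tau-\varepsilon$ of $\|Au^\tau(t)\|_2^2$ remains bounded below uniformly in $\tau$ (using $b^\tau=\delta+\tau c^2\geqslant\delta$), while the compensating constant $C_\varepsilon$ multiplying $\tau\|u^\tau_{tt}\|_2^2$ stays harmless precisely because it is weighted by $\tau\leqslant\tau_0$. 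Replacing $b^\tau$ by $\delta$ and $\tau$ by $\tau_0$ in the final bookkeeping, exactly as in passing to the last line of (\ref{aux8}), is what renders $\hat{C}$, and hence $N$, independent of $\tau$.
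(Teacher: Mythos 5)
Your proposal is correct and follows essentially the same route as the paper: the two-sided equivalence of Lemma \ref{s2}, the decomposition $\E^\tau(t)=E^\tau(t)+\|Au^\tau(t)\|_2^2$ with the first summand handled by Lemma \ref{equi}, the multiplier $Au^\tau$ leading to \eqref{aux7}--\eqref{aux8}, and the Young-inequality absorption with $b^\tau\geqslant\delta$ and $\tau\leqslant\tau_0$ to keep all constants $\tau$-independent. No substantive differences to report.
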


		
		
	As for exponential uniform decays, we shall evoke again  the Pazy-Datko Theorem.  This  shows the existence of uniform (in $\tau$) decay rate with existence of $\overline{K_1}>0$ independent on $\tau$ such that 
		\begin{lemma}
			\begin{equation}\label{Pz-Dt1}
			\int_0^\infty \|T^\tau(s)U_0\|^2\Hbn ds \leqslant \overline{K_1}\|U_0\|^2\Hbn < \infty.
			\end{equation}
		\end{lemma}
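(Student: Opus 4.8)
The plan is to establish \eqref{Pz-Dt1} by promoting the $\mathbb{H}_0^\tau$ integrability estimate \eqref{es0} to the $\mathbb{H}_1^\tau$ topology. Comparing the two norms, one has $\|T^\tau(s)U_0\|_{\tau,1}^2 = \|Au^\tau(s)\|_2^2 + \|\p u^\tau_t(s)\|_2^2 + \tau\|u^\tau_{tt}(s)\|_2^2$, which differs from $\|T^\tau(s)U_0\|_{\tau,0}^2$ only in that the leading component $\|\p u^\tau(s)\|_2^2$ is replaced by $\|Au^\tau(s)\|_2^2$. Consequently the sole new quantity to control is $\int_0^\infty \|Au^\tau(s)\|_2^2\,ds$, with a bound independent of $\tau$; everything else is inherited verbatim from the proof of the $\mathbb{H}_0^\tau$ version (the Lemma yielding \eqref{Pz-Dt}).

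First I would extract the required bound on $\int_0^\infty \|Au^\tau(s)\|_2^2\,ds$ directly from the energy identity \eqref{aux7} obtained with the multiplier $Au^\tau$. Applying Peter--Paul to the endpoint cross terms $\tau(u^\tau_{tt},Au^\tau)$ and $\alpha(u^\tau_t,Au^\tau)$ produces a term $\tfrac{\varepsilon}{2}\|Au^\tau(t)\|_2^2$ that is absorbed into the left-hand side, leaving lower-order boundary terms with a constant $C_\varepsilon$. These boundary terms are controlled uniformly in $\tau$ by the $\mathbb{H}_0^\tau$ equi-boundedness of Lemma \ref{equi}, while the interior term $\alpha\int_0^t\|\p u^\tau_t(s)\|_2^2\,ds$ is absorbed by the already-established bound $M_3\|U_0\|_{\tau,0}^2$ from \eqref{es0}. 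This is precisely the computation recorded in \eqref{aux8}, and it yields, for a fixed $\varepsilon<\delta$,
\[
\frac{b^\tau-\varepsilon}{2}\|Au^\tau(t)\|_2^2 + c^2\int_0^t\|Au^\tau(s)\|_2^2\,ds \leqslant \hat{C}\,\|U_0\|_{\tau,1}^2 , \qquad t\geqslant 0,
\]
so that $\int_0^\infty \|Au^\tau(s)\|_2^2\,ds \leqslant \tfrac{\hat{C}}{c^2}\|U_0\|_{\tau,1}^2$ with $\hat{C}$ independent of $\tau$.

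Next I would assemble the pieces. Since $A$ has compact resolvent with smallest eigenvalue $\mu_1>0$, the embedding $\mathcal{D}(A)\hookrightarrow\mathcal{D}(\p)$ gives $\|\p u\|_2^2\leqslant \mu_1^{-1}\|Au\|_2^2$, whence $\|U_0\|_{\tau,0}^2\leqslant \max\{1,\mu_1^{-1}\}\|U_0\|_{\tau,1}^2$ uniformly in $\tau$. Writing $\|T^\tau(s)U_0\|_{\tau,1}^2 \leqslant \|T^\tau(s)U_0\|_{\tau,0}^2 + \|Au^\tau(s)\|_2^2$ and integrating, the first contribution is bounded by \eqref{es0} and the second by the estimate just derived, which gives \eqref{Pz-Dt1} with $\overline{K_1} = \max\{1,\mu_1^{-1}\}M_3 + \hat{C}/c^2$, manifestly independent of $\tau\in(0,\tau_0]$. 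With \eqref{Pz-Dt1} in hand, the Pazy--Datko theorem (\cite{pazy}, Theorem 4.1) applies on $\mathbb{H}_1^\tau$ exactly as in Part II of the proof of Theorem \ref{wp-apb-he}, producing constants $\overline{M}_1,\overline{\omega}_1>0$ independent of $\tau$ and completing Theorem \ref{wp-apb-he-n}(b).

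The main obstacle is keeping every constant genuinely independent of $\tau$. This hinges on three uniformities: choosing $\varepsilon$ fixed (so that $b^\tau-\varepsilon\geqslant\delta-\varepsilon>0$ and $C_\varepsilon$ is frozen) rather than $\tau$-dependent; using $\delta\leqslant b^\tau\leqslant \delta+\tau_0 c^2$ and $\gamma^\tau\geqslant\gamma_0>0$ to bound below and above all $b^\tau$- and $\gamma^\tau$-dependent coefficients on $(0,\tau_0]$; and invoking the $\mathbb{H}_0^\tau$ estimates \eqref{es0} and Lemma \ref{equi}, whose constants were themselves already shown to be $\tau$-independent. Once these are secured, the Pazy--Datko rate $\overline{\omega}_1$ is determined by $\overline{K_1}$ alone and is therefore uniform.
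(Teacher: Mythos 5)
Your proposal is correct and follows essentially the same route as the paper: the integrability of $\|Au^\tau(s)\|_2^2$ is read off from the multiplier identity \eqref{aux7} as packaged in \eqref{aux8}, combined with the already-established $\mathbb{H}_0^\tau$ bound \eqref{es0}, and fed into the Pazy--Datko theorem. Your additional care with the comparison $\|U_0\|_{\tau,0}^2\leqslant \max\{1,\mu_1^{-1}\}\|U_0\|_{\tau,1}^2$ and with the $\tau$-independence of each constant only makes explicit what the paper leaves implicit.
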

		\begin{proof}
			Direct from \eqref{aux8}, we have
			\begin{align}
			  c^2\int_0^t \|A u^\tau(s)\|_2^2 ds
			\leqslant N_1\|U_0\|^2\Hbn.
			\end{align}
			Hence, by \eqref{es0} and \eqref{aux7} we have
			\begin{equation}\label{es1}
			\int_0^\infty \|T^\tau(s)U_0\|^2\Hbn ds \leqslant \dfrac{M_3 + N_1}{c^2} \|U_0\|^2\Hbn = N_2\|U_0\|^2\Hbn < \infty.
			\end{equation}
			
			Thus by Theorem 4.1 (\cite{pazy}, p. 116) the rate $\omega$ can be taken as we first chose a number $\rho$ such that $0 < \rho < N_2^{-1},$ then we define a number $\eta_0 = N_2\rho^{-1}$ and choose another number $\eta$ such that $\eta > \eta_0.$ The rate is then given by $$\omega = -\dfrac{1}{\eta}\log(N_2\rho) > 0,$$ and is clearly independent on $\tau.$ Then the proof is completed. 
	\end{proof}

\subsection{Proof of Theorem \ref{cr}}
{\bf Proof of part (a)-convergence rates }. 

Let $x^\tau = u^\tau - u^0$ where $u^\tau$ and $u^0$ are the solutions for the problems \eqref{MGTtau} and \eqref{MGT0} respectively with the same initial values for $u(t=0)$ and $u_t(t=0)$. By  taking the difference of the two problems we can write a $x^\tau-$problem given by \begin{equation}\begin{cases}
\alpha x^\tau_{tt} +c^2A x^\tau + \delta A x^\tau_t = -\tau u^\tau_{ttt} - \tau c^2 Au^\tau_t\ &\mbox{in} \ (0,T) \times \Omega, \\
x^\tau(0) = 0, x^\tau_t(0) = 0.
\end{cases}\label{xp}\end{equation} 

Observe that since $\mathbb{H}_2$ is $\mathbb{H}_0$  subject to the multiplication by $A^{1/2} $ where the latter leaves the dynamics invariant 
and  $\A^\tau$ generates a $C_0-$group in $\mathbb{H}_0,$ we also have $\A^\tau$ generating a $C_0-$group in $\mathbb{H}_2.$ 

We aim to prove that $$\|P T^\tau(t) U_0 - T(t)PU_0\|_{\mathbb{H}_0^0 }^2 \leqslant\tau C\|U_0\|_{\mathbb{H}_2}^2$$ which is the same as showing that $$\|\p x^\tau(t)\|_2^2 + \|\p x^\tau_t(t)\|_2^2 \leqslant  \tau C\|U_0\|_{\mathbb{H}_2}^2$$
for all $t\in[0,T].$

{\bf Step 1: Reconstruction of $\|A^{1/2} x^\tau(t)\|_2^2$} 

\medskip

We start by taking the $L^2-$inner product of $x^\tau-$equation \eqref{xp} with $x^\tau_t$. This gives $$
\alpha ( x^\tau_{tt}(t),x^\tau_t(t)) +c^2(A x^\tau(t),x^\tau_t(t)) + \delta(A x^\tau(t)_t,x^\tau_t(t)) = -\tau (u^\tau_{ttt}(t),x^\tau_t(t)) - \tau c^2(Au^\tau_t(t),x^\tau_t(t))$$ which can be rewritten as \begin{equation}\dfrac{\alpha}{2}\dfrac{d}{dt}\| x^\tau_t\|_2^2 +\dfrac{c^2}{2}\dfrac{d}{dt}\|A^{1/2}x^\tau\|_2^2 + \delta\|A^{1/2}x^\tau_t(t)\|_2^2 = -\tau(u^\tau_{ttt}(t),x^\tau_t(t))-\tau c^2(\p u^\tau_t(t),\p x_t^\tau(t)).\label{s21}\end{equation} 

We now integrate \eqref{s21} with respect to time from $0$ to $t \in (0,T]$ . This gives
\begin{align*}
\dfrac{\alpha}{2}\| x^\tau_t(t)\|_2^2 &+\dfrac{c^2}{2}\|A^{1/2}x^\tau(t)\|_2^2 + \delta\int_0^t\|A^{1/2}x^\tau_t(s)\|_2^2ds\nonumber\\
&\leqslant \dfrac{\delta}{2}\int_0^t\|A^{1/2}x^\tau_t(s)\|_2^2ds+\dfrac{\tau C^* }{\delta}\int_{0}^{t}\|\sqrt{\tau}u^\tau_{ttt}(s)\|_2^2ds + \dfrac{\tau^2 c^4T}{\delta}\sup_{t\in[0,T]}\|\p u^\tau_t(t)\|_2^2
\end{align*}
where we have used the zero initial conditions of the $x^\tau-$equation and $C^*$ is the Poincaré's constant.
Then
\begin{align}\label{s22}
\dfrac{\alpha}{2}\| x^\tau_t(t)\|_2^2 +\dfrac{c^2}{2}\|A^{1/2}x^\tau(t)\|_2^2&+ \dfrac{\delta}{2}\int_0^t\|A^{1/2}x^\tau_t(s)\|_2^2ds\nonumber\\
&\leqslant\dfrac{\tau  C^*}{\delta}\int_{0}^{t}\|\sqrt{\tau}u^\tau_{ttt}(s)\|_2^2ds  + \dfrac{\tau^2 c^4T}{\delta}\sup_{t\in[0,T]}\|\p u^\tau_t(t)\|_2^2
\end{align}

This was the reconstruction we needed.

\ 

\noindent {\bf Step 2: Reconstruction of $\|A^{1/2} x^\tau_t(t)\|_2^2$} 

\medskip

We start by taking the $L^2-$inner product of $x^\tau-$equation \eqref{xp} with $Ax^\tau_t$. This gives $$
\alpha ( x^\tau_{tt}(t),Ax^\tau_t(t)) +c^2(A x^\tau(t),Ax^\tau_t(t)) + \delta(A x^\tau_t(t),Ax^\tau_t(t)) = -\tau (u^\tau_{ttt}(t),Ax^\tau_t(t))-\tau c^2(Au^\tau_t(t),Ax^\tau_t(t))$$ which can be rewritten as \begin{equation}\dfrac{\alpha}{2}\dfrac{d}{dt}\|A^{1/2} x^\tau_t\|_2^2+\dfrac{c^2}{2}\dfrac{d}{dt}\|Ax^\tau\|_2^2 + \delta\|Ax^\tau_t(t)\|_2^2 = -\tau(u_{ttt}(t),Ax^\tau_t(t))-\tau c^2(Au^\tau_t(t),Ax^\tau_t(t)).\label{s11}\end{equation} 

We now integrate \eqref{s11} with respect to time from $0$ to $t \in (0,T]$. This gives
\begin{align}
\dfrac{\alpha}{2}\|A^{1/2} x^\tau_t(t)\|_2^2&+\dfrac{c^2}{2}\|Ax^\tau(t)\|_2^2 +\delta\int_0^t \|Ax^\tau_t(s)\|_2^2ds\nonumber\\
&\leqslant \dfrac{\delta}{2}\int_0^t \|Ax^\tau_t(s)\|_2^2ds +\frac{\tau }{\delta}\int_{0}^{t}\|\sqrt{\tau}u^\tau_{ttt}(s)\|_2^2ds + \dfrac{\tau^2 c^4T}{\delta}\sup_{t\in[0,T]}\|A u^\tau_t(t)\|_2^2
\end{align}
Then

 \begin{equation}\dfrac{\alpha}{2}\|A^{1/2} x^\tau_t(t)\|_2^2+\dfrac{c^2}{2}\|Ax^\tau(t)\|_2^2 +\dfrac{\delta}{2}\int_0^t \|Ax^\tau_t(s)\|_2^2ds \leqslant\frac{\tau }{\delta}\int_{0}^{t}\|\sqrt{\tau}u^\tau_{ttt}(s)\|_2^2ds + \dfrac{\tau^2 c^4T}{\delta}\sup_{t\in[0,T]}\|A u^\tau_t(t)\|_2^2\label{s12},\end{equation} where we have used the zero initial conditions of the $x^\tau-$equation.

This was the reconstruction we needed.

\ 

\noindent {\bf Step 3: Uniform (in $\tau$) bound for $\displaystyle\int_{0}^{t}\|\sqrt{\tau}u^\tau_{ttt}(s)\|_2^2ds $} 

\medskip

Recall that the problem \eqref{MGTtau} is written abstractly as (see \eqref{op1}) \begin{equation}\begin{cases}M_\tau U^\tau_t(t) = M_\tau\mathcal{A}^\tau U^\tau(t), \ t>0, \\ U^\tau(0)=U_0=(u_0,u_1,u_2)^T. \end{cases}\label{AbS2}\end{equation} 

In order to estimate $\|\sqrt{\tau}u^\tau_{ttt}(t)\|_2^2,$ we differentiate \eqref{AbS2} in time, which leads us to  \begin{equation}\begin{cases}M_\tau U^\tau_{tt}(t) = M_\tau\mathcal{A}^\tau U^\tau_t(t), \ t>0, \\ U_t^\tau(0)=\A^\tau U_0. \end{cases}\label{AbS3}\end{equation} and by relabeling $V^\tau = U^\tau_t$ we can further rewrite \begin{equation}\begin{cases}M_\tau V^\tau_{t}(t) = M_\tau\mathcal{A}^\tau V^\tau(t), \ t>0, \\ V^\tau(0) = V_0 = \A^\tau U_0. \end{cases}\label{AbS4}\end{equation}

Now, since we are considering $U_0 \in \Hc$, which means $u_0,u_1 \in \D{A}$ and $u_2 \in \D{\p}$, we have $\A^\tau U_0 \in \Hb.$ Therefore, by Theorem \ref{wp-apb-he} and Remark \ref{rmkh2} we get \begin{equation}\label{uttt}\|\sqrt{\tau}u^\tau_{ttt}(t)\|_2^2 \leqslant \|T^\tau(t)V_0\|\Hbn^2 \leqslant \overline{M_1}^2 \|V_0\|\Hbn^2 \leqslant \dfrac{\overline{K}}{\tau} \|U_0\|^2_{\mathbb{H}_2},\end{equation} for all $t\in[0,T],$ where $\overline{K}$ does not depend on $\tau.$ \newpage
We also obtain
\begin{equation}\label{ttt}
\gamma^\tau\tau\int_{0}^{t}\|u^\tau_{ttt}(s)\|_2^2ds \leqslant C\|U_0\|^2_{\mathbb{H}_2},
\end{equation}
where $C$ does not depend on $\tau.$
\begin{proof}
	Apply \eqref{en} to time derivatives. This gives 
	$$\gamma \int_0^T |u^\tau_{ttt}|^2 \leq C | A^{1/2} u^\tau_{tt}(0)|^2 + |A^{1/2} u^\tau_t(0)|^2 + \tau |u^\tau_{ttt}(0) |^2 $$
	From the  equation read of $\tau u^\tau_{ttt}(0) $
	$$ \tau |u^\tau_{ttt}(0)|^2 \leq \frac{C}{\tau} [ |u_2|^2 + |Au_0|^2 + |Au_1|^2 ] $$
	This gives  the conclusion in \eqref{ttt}.
\end{proof}
\noindent {\bf Step 4: Collecting the estimates}

By adding \eqref{s22} and \eqref{s12} and using \eqref{ttt} and remark \ref{rmkh2} we conclude
\begin{align}
\| A^{1/2} x^\tau(t)\|_2^2 + \|A^{1/2}x^\tau_t(t)\|_2^2 \leqslant \tau C\|U_0\|^2_{\mathbb{H}_2}.
\end{align}
 This finishes the proof of part (a) of Theorem \ref{cr}.

{\bf Proof of part (b)-strong convergence}.
This amounts to showing that given $U_0 \in \Ha^{\tau} $ and given $\varepsilon > 0 $  there exist $\delta > 0$ such that if $\tau < \delta$ then $$\text{$\|P T^\tau(t) U_0 - T(t)PU_0\|^2_{\mathbb{H}_0^0} < \varepsilon$} $$
for all times $t>0.$ The strategy is prove that for any given fixed time $T$ the above inequality is true and then to choose a suitable $T$ such that for $t > T$ the energy is still bounded above by $\varepsilon.$

\

{\bf Step 1: Finite time.} Let $U_0 \in \mathbb{H}_0$ and $T >0.$ Let $\varepsilon > 0 $ be arbitrary. 


Since $\mathbb{H}_2$ is dense in $\mathbb{H}_0$, if we define $$\varepsilon' =\dfrac{ \varepsilon }{ 2(M+M_0)},$$ we can find $U_{\varepsilon'} \in \mathbb{H}_2$ such that $$\|U_0-U_{\varepsilon'}\|^2_{\tau,0} <\varepsilon'.$$ 

Define $$\delta = \delta_\varepsilon = \dfrac{\varepsilon}{2C\|U_{\varepsilon'}\|^2_{\mathbb{H}_2}}$$ (where $C$ comes from Step 4 in the proof of part (a) considering $U_{\varepsilon'}$ as the initial condition) and notice that $\delta \to 0$ as $\varepsilon \to 0.$ 

Then, for $\tau < \delta$ we estimate by using \eqref{hh1}, \begin{align*} \|P T^{\tau}(t) U_0\!\!- T(t)PU_0\|^2_{P(\mathbb{H}_0)} & \leqslant \|P T^{\tau}(t)(U_0\!\!-\!\!U_{\varepsilon'}) - T(t)P(U_0\!\!-\!\!U_{\varepsilon'})\|^2_{\Ha^0} + \|P T^{\tau}(t) U_{\varepsilon'}\!\!-\!\!T(t)PU_{\varepsilon'}\|^2_{\Ha^0} \\ &\leqslant \|T^{\tau}(t)\|^2_{\mathcal{L}(\mathbb{H}_0^\tau)}\|U_0\!\!-\!\!U_{\varepsilon'}\|^2_{\tau,0}  + \|T(t)\|^2_{\mathcal{L}(\Ha^0)}\|U_0\!\!-\!\!U_{\varepsilon'}\|^2_{\tau,0}  + C\tau\|U_{\varepsilon'}\|^2_{\mathbb{H}_2} \\
&< \varepsilon'(M+M_0)+C\delta\|U_{\varepsilon'}\|^2_{\mathbb{H}_2}
 < \varepsilon .
\end{align*}

\ifdefined\xxxxxx
Since $\mathbb{H}_2$ is dense in $\mathbb{H}_0$, if we define $$\varepsilon' =\dfrac{ \varepsilon }{ 3MM_0}$$ we can find $U_{\varepsilon'} \in \mathbb{H}_2$ such that $$\|U_0-U_{\varepsilon'}\|^2_{\tau,0} <\varepsilon'.$$ 

Define $$\delta = \delta_\varepsilon = \left(
\dfrac{\varepsilon}{3C + 1}\right)^2$$ (where $C$ comes from Step 4 in the proof of part (a) considering $U_{\varepsilon'}$ as the initial condition) and notice that $\delta \to 0$ as $\varepsilon \to 0.$ 

Then, for $\tau < \delta$ we estimate, \begin{align*} \|P T^{\tau}(t) U_0\!\!- T(t)PU_0\|^2_{P(\mathbb{H}_0)} & \leqslant \|P T^{\tau}(t)(U_0\!\!-\!\!U_{\varepsilon'}) - T(t)P(U_0\!\!-\!\!U_{\varepsilon'})\|^2_{\Ha^0} + \|P T^{\tau}(t) U_{\varepsilon'}\!\!-\!\!T(t)PU_{\varepsilon'}\|^2_{\Ha^0} \\ &\leqslant \|T^{\tau}(t)\|^2_{\mathcal{L}(\mathbb{H}_0)}\|U_0\!\!-\!\!U_{\varepsilon'}\|^2_{\tau,0}  + \|T(t)\|^2_{\mathcal{L}(\Ha^0)}\|U_0\!\!-\!\!U_{\varepsilon'}\|^2_{\tau,0}  + C{\tau} ||U_{\epsilon'}||_{\tau,2}   \leqslant \varepsilon .
\end{align*}

Since $\varepsilon$ is arbitrary, the proof for finite time follows.

\fi
\

\noindent {\bf Step 2: Infinite time.} Integrating Equation \eqref{s21} in time from $s$ to $t$ we have {\small \begin{align*} I(x) &= \dfrac{\alpha}{2}\| x^\tau_t(t) \|_2^2+\dfrac{c^2}{2}\|\p x^\tau(t) \|_2^2 = \left[\dfrac{\alpha}{2}\|x^\tau_t(s)\|_2^2+\dfrac{c^2}{2}\|\p x^\tau(s)\|_2^2\right] \\ &- \delta\int_s^t \|\p x^\tau_t(\sigma)\|_2^2d\sigma  - \tau\int_s^t(u^\tau_{ttt}(\sigma),x^\tau_t(\sigma))d\sigma - \tau c^2\int_s^t (\p u^\tau_t(\sigma),\p x_t(\sigma))d\sigma \\ &\leqslant \left[\dfrac{\alpha}{2}\|x^\tau_t(s)\|_2^2+\dfrac{c^2}{2}\|\p x^\tau(s)\|_2^2\right] + \tau\int_s^t(u^\tau_{ttt}(\sigma),x^\tau_t(\sigma))d\sigma + \tau c^2\int_s^t (\p u^\tau_t(\sigma),\p x_t(\sigma))d\sigma \\ &\leqslant \left[\dfrac{\alpha}{2}\|x^\tau_t(s)\|_2^2+\dfrac{c^2}{2}\|\p x^\tau(s)\|_2^2\right]  + \tau\int_s^t[\|\sqrt{\tau}u^\tau_{ttt}(\sigma)\|_2^2+\|x^\tau_t(\sigma)\|_2^2]d\sigma \\&+ \tau c^2\int_s^t [\|\p u^\tau_t(\sigma)\|_2^2+\|\p x_t(\sigma)\|_2^2]d\sigma. \end{align*}}

Now observe that all the terms on the right hand side above (both inside and outside the integral) are uniformly exponentially stable. Therefore, there exist positive constants $L_1, L_2, a, b$ such that \begin{align*}I(x) &\leqslant \left[\dfrac{\alpha}{2}\|x^\tau_t(s)\|_2^2+\dfrac{c^2}{2}\|\p x^\tau(s)\|_2^2\right]  + \tau\int_s^\infty[\|\sqrt{\tau}u^\tau_{ttt}(\sigma)\|_2^2+\|x^\tau_t(\sigma)\|_2^2]d\sigma\\&+ \tau c^2\int_s^\infty [\|\p u^\tau_t(\sigma)\|_2^2+\|\p x_t(\sigma)\|_2^2]d\sigma\leqslant L_1e^{-as} + \dfrac{L_2}{b}e^{-bs} < \varepsilon,\end{align*} as long as $$s > T_{1,\varepsilon} \equiv \max\left\{-\frac{1}{a}\ln\left(\dfrac{\varepsilon}{2L_1}\right),-\frac{1}{b}\ln\left(\dfrac{\varepsilon b}{2L_2}\right)\right\}.$$

\

Similar estimates are valid when one integrates \eqref{s11} in time from $s$ to $t$. This will then gives rise to an $T_{2,\varepsilon}$.

Thus taking $T = T_\varepsilon \equiv \max\{T_{1,\varepsilon},T_{2,\varepsilon}\}$ in Step 1 and combining it with control of the tail of the integral leads to the convergence in \eqref{convn} uniformly for all $t \geqslant 0.$
This completes the proof of part (b) of  Theorem \ref{cr} . 

\subsection{Proof of Proposition \ref{sd}}

The uniform bounds imply, among other things, that there exist $z_1$ and $z_2$ such that \begin{equation}\begin{cases} u_t^\tau \to z_1 \ &\mbox{weakly}^\ast \ \mbox{in} \ L^\infty(0,T;\D{\p}), \\ \tau^{1/2}u_{tt}^\tau \to z_2 \ &\mbox{weakly}^\ast \ \mbox{in} \ L^\infty(0,T;L^2(\Omega)).  \end{cases}\end{equation}

An argument of Distributional Calculus shows that $u_{tt}^\tau \to z_{tt}$ in $H^{-1}(0,T;L^2(\Omega))$ and therefore $\tau^{1/2}u_{tt}^\tau \to \tau^{1/2}z_{tt} \to 0$ in $H^{-1}(0,T;L^2(\Omega).$ Uniqueness of the limit then leads to the conclusion.

\subsection{Proof of Proposition \ref{spec2}}

Recall that $\{\mu_n\}_{n\in\mathbb{N}}$ is the set of eigenvalues of $A$ and since $A$ is unbounded we can assume $\mu_n \to \infty$ as $n \to \infty.$ Proving the Proposition \ref{spec2}  amounts to the study  of  spectrum of $\A$ on the space $\mathbb{H}_0^0$.

\begin{lemma}\label{coincide_spec} 
	$$\sigma_p(\A) = \sigma_p(\A^\ast) = \{\lambda \in \mathbb{C}; \ \alpha \lambda^2 + \delta \mu_n \lambda + c^2 \mu_n = 0, \ n \in \mathbb{N}\} = \left\{\dfrac{-\delta\mu_n \pm \sqrt{\delta^2\mu_n^2-4\alpha c^2\mu_n}}{2\alpha}, \ n \in \mathbb{N}\right\}.$$ 
\end{lemma}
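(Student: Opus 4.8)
The plan is to diagonalize against the eigenbasis of $A$. Since $A$ is positive self-adjoint with compact resolvent, there is an orthonormal basis $\{e_n\}_{n\in\mathbb{N}}$ of $\Ls$ with $Ae_n = \mu_n e_n$; these same vectors are mutually orthogonal in $\D{\p}$ with $\|e_n\|_{\D{\p}}^2 = \langle Ae_n,e_n\rangle = \mu_n$, and they are complete there. Hence $\mathbb{H}_0^0 = \D{\p}\times\D{\p}$ decomposes as the orthogonal sum of the two-dimensional $\A$-invariant subspaces $V_n = \mathrm{span}\{(e_n,0),(0,e_n)\}$, and the eigenvalue equation $\A\Phi=\lambda\Phi$ decouples across these blocks.

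First I would settle $\sigma_p(\A)$ by the direct Fourier computation. Writing $\Phi=(u,v)$ with $u=\sum_n a_n e_n$ and $v=\sum_n b_n e_n$, the first row of $\A$ forces $b_n=\lambda a_n$, while the second row $-\alpha^{-1}A(c^2u+\delta v)=\lambda v$ yields the scalar relations $(\alpha\lambda^2+\delta\mu_n\lambda+c^2\mu_n)a_n=0$ for every $n$. A nontrivial eigenvector therefore requires $\alpha\lambda^2+\delta\mu_n\lambda+c^2\mu_n=0$ for at least one $n$; here one checks that the degenerate value $\lambda=-c^2/\delta$ (for which $c^2+\delta\lambda=0$) forces the trivial solution and is excluded, consistent with its role as continuous spectrum in Proposition \ref{spec2}. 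Conversely each root of the quadratic produces the genuine eigenvector $(e_n,\lambda e_n)$, which lies in $\D{\A}$ because $e_n$ is smooth (so $c^2 e_n+\delta\lambda e_n\in\D{A^{3/2}}$). This yields the first and the explicit last equality, with roots $\tfrac{-\delta\mu_n\pm\sqrt{\delta^2\mu_n^2-4\alpha c^2\mu_n}}{2\alpha}$.

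It remains to identify $\sigma_p(\A^\ast)$. I would compute the adjoint explicitly with respect to the inner product $\langle(u_1,v_1),(u_2,v_2)\rangle = \langle A^{1/2}u_1,A^{1/2}u_2\rangle+\langle A^{1/2}v_1,A^{1/2}v_2\rangle$: testing $\langle\A(u,v),(\phi,\psi)\rangle=\langle(u,v),\A^\ast(\phi,\psi)\rangle$ and moving powers of $A$ across via self-adjointness gives $\A^\ast(\phi,\psi)=\bigl(-\alpha^{-1}c^2 A\psi,\ \phi-\alpha^{-1}\delta A\psi\bigr)$. The same eigenbasis decomposes $\A^\ast$ into the $2\times2$ blocks on each $V_n$, and a short check shows these are exactly the transposes of the blocks of $\A$ (the uniform scalar weight $\mu_n$ on $V_n$ makes the Gram matrix a multiple of the identity, so the matrix of the adjoint in this basis is the plain transpose). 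Since a matrix and its transpose share a characteristic polynomial, the eigenvalue equation for $\A^\ast$ reduces to the identical quadratic $\alpha\lambda^2+\delta\mu_n\lambda+c^2\mu_n=0$, giving $\sigma_p(\A^\ast)=\sigma_p(\A)$.

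The main obstacle I anticipate is the bookkeeping for $\A^\ast$: one must carry the weighted $\D{\p}$-inner product correctly through the integration by parts so that the factors of $A$ land on the intended component, and then confirm that the scalar weight $\mu_n$ on each $V_n$ is genuinely harmless for the adjoint. A secondary point needing care is completeness—verifying that any eigenvector is supported on the $V_n$, so that no eigenvalue lying outside $\bigcup_n\{\lambda:\alpha\lambda^2+\delta\mu_n\lambda+c^2\mu_n=0\}$ is overlooked—together with the explicit exclusion of the degenerate value $\lambda=-c^2/\delta$.
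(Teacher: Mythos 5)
Your proposal is correct and takes essentially the same route as the paper: both reduce the eigenvalue equation to the quadratic $\alpha\lambda^2+\delta\mu_n\lambda+c^2\mu_n=0$ by passing to the eigenbasis of $A$, and both compute $\A^\ast$ explicitly with respect to the weighted $\D{\p}\times\D{\p}$ inner product, arriving at the same formula $\A^\ast(\phi,\psi)=\bigl(-c^2\alpha^{-1}A\psi,\ \phi-\delta\alpha^{-1}A\psi\bigr)$. The only cosmetic differences are that you conclude $\sigma_p(\A^\ast)=\sigma_p(\A)$ by a blockwise transpose argument on the invariant subspaces $V_n$ (where the Gram matrix is $\mu_n I$) instead of re-running the eigenvector computation for $\A^\ast$, and that you explicitly exclude the degenerate value $\lambda=-c^2/\delta$, which the paper passes over when it divides by $c^2+\delta\lambda$.
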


\begin{proof}
	Since  $A$ is  a positive self-adjoint operator with compact resolvent,  $$\sigma(A) = \sigma_p(A) \subset \mathbb{R}_+^\ast.$$ The spectrum of $A$ is countable and positive. So we assume the point spectrum is then a sequence $(\mu_n)$ such that $\mu_n \to +\infty$ as $n \to +\infty.$ We shall consider the operator $\A$ acting on $\mathcal{D}(A^{1/2}) \times \mathcal{D}(A^{1/2} ) $ with the domain $$\D{\A}= \{ (u,v) \in  \mathcal{D}(A^{1/2}) \times \mathcal{D}(A^{1/2} ); c^2 u + \delta v \in \mathcal{D}(A^{3/2}) \}.$$
	
	Let $\varphi = (\varphi_1,\varphi_2)^T \in \D{\A} .$ 
	 We seek to the describe the values of $\lambda \in \mathbb{C}$ such that \begin{equation}\label{ev}
	\A \varphi = \lambda \varphi.
	\end{equation} We compute: \begin{align*}
	\A \varphi = \left(\begin{array}{cc}
	0  & I \\
	-c^2\alpha^{-1}A & -\delta\alpha^{-1}A
	\end{array}\right)\left(\begin{array}{c}
	\varphi_1 \\
	\varphi_2
	\end{array}\right) = \left(\begin{array}{c}
	\varphi_2 \\
	-c^2\alpha^{-1}A\varphi_1 -\delta\alpha^{-1}A\varphi_2
	\end{array}\right).
	\end{align*} Therefore the equation \eqref{ev} will be satisfied if and only if $$\varphi_2 = \lambda \varphi_1$$ and $$c^2A\varphi_1 + \delta A\varphi_2 = -\alpha \lambda \varphi _2$$ which is the same as $$c^2 A \varphi_1 + \delta\lambda A\varphi_1 = -\alpha \lambda^2 \varphi_1$$ or further $$A\varphi_1 = \dfrac{-\alpha \lambda^2}{c^2 + \delta \lambda} \varphi_1.$$ The last equation means that $\varphi_1$ is an eigenvector of $A$ and because of that must be associated with some eigenvalue $\mu_n.$ Therefore, the relation between $\lambda$ and $\mu_n$ can be easily derived to be the quadratic equation $$\alpha \lambda^2 + \delta \mu_n \lambda + c^2 \mu_n = 0$$ from where follows that \begin{equation}\lambda_n = -\dfrac{\delta \mu_n}{2\alpha} \pm \dfrac{\sqrt{\delta^2\mu_n^2 - 4\alpha c^2\mu_n}}{2\alpha}.\label{qf}\end{equation}
	
	\
	
		We now characterize the point spectrum of $\A^\ast.$  Keeping in mind the following facts:
		
(i)  $A$ is self-adjoint in $H = L^2(\Omega).$ 

(ii) Fractional powers preserve self-adjointness.

	\noindent We  begin by computing  $\A^\ast.$\\
	Let $\varphi,\psi \in \D{\p} \times \D{\p}$, $\varphi = (\varphi_1,\varphi_2)^T$ and $\psi = (\psi_1,\psi_2)^T$. We have \begin{align*}
	\left(\A\varphi,\psi\right)_{ \D{\p} \times \D{\p}} &= \left(\left(\begin{array}{cc}
	0  & I \\
	-c^2\alpha^{-1}A & -\delta\alpha^{-1}A
	\end{array}\right)\left(\begin{array}{c}
	\varphi_1 \\
	\varphi_2
	\end{array}\right),\left(\begin{array}{c}
	\psi_1 \\
	\psi_2
	\end{array}\right)\right) _{ \D{\p} \times \D{\p}} \\ &=  \left(\left(\begin{array}{c}
	\varphi_2 \\
	-c^2\alpha^{-1}A\varphi_1 -\delta\alpha^{-1}A\varphi_2
	\end{array}\right),\left(\begin{array}{c}
	\psi_1 \\
	\psi_2
	\end{array}\right)\right) _{ \D{\p} \times \D{\p}} \\ &= \left(\varphi_2,\psi_1\right)_{\D{\p}} + \left(	-c^2\alpha^{-1}A\varphi_1 -\delta\alpha^{-1}A\varphi_2,\psi_2\right)_{\D{\p}} \\ &= \left(\varphi_2,\psi_1\right)_{\D{\p}} + \left(\varphi_1,-c^2\alpha^{-1} A \psi_2\right)_{\D{\p}} +  \left(\varphi_2,-\delta\alpha^{-1}A\psi_2\right)_{\D{\p}} \\ &= \left(\varphi_1,-c^2\alpha^{-1} A \psi_2\right)_{\D{\p}} +  \left(\varphi_2,\psi_1-\delta\alpha^{-1}A\psi_2\right)_{\D{\p}} \\ &= \left(\left(\begin{array}{c}
	\varphi_1 \\
	\varphi_2
	\end{array}\right),\left(\begin{array}{c}
	-c^2\alpha^{-1} A \psi_2 \\
	\psi_1-\delta\alpha^{-1}A\psi_2
	\end{array}\right)\right) _{ \D{\p} \times \D{\p}} \\ &= \left(\left(\begin{array}{c}
	\varphi_1 \\
	\varphi_2
	\end{array}\right),\left(\begin{array}{cc}
	0  & -c^2\alpha^{-1} A \\
	I & -\delta\alpha^{-1}A
	\end{array}\right)\left(\begin{array}{c}
	\psi_1 \\
	\psi_2
	\end{array}\right)\right) _{ \D{\p} \times \D{\p}}.
	\end{align*} Therefore $$\A^\ast = \left(\begin{array}{cc}
	0  & -c^2\alpha^{-1} A \\
	I & -\delta\alpha^{-1}A
	\end{array}\right),$$
	with $$\D{\A^{\ast}} = \{ (u, v) \in \mathcal{D}(A^{1/2}) \times \mathcal{D}(A^{1/2} ), c^2 u + \delta v \in \mathcal{D}(A^{3/2} ) \}. $$
	We then find out the point spectrum of $\A^\ast.$
	
	Let $\varphi = (\varphi_1,\varphi_2)^T \in \D{\A^\ast}.$ We seek to the describe the values of $\lambda \in \mathbb{C}$ such that \begin{equation}\label{ev1}
	\A^\ast \varphi = \lambda \varphi.
	\end{equation} We compute: \begin{align*}
	\A^\ast \varphi = \left(\begin{array}{cc}
	0  & -c^2\alpha^{-1} A \\
	I & -\delta\alpha^{-1}A
	\end{array}\right)\left(\begin{array}{c}
	\varphi_1 \\
	\varphi_2
	\end{array}\right) = \left(\begin{array}{c}
	-c^2\alpha^{-1}A\varphi_2 \\
	\varphi_1 -\delta\alpha^{-1}A\varphi_2
	\end{array}\right).
	\end{align*} Therefore the equation \eqref{ev1} will be satisfied if and only if $$-c^2\alpha^{-1}A\varphi_2 = \lambda \varphi_1$$ and $$\varphi_1 -\delta\alpha^{-1}A\varphi_2 =  \lambda \varphi _2.$$ 
	 Decoupling gives: $$A\varphi_2 = \dfrac{-\alpha \lambda^2}{c^2 + \delta \lambda} \varphi_2$$ and the last equation means that $\varphi_2$ is an eigenvector of $A$ and because of that must be associated with some eigenvalue $\mu_n$ through the quadratic equation $$\alpha \lambda^2 + \delta \mu_n \lambda + c^2 \mu_n = 0$$ which implies that $$\sigma_p(\A^\ast) = \sigma_p(\A),$$  completing the proof. 
	
	\end{proof}
	
	Part (a) then follows directly from Lemma \ref{coincide_spec} because we know that $\lambda \in \sigma_r(\A)$ if and only if $\overline{\lambda} \in \sigma_p(\A^\ast) (= \sigma_p(\A),$ in our case). However, we know that if $\overline{\lambda}\in \sigma_p(\A)$, so is $\lambda.$ Therefore, since $\sigma_p(\A) \cap \sigma_r(\A) = \emptyset$, it follows $\sigma_r(\A) = \emptyset.$
	
	Now since the parameters $\delta, \alpha, c^2 >0$ are fixed, we can see that $\sigma_p(\A)$ is eventually real, which means that no matter how we pick those parameters, since $\mu_n \to +\infty$ as $n \to \infty$ we will always be able to find an index $N$ such that from that index on all the eigenvalues will be real.
	
	It is also clear to see that the point spectrum of $\A$ is on the left side of the complex plane. In fact, it follows from the formula \eqref{qf} that in case $\lambda_n$ is complex we have $$\mbox{Re}(\lambda_n) = -\dfrac{\delta \mu_n}{2\alpha} <0.$$ 
	
	For the real ones, the $``-"$ case of the formula \eqref{qf} we have nothing to check because clearly $\lambda_n < 0.$ For the $``+"$ case we just notice that $$\sqrt{\delta^2\mu_n^2 - 4\alpha c^2\mu_n} < \delta \mu_n$$ and the strict inequality guarantees that $\lambda_n < 0$. Therefore, in order to describe the continuous spectrum of $\A$ we just analyze the limit $$\lim\limits_{n \to \infty} \dfrac{-\delta\mu_n \pm \sqrt{\delta^2\mu_n^2-4\alpha c^2\mu_n}}{2\alpha}.$$
	
	Two basic limit arguments show that $$\lim\limits_{n \to \infty} \dfrac{-\delta\mu_n + \sqrt{\delta^2\mu_n^2-4\alpha c^2\mu_n}}{2\alpha} = -\dfrac{c^2}{\delta}$$ and $$\lim\limits_{n \to \infty} \dfrac{-\delta\mu_n - \sqrt{\delta^2\mu_n^2-4\alpha c^2\mu_n}}{2\alpha} = -\infty,$$
	which implies  $\sigma_c(\A) \supset  \left\{-\dfrac{c^2}{\delta}\right\}$,  since $\sigma(\A)$ is closed and  $\sigma_r(\A) = \emptyset$  with  $-\delta^{-1}c^2$  not an eigenvalue of $\A.$
	In order to complete the proof of part (b) we need to show that $-\dfrac{c^2}{\delta} $ is {\it  the only element} in the continuous spectrum of $\A$. 
	To establish this we shall show that any $\lambda \notin \sigma_p(\A) \cup \biggl\{-\dfrac{c^2}{\delta}\biggr\}$  is in the resolvent set of $\A$. 
	Let $(f,g)^T \in \D{\p} \times \D{\p}$. We need to prove that there exists $(u,v)^T \in \D{\A}$ such that $$\A(u,v)^T - \lambda(u,v)^T = (f,g)^T.$$ 
	After writing down explicitly the equation we obtain 
	\begin{eqnarray}
	\lambda u - v = f \notag \\
	\alpha \lambda v + A( c^2 u + \delta v) = \alpha g 
	\end{eqnarray}
	which leads to solvability of 
	$$ (c^2 +\delta \lambda ) A u + \alpha \lambda^2 u= \alpha \lambda f + \alpha g + \delta A f, ~in~[ \mathcal{D}(A^{1/2})] '  $$ 
	or equivalently 
	\begin{equation}
	(c^2 +\delta \lambda ) u + \alpha \lambda^2 A^{-1}  u =  \alpha \lambda  A^{-1} f + \alpha A^{-1} g + \delta f  \equiv F
	\end{equation}
	and further because $ c^2 +\delta \lambda \ne 0, $ 
	\begin{equation}\label{res}
	u + \frac{\alpha \lambda^2}{c^2 + \delta \lambda}  A^{-1}  u=  (c^2+ \delta \lambda)^{-1} F.
	\end{equation}
	Note that $F \in \mathcal{D}(A^{1/2}) $ and we are looking for a solution $u \in \mathcal{D}(A^{1/2} )$. 
	Since $A^{-1}$ is compact on $L(\mathcal{D}(A^{1/2}))$, unique solvability of (\ref{res})  if and only if the operator 
	$$I +  \frac{\alpha \lambda^2}{c^2 + \delta \lambda}  A^{-1}$$  is injective. 
	On the other hand, the latter takes place if and only if $ - \dfrac{\alpha \lambda^2}{c^2 + \delta \lambda} \notin \sigma_p(A) $. This is also true due to the fact that $\lambda \notin \sigma_p(\A) $. 
	In view of the above, we obtain $u \in \mathcal{D}(A^{1/2} ) $ and therefore $v = \lambda u -f \in \mathcal{D}(A^{1/2})$.
	
	To conclude we need to assert that $(u,v)^T \in \mathcal{D}(\A ) $. For the latter we just notice that $ A(c^2 u + \delta v ) = \alpha g -\alpha \lambda v \in \mathcal{D}(A^{1/2} ) $-as desired by the characterization of the domain of $\A $. 
	The proof  of the Proposition is thus complete.

\subsection{Proof of Proposition \ref{spec1}} \label{secc}

\hspace{.5cm}As in the proof of Proposition \ref{spec2}, proving proposition \ref{spec1} relies on the analysis  of point spectrum of $\A^\tau$ along with the asymptotics. 
To begin with, the point spectrum of $\A^\tau$ and $(\A^\tau)^\ast$ coincide and it is given by the set $$\sigma_p(\A^\tau) = \sigma_p((\A^\tau)^\ast) = \{\lambda \in \mathbb{C}; \ \tau\lambda^3 + \alpha \lambda^2 + b^\tau\mu_n \lambda + c^2 \mu_n = 0, \ n \in \mathbb{N}\},$$ which is a consequence of basic algebraic manipulation.

The exact same argument as for Part (a) in Proposition \ref{spec2} shows Part (a) here.

Now, with empty residual spectrum we know that the points in the continuous spectrum, if any, needs  to be  in the approximate point spectrum.  By using the exact same process as in \cite{TrigMGT} (Theorem 5.2, Part ($b$), ($b_1$) and ($b_2$), p.1913 and 1914) one can show that $-\dfrac{c^2}{b^\tau}$ is an eigenvalue of $\A^\tau$ in case $\gamma^\tau = 0$ and a limit of eigenvalues in case $\gamma^\tau > 0.$ Thus  $-\dfrac{c^2}{b^\tau} \in \sigma_c(\A^\tau)$ in case $\gamma^\tau> 0.$
We shall show now that $-\dfrac{c^2}{b^\tau} $ coincides with the point  in continuous spectrum  $\sigma_c(\A^\tau)$ in case $\gamma^\tau> 0$. This is to say  $\sigma_c(\A^\tau) = \left\{-\dfrac{c^2}{b^\tau}\right\}$ . For this, it is sufficient to show that every $\lambda \in \mathbb{C} $  different from $- \dfrac{c^2}{b^{\tau} } $  and outside  the point spectrum of $\A^{\tau} $ belongs to the resolvent set. We need to prove that there exist solution $(u,v,w)^T \in \D{\A^\tau}$
As before, we consider the system: 
\begin{eqnarray}\label{cont}
v - \lambda u =f\in \mathcal{D}(A^{1/2} ) \notag \\
w -\lambda v =g \in \mathcal{D}(A^{1/2} )\notag \\
- \tau^{-1} [ c^2 Au + b^{\tau} Av + \alpha w ] - \lambda w = h \in \Ls. 
\end{eqnarray}Collecting the terms yields: 
$$(\tau^{-1} c^2 + \lambda \tau^{-1} b^{\tau} ) Au + ( \lambda^3 + \tau^{-1} \alpha \lambda^2)u = \tau^{-1}( \alpha \lambda +b^{\tau}) Af + \tau^{-1} \alpha A g +\lambda^2 f + \lambda g + h, $$ 
where the equation is defined on $[\mathcal{D}(A^{1/2})]' $. 
Since $\lambda \ne -\dfrac{c^2}{b^{\tau} } $, the above can be written as 
\begin{equation}\label{cont1}
 u + d(\lambda, \tau) A^{-1} u = F (f,g,h) \in \mathcal{D}(A^{1/2}), 
\end{equation}
where $$d(\lambda, \tau ) = \dfrac{ \lambda^3 + \tau^{-1} \alpha \lambda^2}{\tau^{-1} c^2 + \lambda \tau^{-1} b^{\tau}}  = \dfrac{\lambda^3 \tau + \alpha \lambda^2 }{c^2 + \lambda b^{\tau} }$$
and 
$$F (f,g,h)=\tau^{-1}( \alpha \lambda +b^{\tau}) Af + \tau^{-1} \alpha A g +\lambda^2 f + \lambda g + h.$$
Since $A^{-1} $ is compact  in $L(\mathcal{D}(A^{1/2})),$ unique solvability [for $u \in \mathcal{D}(A^{1/2}) $  of (\ref{cont1}) is equivalent to  the injectivity of $I + d(\lambda, \tau ) A^{-1} $. 
The latter is equivalent to the fact that $- d(\lambda, \tau) \notin \sigma_p(A^\tau),$ which in turn is equivalent to 
$\mu_n + d(\lambda, \tau) \ne 0 $. This last condition is guaranteed by the fact that $\lambda \notin \sigma_p(\A^\tau).$ Thus there exists a unique $u\in \mathcal{D}(A^{1/2}) $ 
solving (\ref{cont1}). Going back to (\ref{cont}) we obtain the improved regularity $v \in \mathcal{D}(A^{1/2} ), w\in \mathcal{D}(A^{1/2}) $ and also 
$c^2 u + b^{\tau} v \in \mathcal{D}(A) $.  Hence $(u,v,w)^T \in \mathcal{D}(\A^{\tau}) $ as desired. The proof of  equivalence $\sigma_c(\A^{\tau}) = \{ - \dfrac{c^2}{b^{\tau} } \}$ is completed.


In order to complete the proof of Proposition \ref{spec1} it suffices to prove the  part (c). Here, the aim is to show that when $\tau \rightarrow 0 $ the  hyperbolic branch of the spectrum of $\A^{\tau}$  escapes to $-\infty$. 
For this we show that for $n$ large, the equation \begin{equation}\label{ee} \tau \lambda^3 + \alpha \lambda^2 + (b^\tau \mu_n)\lambda + c^2 \mu_n = 0.\end{equation}  has two complex roots whose imaginary parts approach $\pm\infty.$

The argument is as follows: define a number $\theta_n$ such that $$\theta_n \approx -\frac{\gamma^\tau c^4}{\mu_n b^3} \ \mbox{for} \ n \ \mbox{large.}$$

We claim that $$-\dfrac{c^2}{b^\tau} + \theta_n \approx \lambda_n^{0,\tau} \ \mbox{for} \ n \ \mbox{large,}$$ that is, for $n$ large $-\dfrac{c^2}{b^\tau} + \theta_n$ is \emph{almost} a root of \eqref{ee}.

Indeed, notice that \eqref{ee} can be rewritten as  $$\tau \lambda^3 + \alpha \lambda^2 + (b^\tau \mu_n)\lambda + c^2 \mu_n  = \left(\lambda + \dfrac{c^2}{b^\tau} - \theta_n\right)q_n(\lambda) + r_n(\lambda),$$ where $$q_n(\lambda) = \tau \lambda^2 + (\gamma^\tau + \tau \theta_n)\lambda + b^\tau\mu_n - (\gamma^\tau + \tau\theta_n)\left(\dfrac{c^2}{b^\tau}-\theta_n\right)$$ and $$r_n(\lambda) = (\gamma^\tau + \tau\theta_n)\left(\dfrac{c^2}{b^\tau}-\theta_n\right)^2 + b^\tau\mu_n\theta_n.$$ Since $\mu_n\rightarrow\infty$ as $n\rightarrow\infty,$ we have $\theta_n\approx 0$ which implies
$$r_n(\lambda)\approx \gamma^\tau\dfrac{c^4}{b^{2\tau}}-\gamma^\tau\dfrac{c^4}{b^{2\tau}} =0 \ \mbox{for} \ n \ \mbox{large},$$
where we have used 
$b^\tau \mu_n \theta_n \approx - \dfrac{\gamma^{\tau} c^4}{b^{2\tau}} $.  
This proves the claim made above.

As a consequence,  for $n$ large we have $$\tau \lambda^3 + \alpha \lambda^2 + (b^\tau \mu_n)\lambda + c^2 \mu_n \approx \left(\lambda + \dfrac{c^2}{b^\tau} - \theta_n\right)\left(\tau \lambda^2 + (\gamma^\tau + \tau \theta_n)\lambda + b^\tau\mu_n - (\gamma^\tau + \tau\theta_n)\left(\dfrac{c^2}{b^\tau}-\theta_n\right)\right).$$ Therefore, for $n$ large the two other roots of the equation (which are complex) are approximately the two roots of $$\tau \lambda^2 + (\gamma^\tau + \tau \theta_n)\lambda + b^\tau\mu_n - (\gamma^\tau + \tau\theta_n)\left(\dfrac{c^2}{b^\tau}-\theta_n\right).$$

Then, a basic result for quadratic equation yields $$2\mbox{Re}(\lambda_{n}^{\tau,1}) = 2\mbox{Re}(\lambda_{n}^{\tau,2}) = -\dfrac{\gamma^\tau + \tau \theta_n}{\tau} \to -\dfrac{\gamma^\tau}{\tau} \ \mbox{as} \ n \to \infty$$ and
$$|\mbox{Img}(\lambda_{n}^{\tau,1})| = |\mbox{Img}(\lambda_{n}^{\tau,1})| \to +\infty \ \mbox{as} \ n \to \infty.$$

The proof of part (c) is then complete.

\section{Appendix}
\begin{lemma}(The energy identity) For all $U_0^{\tau} \in \Ha $ we have 
	\begin{equation}
	\label{eid} \dfrac{d}{dt} E_1^\tau(t) + \gamma^\tau \|u^\tau_{tt}(t)\|_2^2 = 0.
	\end{equation}
\end{lemma}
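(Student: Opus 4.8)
The plan is to prove \eqref{eid} first for smooth solutions—say with $U_0^\tau \in \mathcal{D}((\A^\tau)^2)$ so that every term below is classical—and then pass to general energy-level data by density, using the equi-boundedness of $T^\tau(t)$ on $\Ha^\tau$ established in Theorem \ref{wp-apb-he}. The heart of the argument is an algebraic reformulation of the equation tailored to the structure of $E_1^\tau$. Writing $\beta \equiv c^2/b^\tau$ for brevity and recalling that $\gamma^\tau = \alpha - c^2\tau(b^\tau)^{-1} = \alpha - \tau\beta$, I would first rewrite \eqref{MGTtau} as
\begin{equation}\label{reform}
\tau\big(u^\tau_{ttt} + \beta u^\tau_{tt}\big) + \gamma^\tau u^\tau_{tt} + b^\tau A\big(u^\tau_t + \beta u^\tau\big) = 0,
\end{equation}
which follows by substituting $\alpha = \gamma^\tau + \tau\beta$ into the second-order term and grouping the spatial terms as $c^2 A u^\tau + b^\tau A u^\tau_t = b^\tau A(u^\tau_t + \beta u^\tau)$. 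The point is that the combination $u^\tau_t + \beta u^\tau$ and the multiplier $u^\tau_{ttt} + \beta u^\tau_{tt}$ are precisely the quantities (and their time derivatives) appearing inside $E_1^\tau$.

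Next I would differentiate $E_1^\tau(t)$ term by term, obtaining
\begin{equation}\label{diff}
\frac{d}{dt}E_1^\tau(t) = b^\tau\big(\p(u^\tau_t+\beta u^\tau),\p(u^\tau_{tt}+\beta u^\tau_t)\big) + \tau\big(u^\tau_{tt}+\beta u^\tau_t, u^\tau_{ttt}+\beta u^\tau_{tt}\big) + \frac{c^2\gamma^\tau}{b^\tau}(u^\tau_t,u^\tau_{tt}).
\end{equation}
Using self-adjointness of $A$ I would rewrite the first term as $b^\tau\big(A(u^\tau_t+\beta u^\tau),\, u^\tau_{tt}+\beta u^\tau_t\big)$, and in the second term I would substitute from \eqref{reform} the identity $\tau(u^\tau_{ttt}+\beta u^\tau_{tt}) = -\gamma^\tau u^\tau_{tt} - b^\tau A(u^\tau_t + \beta u^\tau)$. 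The two contributions $b^\tau A(u^\tau_t+\beta u^\tau)$ then cancel exactly by symmetry of the real inner product, leaving only $-\gamma^\tau(u^\tau_{tt}+\beta u^\tau_t, u^\tau_{tt}) + \frac{c^2\gamma^\tau}{b^\tau}(u^\tau_t, u^\tau_{tt})$. Since $\beta = c^2/b^\tau$, the two cross terms $-\gamma^\tau\beta(u^\tau_t,u^\tau_{tt})$ and $+\frac{c^2\gamma^\tau}{b^\tau}(u^\tau_t,u^\tau_{tt})$ cancel as well, and what survives is exactly $-\gamma^\tau\|u^\tau_{tt}\|_2^2$, which is \eqref{eid}.

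The computation itself is short; the only genuine issue is regularity. The inner products $(A(u^\tau_t+\beta u^\tau), u^\tau_{tt}+\beta u^\tau_t)$ and the differentiation under the norms are legitimate only for sufficiently smooth trajectories, so the main—though standard—obstacle is the density step: one carries out the identity on a core of smooth solutions and then extends it to all $U_0^\tau \in \Ha$ by continuous dependence, exactly as in Lemma 3.1 of \cite{jmgt}. The structural fact the whole identity rests on is the self-adjointness of $A$ together with the observation that, along smooth trajectories, the domain characterization $c^2 u^\tau + b^\tau u^\tau_t \in \mathcal{D}(A)$ forces $u^\tau_t + \beta u^\tau \in \mathcal{D}(A)$, which is what legitimizes moving one power of $A^{1/2}$ across the inner product and thereby producing the cancellation of the two spatial terms.
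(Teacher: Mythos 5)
Your proposal is correct and follows essentially the same route as the paper: the paper derives the identity by testing the equation with $u^\tau_{tt}$ and with $u^\tau_t$ and combining the two resulting differential identities with weight $c^2/b^\tau$, which is exactly your single combined multiplier $u^\tau_{tt}+\tfrac{c^2}{b^\tau}u^\tau_t$ applied to the regrouped equation, and both arguments finish with the same density/continuous-dependence step from smooth data in $\mathcal{D}(\mathcal{A}^\tau)$. Your packaging via $\beta=c^2/b^\tau$ and the factored form of $E_1^\tau$ is a slightly cleaner presentation of the identical computation, and your regularity justification (that $c^2u^\tau+b^\tau u^\tau_t\in\mathcal{D}(A)$ is what legitimizes the integration by parts) matches the paper's.
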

\begin{proof}
We first consider strong  solutions with initial data in $\mathcal{D}(\A^{\tau})$.  This implies   $U^{\tau}(t)  \in \Ha $  and $u_t(t), u_{tt}(t) \in \mathcal{D}(A^{1/2}) $, $u_{ttt}(t) \in H $. For these elements  the following calculus is justifiable.\\
 Notice that the expansion of $E^{\tau}_1(t)$ is
	\begin{align}\label{ef2}
	E^{\tau}_1(t)&=\frac{\tau}{2}\|u^\tau_{tt}(t)\|_2^2+\frac{b^\tau}{2}\|\p u^\tau_t(t)\|_2^2 +\frac{c^4}{2b^{\tau}}\|\p u^\tau(t)\|_2^2\nonumber\\
	& + c^2(Au^\tau_t(t),u^\tau(t))+ \frac{\tau c^2}{ b^\tau} (u^\tau_{tt}(t),u^\tau_t(t)) + \frac{\alpha c^2}{2b^\tau}\|u^\tau_t(t)\|_2^2.
	\end{align}
	Firs, taking the $L^2-$inner product of \eqref{MGTtau} with $u^\tau_{tt}$ gives
	\begin{equation}\label{eid1}
	\frac{d}{dt}\left(\frac{\tau}{2}\|u^\tau_{tt}\|^2_2+c^2(A u^\tau_t,u^\tau)+\frac{b^\tau}{2}\|\p u^\tau_t\|^2_2
	\right)+\alpha\|u^\tau_{tt}(t)\|^2-c^2(A u^\tau_t(t),u^\tau_t(t))=0.
	\end{equation}
	Next similarly, taking the $L^2-$inner product of \eqref{MGTtau} with $u^\tau_{t}$ gives
	\begin{equation}\label{eid2}
	\frac{d}{dt}\left(
	\tau(u^\tau_{tt},u^\tau_t)+\frac{\alpha}{2}\|u^\tau_t\|^2_2+\frac{c^2}{2}\|\p u^\tau\|^2_2\right)-\tau\|u^\tau_{tt}(t)\|^2_2+b^\tau\|\p u^\tau_t(t)\|^2_2=0.
	\end{equation}
	Combining $\eqref{eid1}$ and $\dfrac{c^2}{b^\tau}\times\eqref{eid2}$, we get
	\begin{align}
	\dfrac{d}{dt}\bigg[\frac{\tau}{2}\|u^\tau_{tt}\|_2^2+\frac{b^\tau}{2}\|\p u^\tau_t\|_2^2+\frac{c^4}{2b^{\tau}}\|\p u^\tau\|_2^2+c^2(A u^\tau_t,u^\tau_t)&+\frac{\tau c^2}{b^\tau}(u^\tau_{tt},u^\tau_t)+\frac{\alpha c^2}{2b^\tau}\|u^\tau_t\|_2^2\bigg]\nonumber\\ &+(\alpha-\frac{c^2\tau}{b^\tau})\|u^\tau_{tt}(t)\|_2^2=0.
	\end{align}
	By \eqref{ef2} and the definition of $\gamma^\tau=\alpha-\dfrac{c^2\tau}{b^\tau}$, we obtain the identity
	$$\dfrac{d}{dt} E_1^\tau(t) + \gamma^\tau \|u^\tau_{tt}(t)\|_2^2 = 0.$$
\end{proof}It is equivalent to say that 
\begin{equation}\label{edp}
E_1^\tau(t) + \gamma^\tau\int_0^t \|u^\tau_{tt}(s)\|^2_2ds= E_1^\tau(0),
\end{equation}
and the final conclusion is obtained by evoking density of $\mathcal{D}(\A^{\tau}) $ in  $\Ha$.

		\bibliographystyle{abbrv} 
		\bibliography{mlib2.bib}

\begin{thebibliography}{10}

\bibitem{mem4}
M.~Alves, A.~Caixeta, M.~J. Silva, and J.~Rodrigues.
\newblock Moore–{Gibson}–{Thompson} equation with memory in a history
  framework: a semigroup approach.
\newblock {\em Zeitschrift für angewandte Mathematik und Physik}, 69(4):106,
  2018.
\newblock Publisher: Springer.

\bibitem{mem5}
A.~H. Caixeta, V.~N. Domingos~Cavalcanti, and I.~Lasiecka.
\newblock {On} long time behavior of {Moore}-{Gibson}-{Thompson} equation with
  molecular relaxation.
\newblock {\em Evolution Equations and Control Theory}, 5(4), 2016.

\bibitem{ap5}
C.~Cattaneo.
\newblock Sulla {Conduzione} {Del} {Calore}.
\newblock {\em Atti Sem. Mat. Fis. Univ. Modena}, 3:83--101, 1948.

\bibitem{ap4}
C.~Cattaneo.
\newblock A form of heat-conduction equations which eliminates the paradox of
  instantaneous propagation.
\newblock {\em Comptes Rendus}, 247:431, 1958.

\bibitem{crd82}
G.~Chen and D.~L. Russell.
\newblock A mathematical model for linear elastic systems with structural
  damping.
\newblock {\em Quarterly of Applied Mathematics}, 39(4):433--454, 1982.

\bibitem{trig89}
S.~P. Chen and R.~Triggiani.
\newblock Proof of extensions of two conjectures on structural damping for
  elastic systems.
\newblock {\em Pacific Journal of Mathematics}, 136(1):15--55, 1989.
\newblock Publisher: Mathematical Sciences Publishers.

\bibitem{HCP}
C.~Christov and P.~Jordan.
\newblock Heat conduction paradox involving second-sound propagation in moving
  media.
\newblock {\em Physical review letters}, 94(15):154301, 2005.
\newblock Publisher: APS.

\bibitem{liz1}
J.~A. Conejero, C.~Lizama, and F.~D.~A. Ródenas~Escribá.
\newblock Chaotic behaviour of the solutions of the {Moore}-{Gibson}-{Thompson}
  equation.
\newblock {\em Applied Mathematics and Information Sciences}, 9(5):2233--2238,
  2015.
\newblock Publisher: Natural Sciences Publishing.

\bibitem{coulo}
F.~Coulouvrat.
\newblock On the equations of nonlinear acoustics.
\newblock {\em J. Acoustique}, 5(321-359):52, 1992.

\bibitem{crig}
D.~G. Crighton.
\newblock Model equations of nonlinear acoustics.
\newblock {\em Annual Review of Fluid Mechanics}, 11(1):11--33, 1979.
\newblock Publisher: Annual Reviews 4139 El Camino Way, PO Box 10139, Palo
  Alto, CA 94303-0139, USA.

\bibitem{mem3}
F.~Dell'Oro, I.~Lasiecka, and V.~Pata.
\newblock The {Moore}–{Gibson}–{Thompson} equation with memory in the
  critical case.
\newblock {\em Journal of Differential Equations}, 261(7):4188--4222, 2016.
\newblock Publisher: Elsevier.

\bibitem{ap2}
F.~Ekoue, A.~F. d’Halloy, D.~Gigon, G.~Plantamp, and E.~Zajdman.
\newblock Maxwell-{Cattaneo} regularization of heat equation.
\newblock {\em World Academy of Science, Engineering and Technology}, 7:05--23,
  2013.

\bibitem{fatto}
H.~O. Fattorini.
\newblock {\em The {Cauchy} {Problem}}.
\newblock Addison Wesley, 1983.

\bibitem{ham}
M.~F. Hamilton, D.~T. Blackstock, and {others}.
\newblock {\em Nonlinear {Acoustics}}.
\newblock Academic Press, 1997.

\bibitem{jordan2}
P.~M. Jordan.
\newblock Nonlinear acoustic phenomena in viscous thermally relaxing fluids:
  {Shock} bifurcation and the emergence of diffusive solitons.
\newblock {\em The Journal of the Acoustical Society of America},
  124(4):2491--2491, 2008.
\newblock Publisher: ASA.

\bibitem{kalt}
B.~Kaltenbacher.
\newblock Mathematics of nonlinear acoustics.
\newblock {\em Evolution Equations and Control Theory}, 4(4):447--491, 2015.

\bibitem{WE}
B.~Kaltenbacher and I.~Lasiecka.
\newblock Global existence and exponential decay rates for the {Westervelt}'s
  equation.
\newblock {\em Discrete and Continuous Dynamical Systems–Series S},
  2(3):503--525, 2009.

\bibitem{mgtp1}
B.~Kaltenbacher, I.~Lasiecka, and R.~Marchand.
\newblock Wellposedness and exponential decay rates for the
  {Moore}-{Gibson}-{Thompson} equation arising in high intensity ultrasound.
\newblock {\em Control and Cybernetics}, 40:971--988, 2011.

\bibitem{jmgt}
B.~Kaltenbacher, I.~Lasiecka, and M.~K. Pospieszalska.
\newblock Well-posedness and exponential decay of the energy in the nonlinear
  {Jordan}–{Moore}–{Gibson}–{Thompson} equation arising in high intensity
  ultrasound.
\newblock {\em Mathematical Models and Methods in Applied Sciences},
  22(11):1250035, 2012.
\newblock Publisher: World Scientific.

\bibitem{kato}
T.~Kato.
\newblock {\em Perturbation {Theory} for {Linear} {Operators}}.
\newblock Springer-Verlag Berlin Heidelberg, 1976.

\bibitem{mech}
W.~M. Lai, D.~H. Rubin, E.~Krempl, and D.~Rubin.
\newblock {\em Introduction to {Continuum} {Mechanics}}.
\newblock Butterworth-Heinemann, 2009.

\bibitem{mem6}
I.~Lasiecka.
\newblock Global solvability of {Moore}–{Gibson}–{Thompson} equation with
  memory arising in nonlinear acoustics.
\newblock {\em Journal of Evolution Equations}, 17(1):411--441, 2017.
\newblock Publisher: Springer.

\bibitem{bible}
I.~Lasiecka and R.~Triggiani.
\newblock {\em Control {Theory} for {Partial} {Differential} {Equations}:
  {Volume} 1, {Abstract} {Parabolic} {Systems}: {Continuous} and
  {Approximation} {Theories}}.
\newblock Cambridge University Press, 2000.

\bibitem{mem2}
I.~Lasiecka and X.~Wang.
\newblock Moore–{Gibson}–{Thompson} equation with memory, part {II}:
  {General} decay of energy.
\newblock {\em Journal of Differential Equations}, 259(12):7610--7635, 2015.
\newblock Publisher: Elsevier.

\bibitem{mem1}
I.~Lasiecka and X.~Wang.
\newblock Moore–{Gibson}–{Thompson} equation with memory, part {I}:
  {Exponential} decay of energy.
\newblock {\em Zeitschrift für angewandte Mathematik und Physik}, 67(2):17,
  2016.
\newblock Publisher: Springer.

\bibitem{lunardi}
A.~Lunardi.
\newblock {\em Analytic {Semigroups} and {Optimal} {Regularity} in {Parabolic}
  {Problems}}.
\newblock Birkhäuser, 1995.

\bibitem{TrigMGT}
R.~Marchand, T.~McDevitt, and R.~Triggiani.
\newblock An abstract semigroup approach to the third-order
  {Moore}–{Gibson}–{Thompson} partial differential equation arising in
  high-intensity ultrasound: structural decomposition, spectral analysis,
  exponential stability.
\newblock {\em Mathematical Methods in the Applied Sciences},
  35(15):1896--1929, 2012.
\newblock Publisher: Wiley Online Library.

\bibitem{mg1}
F.~K. Moore and W.~E. Gibson.
\newblock Propagation of weak disturbances in a gas subject to relaxation
  effects.
\newblock {\em Journal of the Aerospace Sciences}, 27(2):117--127, 1960.

\bibitem{pazy}
A.~Pazy.
\newblock {\em Semigroups of {Linear} {Operators} and {Applications} to
  {Partial} {Differential} {Equations}}.
\newblock Springer, 1983.

\bibitem{stokes}
G.~G. Stokes.
\newblock An examination of the possible effect of the radiation of heat on the
  propagation of sound.
\newblock {\em Philosophical Magazine Series}, 1(4):305--317, 1851.

\bibitem{ap3}
B.~Straughan.
\newblock {\em Heat {Waves}}.
\newblock Springer Science \& Business Media, 2011.

\end{thebibliography}
\end{document}